\newcommand{\mc}{\mathcal}
\newcommand{\mb}{\mathbb}
\newcommand{\E}{\mathbb{E}}
\newcommand{\R}{\mathbb{R}}
\newcommand{\I}{\textbf{1}_}
\newcommand{\ti}{\widetilde}
\newcommand{\ol}{\overline}
\newcommand{\eps}{\varepsilon}
\newcommand{\lo}{\langle}
\newcommand{\ro}{\rangle}
\renewcommand{\a}{\alpha}
\newcommand{\an}{|\a|_1}
\DeclareMathOperator*\sgn{sgn}
\newcommand{\tr}{{\text{\tiny{\textsf{T}}}}}
\theoremstyle{definition}
\newtheorem{defn}{Definition}[section]
\newtheorem{rmk}[defn]{Remark}
\theoremstyle{plain}
\newtheorem{lem}[defn]{Lemma}
\newtheorem{thm}[defn]{Theorem}
\newtheorem{cor}[defn]{Corollary}
\newtheorem{ass}[defn]{Assumption}
\newtheorem{prop}[defn]{Proposition}
\numberwithin{equation}{section}
\begin{document}
\begin{frontmatter}

\title{Quadratic Semimartingale BSDEs under an Exponential Moments Condition}
\runtitle{Quadratic BSDEs under exponential moments}

\author{\fnms{Markus} \snm{Mocha\thanksref{t2}\hspace{-0.3cm}}\ead[label=e2]{mocha@math.hu-berlin.de}}
\address{Markus Mocha\\ Institut f\"{u}r Mathematik\\
Humboldt-Universit\"{a}t zu Berlin \\ Unter den Linden 6, 10099 Berlin \\ Germany\\
\printead{e2}}
\thankstext{t2}{Corresponding author: \texttt{mocha@math.hu-berlin.de}}
\and
\author{\fnms{Nicholas} \snm{Westray}\ead[label=e4]{westray@math.hu-berlin.de}}
\address{Nicholas Westray\\ Institut f\"{u}r Mathematik\\ Humboldt-Universit\"{a}t zu Berlin \\ Unter den Linden 6, 10099 Berlin \\ Germany\\
\printead{e4}}

\affiliation{Humboldt-Universit\"{a}t zu Berlin}

\runauthor{M. Mocha and N. Westray}

\begin{keyword}[class=AMS]
\kwd{60H10}
\end{keyword}
\begin{keyword}
\kwd{Quadratic Semimartingale BSDEs}
\kwd{Convex Generators}
\kwd{Exponential Moments}
\end{keyword}

\begin{abstract}
In the present article we provide existence, uniqueness and stability results under an exponential moments condition for quadratic semimartingale backward stochastic differential equations (BSDEs) having convex generators. We show that the martingale part of the BSDE solution defines a true change of measure and provide an example which demonstrates that pointwise convergence of the drivers is not sufficient to guarantee a stability result within our framework.
\end{abstract}

\end{frontmatter}


\section{Introduction}
Since their introduction by Bismut \cite{Bi73} within the Pontryagin maximum principle, backward stochastic differential equations (BSDEs) have attracted much attention in the mathematical literature. In a Brownian framework such equations are usually written
\begin{equation}\label{BSDEW}
dY_t=Z_t\,dW_t-F(t,Y_t,Z_t)\,dt, \quad Y_T=\xi,
\end{equation}
where $\xi$ is an $\mc{F}_T$-measurable random variable, the \emph{terminal value}, and $F$ is the so called \emph{driver} or \emph{generator}. Here $(\mc{F}_t)_{t\in[0,T]}$ denotes the filtration generated by the one dimensional Brownian motion $W$. Solving such a BSDE corresponds to finding a \emph{pair} of adapted processes $(Y,Z)$ such that the integrated version of \eqref{BSDEW} holds. The presence of the \emph{control process} $Z$ stems from the requirement of adaptedness for $Y$ together with the fact that $Y$ must be driven into the random variable $\xi$ at time $T$. One may think of $Z$ as arising from the martingale representation theorem.

In the general semimartingale framework, where the main source of randomness is encoded in a given local martingale $M$ on a filtration $(\mc{F}_t)_{t\in[0,T]}$ that is not necessarily generated by $M$, we have to add an extra orthogonal component $N$. The corresponding BSDE then takes the form
\begin{equation}\label{BSDEM}
dY_t=Z_t\,dM_t+dN_t-f(t,Y_t,Z_t)\,d\lo M\ro_t-g_t\,d\lo N\ro_t, \quad Y_T=\xi.
\end{equation}
Solving \eqref{BSDEM} now corresponds to finding an adapted \emph{triple} $(Y,Z,N)$ of processes satisfying the integrated version of \eqref{BSDEM}, where $N$ is a local martingale orthogonal to $M$. We refer to $Z\cdot M+N$ as the \emph{martingale part} of a solution to the BSDE \eqref{BSDEM}.

BSDEs of type \eqref{BSDEW} and \eqref{BSDEM} have found many fields of application in mathematical finance. The first problems to be attacked by means of such equations included pricing and hedging, superreplication and recursive utility. The reader is directed to the survey articles El Karoui, Peng and Quenez \cite{EKPQ97} and El Karoui, Hamad{\`e}ne and Matoussi \cite{EKHM09} and the references therein for further discussion. A second large focus has been on their use in constrained utility maximization. In a Brownian setting Hu, Imkeller and M{\"u}ller \cite{HIM05} used the martingale optimality principle to derive a BSDE for the value process characterizing the optimal wealth and investment strategy. Their article can be regarded as an extension of earlier work by Rouge
and El Karoui \cite{REK00} as well as Sekine \cite{Se06}. In related work in a semimartingale setting Mania and Schweizer \cite{MS05} used a BSDE to describe the dynamic indifference
price for exponential utility. Their stochastic control approach was extended to robust utility in Bordigoni, Matoussi and Schweizer \cite{BMS07} and to an infinite time horizon in the recent article by Hu and Schweizer \cite{HS09}. We also mention Becherer \cite{Be06} for further extensions to BSDEs with jumps and Mania and Tevzadze \cite{MT08} to backward stochastic partial differential equations. This list is by far not exhaustive and
additional references can be found in the stated papers.

With regards to the theory of BSDEs, existence and uniqueness results were first provided in a Brownian setting by Pardoux and Peng \cite{PP90} under
Lipschitz conditions. These were extended by Lepeltier and San Mart\'{i}n \cite{LSM97} to continuous drivers with linear growth and by Kobylanski \cite{K00} to generators which are quadratic as a function of the control variable $Z$. The corresponding results for the semimartingale case may be found in Morlais \cite{Mo09} and Tevzadze \cite{Te08}, where in the former the main theorems of \cite{HIM05} are extended. In addition a stability result for quadratic BSDEs may also be found in the recent article by Frei \cite{Fr09}. In the situation when the generator has superquadratic growth, Delbaen, Hu and Bao \cite{DHB10} show that such BSDEs are essentially ill-posed.

A strong requirement present in the articles \cite{K00,Mo09,Te08} is that the terminal condition be bounded. In a Brownian setting Briand and Hu \cite{BH06,BH08} have replaced this by the assumption that it need only have exponential moments but in addition the driver is convex in the $Z$ variable. More recently, by interpreting the $Y$ component as the solution to a stochastic control problem, Delbaen, Hu and Richou \cite{DHR09} extend their results and show that one can reduce the order of exponential moments required.

The present article has two main contributions, the first is to extend the existence, uniqueness and stability theorems of \cite{BH08} and \cite{Mo09} to the unbounded semimartingale case. The motivation here is predominantly mathematical, having results in greater generality increases the range of applications for BSDEs. We remark however, that there are additional practical applications for the results derived here, e.g. related to utility maximization with an unbounded mean variance tradeoff, see Nutz \cite{Nu209} and Mocha and Westray \cite{MW10a}, which provides a second motivation for the present work.

In order to prove the respective results in the unbounded semimartingale framework technical difficulties related to an a priori estimate must be overcome. This requires an additional assumption when compared to \cite{BH08} and \cite{Mo09}. As a biproduct of establishing our results we are able to show via an example that the stability theorem as stated in \cite{BH08} Proposition 7 needs a minor amendment to the mode of convergence assumed on the drivers and we include the appropriate formulation. 

Our second contribution is to address the question of measure change. It is a classical result that when the generator has quadratic growth, the solution processes $Y$ is bounded if and only if the martingale part $Z\cdot M+N$ is a BMO martingale. In the present setting such a correspondence is lost, however we are able to show that whilst $Z\cdot M+N$ need not be BMO, see Frei, Mocha and Westray \cite{FMW11} for further discussion and some examples, the stochastic exponential $\mc{E}\big(q(Z\cdot M+N)\big)$ is still a true martingale for $q$ valid in some half-line. It is not only mathematically interesting to be able to describe the properties of the martingale part of the BSDE but also relevant for applications in an unbounded setting. For instance, the above result has been used in Heyne \cite{He10} to extend the results of \cite{HIM05} and \cite{Mo09} on utility maximization. Moreover such a theorem may be used in the partial equilibrium framework of Horst, Pirvu and dos Reis \cite{HPdR10} where the market price of (external) risk is given by equilibrium considerations and is typically unbounded. 

The paper is organized as follows. In the next section we lay out
the notation and the assumptions and state the main
results. The subsequent sections contain the proofs. Section \ref{SecAPriori} gives the a priori estimates together with some remarks on the necessity of an additional assumption, Section \ref{Exist} deals with existence and Section \ref{Uniq}
includes the comparison and uniqueness results. In Sections 6 and 7 we
prove the stability property as well as providing an interesting counterexample. In Section 8, we turn our attention to
the measure change problem and finally, in Section 9, we give interesting applications of our results to constrained utility maximization and partial equilibrium models.


\section{Model Formulation and Statement of Results}\label{SecModForm}
We work on a filtered probability space $(\Omega,\mc{F},(\mc{F}_t)_{0\leq t\leq T},\mb{P})$ satisfying the usual conditions of right-continuity and completeness. We also assume that $\mc{F}_0$ is the completion of the trivial $\sigma$-algebra. The time horizon $T$ is a finite number in $(0,\infty)$ and all semimartingales are considered equal to their c{\`a}dl{\`a}g modification.

Throughout this paper $M=(M^1,\ldots,M^d)^\tr$ stands for a continuous $d$-dimensional local martingale, where ${}^\tr$ denotes transposition. We refer the reader to Jacod and Shiryaev \cite{JS03} and Protter \cite{Pr05} for further details on the general theory of stochastic integration.

The objects of study in the present paper will be semimartingale BSDEs considered on $[0,T]$. In the $d$-dimensional case such a BSDE may be written
\begin{gather}
dY_t=Z_t^{\tr}\,dM_t+dN_t-\textbf{1}^\tr\,d\lo M\ro_tf(t,Y_t,Z_t)-g_t\,d\lo N\ro_t,\quad Y_T=\xi\label{BSDEf}.
\end{gather}
Here $\xi$ is an $\R$-valued $\mc{F}_T$-measurable random variable and $f$ and $g$ are random predictable functions $[0,T]\times\Omega\times\mb{R}\times\mb{R}^d\to\mb{R}^d$ and $[0,T]\times\Omega\to\mb{R}$, respectively. We set $\textbf{1}:=(1,\ldots,1)^{\tr}\in\mb{R}^d$.

The format in which the BSDE \eqref{BSDEf} encodes its finite variation parts is not so tractable from the point of view of analysis. Therefore we write semimartingale BSDEs by factorizing the matrix-valued process $\lo M\ro=\lo M^i,M^j\ro_{i,j=1,\ldots,d}$. This separates its matrix property from its nature as measure. This step could also be regarded as a reduction of dimensionality.

For $i,j\in\{1,\ldots,d\}$ we may write $\lo M^i,M^j\ro=C^{ij}\cdot A$ where $C^{ij}$ are the components of a predictable process $C$ valued in the space of symmetric positive semidefinite $d\times d$ matrices and $A$ is a predictable increasing process. There are many such factorizations (cf. \cite{JS03} Section III.4a). We may choose $A:=\arctan\!\left(\sum_{i=1}^d\big\lo M^i\big\ro\right)$ so that $A$ is uniformly bounded by $K_A=\pi/2$ and derive the absolute continuity of all the $\lo M^i,M^j\ro$ with respect to $A$ from the Kunita-Watanabe inequality. This together with the Radon-Nikod{\'y}m theorem provides $C$. Furthermore, we can factorize $C$ as $C=B^{\tr}B$ for a predictable process $B$ valued in the space of $d\times d$ matrices. We note that all the results below do not rely on the specific choice of $A$, but only on its \emph{boundedness}. In particular, if $M=W$ is a $d$-dimensional Brownian motion we may choose $A_t=t$, $t\in[0,T]$, and $B$ the identity matrix. Then $A$ is bounded by $K_A=T.$

We let $\mc{P}$ denote the predictable $\sigma$-algebra on $[0,T]\times \Omega$ generated by all the left-continuous processes. The process $A$ induces a measure $\mu^A$ on $\mc{P}$, the \emph{Dol{\'e}ans measure}, defined for $E\in\mc{P}$ by \[\mu^A(E):=\E\!\left[\int_0^T\I{E}(t)\,dA_t\right].\]

Given the above discussion the equation \eqref{BSDEf} may be rewritten as
\begin{gather}
dY_t=Z_t^{\tr}\,dM_t+dN_t-F(t,Y_t,Z_t)\,dA_t-g_t\,d\lo N\ro_t,\label{BSDEF-}\quad Y_T=\xi,
\end{gather}
where again $\xi$ is an $\R$-valued $\mc{F}_T$-measurable random variable, the \emph{terminal condition}, and $F$ and $g$ are random predictable functions $[0,T]\times\Omega\times\mb{R}\times\mb{R}^d\to\mb{R}$ and $[0,T]\times\Omega\to\mb{R}$ respectively, called \emph{generators} or \emph{drivers}. This formulation of the BSDE is very flexible, allowing for various applications and being amenable to analysis.

Starting with \eqref{BSDEf} and setting $F(t,y,z):=\textbf{1}^{\tr}C_tf(t,y,z)=\textbf{1}^{\tr}B^{\tr}_tB_tf(t,y,z)$ we get \eqref{BSDEF-}. The reversion of this procedure is not so clear, however is not relevant in applications.

Under boundedness assumptions, existence of solutions to
\eqref{BSDEF-} is provided in \cite{Mo09} via an exponential
transformation that makes the $d\lo N\ro$ term disappear. A
necessary condition for this kind of transformation to work
properly is $dg=0$. In the sequel we thus consider the above BSDE
to be given in the form
\begin{gather}
dY_t=Z_t^{\tr}\,dM_t+dN_t-F(t,Y_t,Z_t)\,dA_t-\frac{1}{2}\,d\lo N\ro_t,\quad \label{BSDEF} Y_T=\xi,
\end{gather}
except in specific situations where a solution is assumed to exist.
\begin{defn}
A \emph{solution} to the BSDE \eqref{BSDEF-}, or \eqref{BSDEF}, is a
triple $(Y,Z,N)$ of processes valued in $\R\times\R^d\times\R$
satisfying \eqref{BSDEF-}, or \eqref{BSDEF}, $\mb{P}$-a.s. such that:
\begin{enumerate}
\item The function $t\mapsto Y_t$ is continuous $\mb{P}$-a.s.
\item The process $Z$ is predictable and $M$-integrable, in particular $\int_0^T Z^\tr_t\,d\lo M\ro_tZ_t<+\infty$ $\mathbb{P}$-a.s.
\item The local martingale $N$ is continuous and orthogonal to each component of $M$, i.e. $\lo N,M^i \ro =0$ for all $i=1,\ldots,d$.
\item We have that $\mb{P}$-a.s.\[\int_0^T|F(t,Y_t,Z_t)|\,dA_t+\lo N\ro_T<+\infty.\]
\end{enumerate}
As in the introduction we call $Z\cdot M+N$ the \emph{martingale part} of a solution. 
\end{defn}

In what follows we collect together the assumptions that allow for all the assertions of this paper to hold simultaneously. However we want to point out that not all of our results require that every item of Assumption \ref{ass} be satisfied, as will be indicated in appropriate remarks.

\begin{ass}\label{ass}
There exist nonnegative constants $\beta$ and $\ol{\beta}$, positive numbers $\beta_f$ and $\gamma\geq \max(1,\beta)$ together with an $M$-integrable (predictable) $\mb{R}^d$-valued process $\lambda$ so that writing
\begin{equation*}
\a:=\|B\lambda\|^2 \text{ and } \,|\a|_1:=\int_0^T\a_t\,dA_t=\int_0^T\lambda^\tr_t\,d\lo M\ro_t\lambda_t
\end{equation*}
we have $\mathbb{P}$-a.s.
\begin{enumerate}
\item The random variable $|\xi|+|\a|_1$ has exponential moments of all orders, i.e.
for all $p>1$
\begin{equation}
\E\!\left[\exp\!\Big(p\,\big[|\xi|+|\a|_1\big]\Big)\right]<+\infty.\label{AssExpMom}
\end{equation}
\item For all $t\in[0,T]$ the driver $(y,z)\mapsto F(t,y,z)$ is continuous in $(y,z)$, convex in $z$ and Lipschitz continuous in $y$ with Lipschitz constant $\ol{\beta}$, i.e. for all $y_1$, $y_2$ and $z$ we have
\begin{equation}
|F(t,y_1,z)-F(t,y_2,z)|\leq \ol{\beta}\,|y_1-y_2|.\label{AssLipF}
\end{equation}
\item The generator $F$ satisfies a quadratic growth condition in $z$, i.e. for all $t, y$ and $z$ we have
\begin{equation}
|F(t,y,z)|\leq \alpha_t+\a_t \beta|y|+\frac{\gamma}{2}\|B_tz\|^2.\label{AssGrowF}
\end{equation}
\item The function $F$ is locally Lipschitz in $z$, i.e. for all $t, y, z_1$ and $z_2$
\begin{equation*}
|F(t,y,z_1)-F(t,y,z_2)|\leq \beta_f \Big(\|B_t\lambda_t\| +\|B_tz_1\|+\|B_tz_2\|\Big)\|B_t(z_1-z_2)\|.\label{AssLocLipF}
\end{equation*}
\item The constant $\beta$ in (iii) equals zero and then we set $c_A:=0.$ Alternatively, $\beta>0$, but additionally assume that for all $y$ and $z$ we have
\begin{equation*}
|F(t,y,z)-F(t,0,z)|\leq \ol{\beta}\,|y|
\end{equation*}
and that there is a positive constant $c_A$ such that $A_t\leq c_A\cdot t$ for all $t\in[0,T]$.
\end{enumerate}
If this assumption is satisfied we refer to \eqref{BSDEF} as BSDE$(F,\xi)$ with the set of parameters $(\a,\beta,\ol{\beta},\beta_f,\gamma)$. 
\end{ass}

\begin{rmk}
The above items (i)-(iv) correspond to the assumptions made in \cite{BH08} and \cite{Mo09}. In particular, the BSDEs under consideration are of \emph{quadratic type} (in the control variable $z$) and of Lipschitz type in $y$. Item (v) is new and arises from the fact that the methods used in \cite{Mo09} to derive an a priori estimate may no longer be directly applied so that an additional assumption is required. We elaborate further on this topic in Section \ref{SecAPriori}. Observe that in the key application of utility maximization, cf. \cite{MW10a}, the associated driver is independent of $y$ and hence $\beta=0$ applies. 
\end{rmk}

Notice that items (ii) and (iii) from above provide
\begin{equation}
|F(t,y,z)|\leq \a_t+\ol{\beta}|y|+\frac{\gamma}{2}\,\|B_tz\|^2,\label{Ftyzbetas}
\end{equation}
for all $t$, $y$ and $z$, $\mb{P}$-a.s. This is an inequality which does not involve $\a$ in the $|y|$ term on the right hand side and which is used repeatedly throughout the proofs. We also define the constant
\[\beta^*:=c_A\cdot\ol{\beta}.\]

Before giving the main results of the paper let us introduce some notation. For $p\geq1$, $\mc{S}^p$ denotes the set of $\R$-valued, adapted and continuous processes $Y$ on $[0,T]$ such that \[\E\!\left[\sup_{\substack{0\leq t\leq T} }|Y_t|^p\right]^{1/p}<+\infty.\] The space $\mc{S}^\infty$ consists of the continuous bounded processes. An $\R$-valued, adapted and continuous process $Y$ belongs to $\mathfrak{E}$ if the random variable \[Y^*:=\sup_{\substack{t\in[0,T]}}|Y_t|\] has exponential moments of all orders. We also recall that $Y$ is called \emph{of class D} if the family $\{Y_\tau|\,\tau\in[0,T]\text{ stopping time}\}$ is uniformly integrable. The set of (equivalence classes of) $\mb{R}^d$-valued predictable processes $Z$ on $[0,T]\times \Omega$ satisfying
\[\E\!\left[\!\left(\int_0^TZ_t^{\tr}\,d\lo M\ro_tZ_t\right)^{p/2}\right]^{1/p}<+\infty\]
is denoted by $\mathfrak{M}^p$. Finally, $\mc{M}^p$ stands for the set of $\R$-valued martingales $N$ on $[0,T]$, such that
\[\|N\|_{\mc{M}^p}:=\E\!\left[\lo N\ro_T^{p/2}\right]^{1/p}<+\infty.\]
Notice that if the following assumption on the filtration holds the elements of $\mc{M}^p$ are continuous.
\begin{ass}\label{ass_filtr}
The filtration $(\mc{F}_t)_{t\in[0,T]}$ is a continuous filtration, in the sense that all local $(\mc{F}_t)_{t\in[0,T]}$-martingales are continuous.
\end{ass}

\noindent The following four theorems constitute the main results of the paper. We mention that only the existence result requires the assumption of the continuity of the filtration. 

\begin{thm}[Existence]\label{ThmExist}
If Assumptions \ref{ass} and \ref{ass_filtr} hold there exists a solution $(Y,Z,N)$ to the BSDE \eqref{BSDEF} such that $Y\in\mathfrak{E}$ and $Z\cdot M+N\in\mc{M}^p$ for all $p\geq1$.
\end{thm}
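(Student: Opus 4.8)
The plan is to construct the solution by approximation, following the strategy of Briand--Hu \cite{BH08} adapted to the semimartingale setting of Morlais \cite{Mo09}. First I would truncate the terminal condition and the driver so as to land in the bounded framework of \cite{Mo09}: set $\xi^n := (\xi\wedge n)\vee(-n)$ and replace $F$ by a suitable truncation $F^n$ (for instance truncating the $|y|$-growth so that the truncated driver still satisfies the quadratic growth and convexity/Lipschitz conditions with the same constants, and with $F^n \to F$ locally uniformly). Under Assumptions \ref{ass} and \ref{ass_filtr}, the results of \cite{Mo09} (via the exponential transformation that removes the $\tfrac12 d\langle N\rangle$ term) yield a bounded solution $(Y^n, Z^n, N^n)$ with $Z^n\cdot M + N^n$ a BMO martingale.

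The core of the argument is then a uniform a priori estimate: using the exponential transform $\exp(\gamma e^{\beta^* t}|Y^n_t| + $ (absolutely continuous correction involving $|\a|_1$)$)$ together with Assumption \ref{ass}(v), one shows
\[
\exp\!\big(\gamma|Y^n_t|\big) \le \E\!\left[\exp\!\Big(\gamma e^{\beta^*(T-t)}|\xi| + \gamma'\,|\a|_1\Big)\,\Big|\,\mc{F}_t\right]
\]
for an appropriate constant $\gamma'$, whence $Y^n\in\mathfrak{E}$ with exponential-moment bounds uniform in $n$ — this is exactly where the new hypothesis (v) (the bound $A_t\le c_A t$ and the decomposition of $F$ around $y=0$) is needed, since the Gronwall-type step in \cite{Mo09} no longer closes directly when $A$ is a general increasing process. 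From the uniform bound on $Y^n$ one extracts, via the BSDE itself and the quadratic growth \eqref{AssGrowF}, uniform bounds on $\E[(\int_0^T (Z^n)^\tr d\langle M\rangle Z^n)^{p/2}]$ and on $\|N^n\|_{\mc{M}^p}$ for every $p\ge1$ (the standard trick: apply It\^o to $|Y^n|^2$ or to an exponential of it, use the quadratic term to dominate, and absorb using the exponential integrability of $Y^{n,*}$ and $|\a|_1$ together with energy/John--Nirenberg type inequalities).

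Next I would pass to the limit. Monotone or dominated convergence (after possibly localizing and using a diagonal/monotone sequence in $n$ as in \cite{BH08}) gives $Y^n\to Y$ with $Y\in\mathfrak{E}$; the uniform $\mc{M}^p$-bounds give weak limits $Z$, $N$ in $\mathfrak{M}^p$ and $\mc{M}^p$, and one upgrades to the strong convergence of $Z^n\cdot M + N^n$ needed to identify the stochastic integrals, typically by showing the sequence is Cauchy in $\mc{M}^2$ (or some $\mc{M}^p$) on using convexity of $F$ in $z$ — the $\theta$-difference technique of \cite{BH08}, whereby one estimates $Y^n - \theta Y^m$ for $\theta\in(0,1)$ and lets $\theta\uparrow1$, exploiting convexity to control the quadratic driver term. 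Having strong convergence of the martingale parts, one identifies the limit of the driver terms $\int_0^\cdot F^n(t,Y^n_t,Z^n_t)\,dA_t$ using the local Lipschitz property \eqref{AssLocLipF} in $z$ and continuity in $(y,z)$, and verifies that $(Y,Z,N)$ satisfies Definition \ref{ass}(i)--(iv) and solves \eqref{BSDEF}, with the orthogonality $\langle N, M^i\rangle = 0$ preserved under $\mc{M}^2$-limits and continuity of $N$ coming from Assumption \ref{ass_filtr}.

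The main obstacle, and the genuinely new point compared to \cite{Mo09,BH08}, is the uniform a priori estimate in $\mathfrak{E}$: controlling $\gamma e^{\beta^* t}|Y^n_t|$ when $A$ is a general bounded increasing (not absolutely continuous in $t$) process forces the extra structural hypothesis (v), and getting the exact exponents so that \eqref{AssExpMom} for all $p>1$ suffices (rather than for some restricted range) requires care with the constants in the Gronwall/comparison step and in the subsequent John--Nirenberg estimates for the BMO norms of $Z^n\cdot M + N^n$. A secondary technical difficulty is the simultaneous weak-to-strong upgrade for $Z^n$ and $N^n$ in the presence of the orthogonal component $N$, which the convexity-based $\theta$-trick handles but needs to be set up carefully so that the estimates are uniform in the truncation level.
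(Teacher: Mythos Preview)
Your overall architecture is recognizably in the spirit of \cite{BH08}, but there is a concrete gap and a genuine divergence from the paper's route.

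\textbf{The gap.} Your proposed truncation does not land in Morlais's bounded framework. Truncating $\xi$ and ``the $|y|$-growth'' of $F$ leaves the process $\alpha=\|B\lambda\|^2$ untouched, and Morlais's existence theorem requires $|\alpha|_1=\int_0^T\lambda_t^\tr\,d\langle M\rangle_t\lambda_t$ to be bounded. The paper fixes this by truncating in \emph{time} via the stopping times $\sigma_n:=\inf\{t:\langle\lambda\cdot M\rangle_t\ge n\}\wedge T$ and setting $F^n:=\mathbf{1}_{\{t\le\sigma_n\}}F$, $\lambda^n:=\mathbf{1}_{\{t\le\sigma_n\}}\lambda$, $\xi^n:=\xi\wedge n$ (after first reducing to nonnegative $F,\xi$; the general case is handled by a double truncation of the positive and negative parts). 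This yields $|\alpha^n|_1\le n$ and preserves all structural assumptions, so \cite{Mo09} applies.

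\textbf{A different passage to the limit.} You propose weak limits together with the convexity-based $\theta$-technique to upgrade to strong convergence of the martingale parts. The paper does something cleaner and avoids both weak compactness and the $\theta$-trick entirely at this stage: once the uniform a priori bound $|Y^n_t|\le X_t$ with $X$ the logarithm of a martingale is in hand, one \emph{localizes again} via $\tau_k:=\inf\{t:X_t\ge k\text{ or }\langle\lambda\cdot M\rangle_t\ge k\}\wedge T$. On $[\![0,\tau_k]\!]$ the stopped approximations $Y^{n,k}$ are \emph{uniformly bounded by $k$}, so Morlais's bounded-case stability lemma gives strong convergence of $(Y^{n,k},Z^{n,k},N^{n,k})$ as $n\to\infty$ directly; the monotonicity $Y^n\uparrow$ (from Morlais's comparison theorem) identifies the limit terminal datum. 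One then glues across $k$, using $\tau_k=T$ eventually a.s. A consequence worth noting: the paper's existence proof uses only continuity of $F$ in $(y,z)$, not convexity (see the remark closing Section~\ref{Exist}); your route, by contrast, would make convexity essential already for existence.

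\textbf{Minor point.} The $\mc{M}^p$ bounds on $Z\cdot M+N$ are not obtained via BMO or John--Nirenberg arguments (there is no BMO structure here since $Y$ is unbounded). The paper applies It\^o's formula to $v(Y)$ with $v(x)=\gamma^{-2}(e^{\gamma|x|}-1-\gamma|x|)$, uses the identity $\gamma u'-u''=-1$ to isolate the quadratic variation, and closes with Burkholder--Davis--Gundy and the exponential integrability of $Y^*$ and $|\alpha|_1$.
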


\begin{thm}[Uniqueness]\label{ThmUniq}
Suppose that Assumption \ref{ass} holds. Then any two solutions $(Y,Z,N)$ and $(Y',Z',N')$ in $\mathfrak{E}\times\mathfrak{M}^2\times\mc{M}^2$ to the BSDE \eqref{BSDEF} coincide in the sense that $Y$ and $Y'$, $Z\cdot M$ and $Z'\cdot M$, and $N$ and $N'$ are indistinguishable.
\end{thm}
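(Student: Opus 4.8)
The plan is to follow the standard linearization strategy for quadratic convex BSDEs, adapted to the unbounded semimartingale setting. Given two solutions $(Y,Z,N)$ and $(Y',Z',N')$ in $\mathfrak{E}\times\mathfrak{M}^2\times\mc{M}^2$, set $\delta Y:=Y-Y'$, $\delta Z:=Z-Z'$, $\delta N:=N-N'$. First I would write the BSDE satisfied by $\delta Y$:
\begin{equation*}
d(\delta Y_t)=\delta Z_t^{\tr}\,dM_t+d(\delta N_t)-\big[F(t,Y_t,Z_t)-F(t,Y'_t,Z'_t)\big]\,dA_t-\tfrac12\,d\lo N\ro_t+\tfrac12\,d\lo N'\ro_t,\quad \delta Y_T=0.
\end{equation*}
The two $d\lo N\ro$ terms combine as $-\tfrac12\,d\lo \delta N\ro_t-d\lo N',\delta N\ro_t$, so the $\delta N$-dynamics carry a linear drift in $d\lo N',\delta N\ro$ plus a quadratic $-\tfrac12 d\lo\delta N\ro$ term, exactly as in \cite{Mo09}. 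The convexity of $F$ in $z$ together with the local Lipschitz bound (iv) lets me linearize the generator difference: write
\begin{equation*}
F(t,Y_t,Z_t)-F(t,Y'_t,Z'_t)=\big[F(t,Y_t,Z_t)-F(t,Y_t,Z'_t)\big]+\big[F(t,Y_t,Z'_t)-F(t,Y'_t,Z'_t)\big],
\end{equation*}
bound the second bracket by $\ol\beta|\delta Y_t|$ using (ii), and for the first bracket use a subgradient of the convex map $z\mapsto F(t,Y_t,z)$ to get $F(t,Y_t,Z_t)-F(t,Y_t,Z'_t)=\langle u_t,\delta Z_t\rangle$ for a predictable process $u$; assumption (iv) then gives the pointwise estimate $\|B_t^{\tr}u_t\|\le \beta_f(\|B_t\lambda_t\|+\|B_tZ_t\|+\|B_tZ'_t\|)$, so $u\cdot M$ is a BMO-like correction but only with the integrability coming from $\mathfrak{M}^2$.

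Next I would perform the Girsanov-type change of measure. Define $\widetilde M:=M-\langle M\rangle u$ (so that $u\cdot M$ becomes a drift under the new measure) and $\widetilde N:=\delta N+\langle N'+\tfrac12\delta N,\cdot\rangle$-type compensation so that under the candidate measure $\mb Q$ with density $\mc E\big(u\cdot M+(N'+\tfrac12\delta N)\big)$, the process $\delta Y$ becomes a local martingale plus the $\ol\beta|\delta Y|$-controlled drift. After also absorbing the $\ol\beta|\delta Y_t|$ term via a deterministic exponential weight $e^{\ol\beta t}$ (or a sign-process as in the comparison theorem), one obtains that $e^{\ol\beta t}\delta Y_t$ is a $\mb Q$-local martingale with terminal value $0$; if it is additionally of class D under $\mb Q$ one concludes $\delta Y\equiv 0$, and then $\delta Z\cdot M+\delta N$ is a martingale null at $T$, hence identically zero, and orthogonality separates $\delta Z\cdot M$ from $\delta N$. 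I expect this uniqueness proof to reuse the comparison result of Section \ref{Uniq} directly: apply comparison twice (with $\xi=\xi'$ and $F=F'$) to get $Y\le Y'$ and $Y'\le Y$, which is cleaner than re-deriving the measure change here.

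The main obstacle is justifying the change of measure and the class-D / true-martingale property in the \emph{unbounded} setting: since $Y$ is only in $\mathfrak{E}$ and $Z\cdot M+N$ only in $\mc M^p$ (not BMO), the stochastic exponential $\mc E\big(u\cdot M+(N'+\tfrac12\delta N)\big)$ need not be a uniformly integrable martingale a priori, and one cannot invoke the bounded-terminal-value machinery of \cite{Mo09} or \cite{K00}. Here I would lean on the exponential-moments hypotheses in Assumption \ref{ass}(i) and the $\mathfrak{E}$, $\mathcal M^p$ integrability of the solutions — exactly the estimates developed in the a priori section and in the measure-change result of Section 8 of the paper — to show that the relevant densities and the weighted process $e^{\ol\beta\cdot}\delta Y$ lie in enough $L^p$ spaces (uniformly along a localizing sequence) for the localization to pass to the limit. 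Controlling $\int_0^T(\|B_t Z_t\|^2+\|B_tZ'_t\|^2)\,dA_t$ and the brackets $\lo N\ro_T,\lo N'\ro_T$ through $\mathfrak{M}^2\times\mc M^2$, combined with De la Vallée-Poussin and the exponential moments of $\an$, is what makes the argument close; this is the step where the hypotheses on the solution spaces are genuinely used, and getting the exponents to match is the delicate bookkeeping.
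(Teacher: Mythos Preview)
Your closing suggestion—apply the comparison theorem twice with identical data to obtain $Y\le Y'$ and $Y'\le Y$, and then recover $\delta Z\cdot M\equiv0$, $\delta N\equiv0$ from It\^o's formula applied to $(Y-Y')^2$—is precisely the paper's argument (Corollary~\ref{ExistUniq}). That part is correct and matches the paper exactly.

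The linearization/Girsanov route you develop first, however, is \emph{not} how the paper proves the underlying comparison result, and it is worth seeing why. Theorem~\ref{comp} uses the $\theta$-technique of Da~Lio--Ley and Briand--Hu: for $\theta\in(0,1)$ one sets $U:=Y-\theta Y'$, exploits convexity via
\[
F(\cdot,Y',Z)-\theta F(\cdot,Y',Z')\;\le\;(1-\theta)\,F\!\left(\cdot,Y',\frac{Z-\theta Z'}{1-\theta}\right)
\]
together with the growth bound~\eqref{Ftyzbetas}, and shows that $\exp(\kappa e^{R}U)$, multiplied by a suitable increasing process, is a local submartingale; sending $\theta\uparrow1$ gives $Y\le Y'$. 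No change of measure is ever performed, no subgradient is selected, and only Assumption~\ref{ass}(i)--(iii) are used. This is the whole point: the $\theta$-technique sidesteps exactly the obstacle you correctly flag, namely that in the unbounded setting $Z\cdot M+N$ is not BMO and your candidate density $\mc{E}\bigl(u\cdot M+\tfrac12(N+N')\bigr)$ is not covered by Theorem~\ref{TrueMart2} (which treats $\mc{E}\bigl(q(Z\cdot M+N)\bigr)$ for a \emph{single} solution) nor by any other estimate in the paper; closing that gap would essentially require a separate argument of comparable length. A minor further slip: subgradients of a convex function give an inequality, not the equality $F(t,Y_t,Z_t)-F(t,Y_t,Z'_t)=\langle u_t,\delta Z_t\rangle$ you wrote, so even formally your linearization is one-sided and you would have to run it twice with swapped roles.
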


\begin{thm}[Stability]\label{ThmStab}
Consider a family of BSDEs($F^n,\xi^n$) indexed by the extended natural numbers $n\geq0$ for which Assumption
\ref{ass} holds true with parameters $(\a^n,\beta^n,\ol{\beta},\beta_f,\gamma)$. Assume that the
exponential moments assumption \eqref{AssExpMom} holds uniformly
in $n$, i.e. for all $p>1$,
\begin{equation*}
\sup_{\substack{n\geq 0}}\E\!\left[e^{p\,(|\xi^n|+|\a^n|_1)}\right]<+\infty.
\end{equation*}
If for $n\geq0$ $(Y^n,Z^n,N^n)$ is the solution in $\mathfrak{E}\times\mathfrak{M}^2\times\mc{M}^2$ to the BSDE($F^n,\xi^n$) and if
\begin{equation}
|\xi^n-\xi^0| + \int_0^T\big|F^n-F^0\big|\,(s,Y_s^0,Z_s^0)\,dA_s\longrightarrow 0 \quad\text{ in
probability, as }n\to+\infty,
\end{equation}
then for each $p\geq1$ as $n\to+\infty$
\begin{gather*}
\E\Bigg[\!\!\left(\exp\!\left(\sup_{\substack{0\leq t\leq T}}\big|Y_t^n-Y_t^0\big|\right)\!\right)^p\Bigg]\longrightarrow 1\quad\text{and}\quad Z^n\cdot M+N^n\longrightarrow Z^0\cdot M+N^0 \text{ in } \mc{M}^p.
\end{gather*}
\end{thm}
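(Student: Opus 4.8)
\emph{Proof strategy.} The plan is to transport the $\theta$-difference technique of \cite{BH08} — which converts convexity of the generators in $z$ into an a priori comparison — to the unbounded semimartingale setting, the a priori estimates of Section~\ref{SecAPriori} furnishing the uniform controls needed to pass to the limit. First I would record that, as $|\xi^n|+|\a^n|_1$ has exponential moments of all orders uniformly in $n$, those estimates give $\sup_{n}\E[\exp(p(Y^n)^*)]<+\infty$ and $\sup_{n}\|Z^n\cdot M+N^n\|_{\mc M^p}<+\infty$ for every $p\geq1$. Writing $\mathbf M^n:=Z^n\cdot M+N^n$ and using orthogonality of $N^n$ to $M$ (so $\lo N^n\ro_T\leq\lo\mathbf M^n\ro_T$), the families $\{(Y^n)^*\}_{n}$, $\{|\a^n|_1\}_{n}$, $\{\lo\mathbf M^n\ro_T\}_{n}$ and their powers and products are bounded in every $L^q$, hence uniformly integrable — this is exactly what will legitimize the limit interchanges below.

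\emph{Upper estimate on $Y^n-Y^0$.} Fix $\theta\in(0,1)$ and set $U^\theta:=Y^n-\theta Y^0$, which has terminal value $\xi^n-\theta\xi^0$ and martingale part $(Z^n-\theta Z^0)\cdot M+(N^n-\theta N^0)$. Since $Z^n=\theta Z^0+(1-\theta)\tfrac{Z^n-\theta Z^0}{1-\theta}$, the convexity of $F^n$ in $z$ followed by \eqref{AssLipF} and \eqref{Ftyzbetas} yields
\begin{align*}
F^n(t,Y^n_t,Z^n_t)-\theta F^0(t,Y^0_t,Z^0_t)&\leq\theta\,\ol\beta\,|Y^n_t-Y^0_t|+\theta\,\big|F^n-F^0\big|(t,Y^0_t,Z^0_t)\\
&\quad+(1-\theta)\big(\a^n_t+\ol\beta|Y^n_t|\big)+\frac{\gamma}{2(1-\theta)}\big\|B_t(Z^n_t-\theta Z^0_t)\big\|^2.
\end{align*}
Applying It\^o's formula to $\exp\!\big(\tfrac{\gamma}{1-\theta}U^\theta\big)$ — the factor $\tfrac{\gamma}{1-\theta}$ being chosen precisely to absorb the quadratic term — and combining a Kunita--Watanabe estimate on $\lo N^n,N^0\ro$ with $\gamma\geq1$ to see that the residual $d\lo N^n\ro$, $d\lo N^0\ro$ drift contributions are nonnegative (this step, which has no analogue in the Brownian proof of \cite{BH08}, is the technical crux, and is where the exact coefficient $\tfrac12$ in \eqref{BSDEF} and the orthogonality of $N$ are essential), one finds that $\exp\!\big(\tfrac{\gamma}{1-\theta}U^\theta\big)+\Gamma^{n,\theta}$ is, for $\theta$ fixed, a local submartingale of class D, with $\Gamma^{n,\theta}$ the nondecreasing process carrying the three remaining drift terms. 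Taking conditional expectations, using $\tfrac{\gamma}{1-\theta}U^\theta_t=\gamma Y^0_t+\tfrac{\gamma}{1-\theta}(Y^n_t-Y^0_t)$ and passing to logarithms then gives
\[
Y^n_t-Y^0_t\leq\frac{1-\theta}{\gamma}\Big(\ln\E\big[\,\Xi^{n,\theta}\,\big|\,\mc{F}_t\big]-\gamma Y^0_t\Big),
\]
where $\Xi^{n,\theta}\geq0$ involves only $\xi^n-\theta\xi^0$, the driver difference $\int_0^T|F^n-F^0|(s,Y^0_s,Z^0_s)\,dA_s$, the $O(1-\theta)$ term $\int_0^T(\a^n_s+\ol\beta|Y^n_s|)\,dA_s$, and a $\ol\beta|Y^n-Y^0|$ contribution which is closed off by a Gronwall argument exploiting the boundedness of $A$. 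Sending first $n\to\infty$ (the terminal and driver differences tend to $0$ in probability, and the exponentials are uniformly integrable) and then $\theta\to1$ gives $\sup_t(Y^n_t-Y^0_t)^+\to0$ in probability.

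\emph{Lower estimate and conclusion for $Y$.} Running the same scheme with $V^\theta:=Y^0-\theta Y^n$ and convexity of $F^0$ in $z$ yields the reverse bound; the one genuinely new point is that comparing $F^0(t,Y^0_t,Z^n_t)$ with $F^0(t,Y^0_t,Z^0_t)$ is no longer free, costing — by the local Lipschitz property, Assumption~\ref{ass}\,(iv) — at most $\beta_f\big(\|B_t\lambda^n_t\|+\|B_t\lambda^0_t\|+\|B_tZ^0_t\|+\|B_tZ^n_t\|\big)\|B_t(Z^n_t-Z^0_t)\|$, and writing $\|B(Z^n-Z^0)\|\leq\|B(Z^0-\theta Z^n)\|+(1-\theta)\|BZ^n\|$ splits this into a part absorbed into the available quadratic term by Young's inequality (at the price of a larger constant in the exponential transform) and an $O(1-\theta)$ remainder, both of which integrate in $dA$ to quantities bounded uniformly in every $L^q$ by the uniform bounds recorded above. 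It is precisely this need to reduce everything to the driver difference along the \emph{limiting} solution $(Y^0,Z^0)$, and not along $(Y^n,Z^n)$, that forces the integrated mode of convergence assumed in the theorem; pointwise convergence of the $F^n$ alone does not suffice, as the subsequent counterexample demonstrates. Combining the two bounds, $\sup_t|Y^n_t-Y^0_t|\to0$ in probability, and since $\{\exp(p(Y^n)^*)\exp(p(Y^0)^*)\}_n$ is uniformly integrable, $\E[\exp(p\sup_t|Y^n_t-Y^0_t|)]\to1$ for every $p\geq1$.

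\emph{Martingale parts.} Finally, applying It\^o's formula to $(Y^n_t-Y^0_t)^2$ and using orthogonality expresses $\lo\mathbf M^n-\mathbf M^0\ro_T$ as the sum of $(\xi^n-\xi^0)^2$, the deterministic term $-(Y^n_0-Y^0_0)^2$, a stochastic integral of zero mean, and $R^n:=\int_0^T 2(Y^n_s-Y^0_s)[F^n(s,Y^n_s,Z^n_s)-F^0(s,Y^0_s,Z^0_s)]\,dA_s+\int_0^T(Y^n_s-Y^0_s)(d\lo N^n\ro_s-d\lo N^0\ro_s)$. Bounding the $dA$-variation of $F^n(\cdot,Y^n,Z^n)-F^0(\cdot,Y^0,Z^0)$ by $|\a^n|_1+|\a^0|_1+\ol\beta K_A\big((Y^n)^*+(Y^0)^*\big)+\tfrac\gamma2\big(\lo\mathbf M^n\ro_T+\lo\mathbf M^0\ro_T\big)$ via \eqref{Ftyzbetas}, one gets $|R^n|\leq\sup_t|Y^n_t-Y^0_t|\cdot Q_n$ with $\{Q_n\}_n$ bounded in every $L^q$, so Cauchy--Schwarz together with $\sup_t|Y^n_t-Y^0_t|\to0$ in every $L^q$ gives $\E[|R^n|^{p/2}]\to0$; the stochastic integral term is controlled by the Burkholder--Davis--Gundy and Cauchy--Schwarz inequalities together with the uniform $\mc M^p$ bound, whence $\E[\lo\mathbf M^n-\mathbf M^0\ro_T^{p/2}]\to0$, i.e.\ $\mathbf M^n\to\mathbf M^0$ in $\mc M^p$ for every $p\geq1$. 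I expect the main obstacles to be, in order, the handling of the orthogonal martingale component and the quadratic $d\lo N\ro$ terms inside the $\theta$-linearization, then the asymmetric lower estimate (which pins down the right mode of driver convergence), and finally the careful joint passage $n\to\infty$, $\theta\to1$, all of which lean on the uniform exponential moments.
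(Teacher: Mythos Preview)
Your proposal is correct and follows the paper's overall strategy: reduce to convergence in probability via uniform exponential bounds and Vitali, prove two one-sided $\theta$-estimates modelled on the comparison argument, then recover the martingale part by It\^o's formula on $(Y^n-Y^0)^2$. The one substantive divergence is in the \emph{lower} estimate. You use convexity of $F^0$ in $z$, which forces you to compare $F^0(t,Y^0,Z^n)$ with $F^0(t,Y^0,Z^0)$ (and similarly for $F^n$), and you close this via the local Lipschitz property, Assumption~\ref{ass}\,(iv), together with a Young-inequality split. The paper instead reuses the convexity of $F^n$ in \emph{both} directions: for $V^\theta=Y^0-\theta Y^n$ it redefines the auxiliary ratio as $\rho_s:=\big(F^n(s,Y^0_s,Z^n_s)-F^n(s,Y^n_s,Z^n_s)\big)/(Y^0_s-Y^n_s)$, so that the decomposition $F^0(\cdot,Y^0,Z^0)-\theta F^n(\cdot,Y^n,Z^n)=-\Delta^nF+\big[F^n(\cdot,Y^0,Z^0)-\theta F^n(\cdot,Y^0,Z^n)\big]+\big[\theta F^n(\cdot,Y^0,Z^n)-\theta F^n(\cdot,Y^n,Z^n)\big]$ is handled by convexity and the $y$-Lipschitz property of $F^n$ alone. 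This yields the same bound as in Step~1 without ever invoking (iv), and in fact the paper's Theorem~\ref{thmstab} requires only Assumption~\ref{ass}\,(ii)--(iii) and (v). A second, smaller difference: the paper absorbs the $\ol\beta|Y^n-Y^0|$ term not by an a posteriori Gronwall argument but by building the integrating factor into the exponential from the outset, applying It\^o to $P=\exp(\kappa e^{R}U)$ with $R=\int_0^\cdot\rho_s\,dA_s$ and $\kappa=\gamma e^{\ol\beta K_A}/(1-\theta)$; this is the same idea, made explicit.
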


\begin{thm}[Exponential Martingales]\label{TrueMart}
Suppose that Assumption \ref{ass} holds, let $|q|>\gamma/2$ and let $(Y,Z,N)\in\mathfrak{E}\times\mathfrak{M}^2\times\mc{M}^2$ be a solution to the BSDE \eqref{BSDEF}. Then $\mc{E}\bigl(q\,(Z\cdot M+N)\bigr)$ is a true martingale on $[0,T]$.
\end{thm}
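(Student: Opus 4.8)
The plan is the following. Write $L:=Z\cdot M+N$ for the martingale part; it is a continuous local martingale (continuous since $M$ and $N$ are) with $\lo L\ro_t=\int_0^tZ_s^\tr\,d\lo M\ro_sZ_s+\lo N\ro_t=\int_0^t\|B_sZ_s\|^2\,dA_s+\lo N\ro_t$, so $\mc{E}(qL)$ is a strictly positive continuous local martingale and hence a supermartingale. Since a local martingale of class D is a uniformly integrable martingale, it suffices to exhibit an integrable random variable $\Xi$ with $\mc{E}(qL)_t\le\Xi$ for every $t\in[0,T]$. The starting point is that BSDE$(F,\xi)$ gives $L_t=Y_t-Y_0+\int_0^tF(s,Y_s,Z_s)\,dA_s+\tfrac12\lo N\ro_t$, whence
\[\mc{E}(qL)_t=e^{-qY_0}\,e^{qY_t}\,\exp\!\Big(\int_0^t\big(qF(s,Y_s,Z_s)-\tfrac{q^2}{2}\|B_sZ_s\|^2\big)\,dA_s+\tfrac{q-q^2}{2}\lo N\ro_t\Big),\]
and that, by \eqref{Ftyzbetas}, the $dA$-integrand is bounded above by $|q|(\a_s+\ol\beta|Y_s|)+\tfrac{|q|(\gamma-|q|)}{2}\|B_sZ_s\|^2$.

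First I would treat the range $|q|\ge\gamma$. There $\gamma-|q|\le0$, and since $\gamma\ge1$ also $q-q^2=q(1-q)\le0$, so both finite-variation correction terms are nonpositive; using $A_t\le K_A$ and that $Y_0$ is (a.s.) constant, since $\mc{F}_0$ is trivial, this yields
\[\mc{E}(qL)_t\le e^{|q||Y_0|}\exp\!\big(|q|(1+\ol\beta K_A)\,Y^*+|q|\,|\a|_1\big)=:\Xi.\]
Because $Y\in\mathfrak{E}$ and $|\a|_1$ has exponential moments of all orders (Assumption \ref{ass}(i)), H\"older's inequality gives $\E[\Xi]<\infty$, which settles $|q|\ge\gamma$.

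For $\gamma/2<|q|<\gamma$ the idea is to reduce to the previous case by multiplying $\mc{E}(qL)$ by a suitable exponential of $Y$. Take $q>\gamma/2$ (the case $q<-\gamma/2$ being symmetric) and set $\theta:=2q-\gamma\in(0,q)$. By It\^o's formula and the BSDE, $\Phi_t:=\mc{E}(qL)_t\,e^{-\theta Y_t}$ satisfies $d\Phi_t=\Phi_t(q-\theta)\,dL_t+\Phi_t\big(\psi_t\,dA_t+\phi_t\,d\lo N\ro_t\big)$, where $\phi_t=\tfrac{(2q-\gamma)(1-\gamma)}{2}\le0$ is constant and, thanks to the choice of $\theta$, the $\|B_tZ_t\|^2$-part of $\psi_t$ cancels after invoking \eqref{Ftyzbetas}, leaving $\psi_t\le\theta(\a_t+\ol\beta|Y_t|)$. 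Since $q-\theta=\gamma-q$, integrating the linear equation for $\Phi$ gives
\[\mc{E}(qL)_t=e^{-\theta Y_0}\,K_t\,e^{\theta Y_t}\,\mc{E}\big((\gamma-q)L\big)_t,\qquad K_t:=\exp\!\Big(\int_0^t\psi_s\,dA_s+\int_0^t\phi_s\,d\lo N\ro_s\Big)\le\exp\!\big(\theta(|\a|_1+\ol\beta K_A\,Y^*)\big),\]
so that $K_t\,e^{\theta Y_t}$ is dominated, uniformly in $t$, by a random variable with exponential moments of all orders.

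The step I expect to be the main obstacle is upgrading this identity to a genuine class-D bound for $\mc{E}(qL)$: since $\gamma-q\in(0,\gamma/2)$ lies below the range already handled, $\mc{E}\big((\gamma-q)L\big)$ is at this stage only known to be a supermartingale, so one cannot simply take expectations in the displayed identity. I would close the argument by localizing $\mc{E}(qL)$ (and, if needed, $\mc{E}((\gamma-q)L)$) and passing to the limit, drawing on the a priori estimates of Section \ref{SecAPriori} — which, exploiting the convexity of $F$, furnish the additional exponential integrability of $\int_0^{\cdot}\|B_sZ_s\|^2\,dA_s$ and $\lo N\ro$ needed to obtain uniform integrability of the stopped family. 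Once $\mc{E}(qL)$ is shown to be of class D, the asserted true martingale property on $[0,T]$ follows.
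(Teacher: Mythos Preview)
Your treatment of the range $|q|\ge\gamma$ is correct: the representation of $L$ via the BSDE together with \eqref{Ftyzbetas} and $\gamma\ge1$ indeed kills both the $\|BZ\|^2\,dA$ and the $d\lo N\ro$ contributions, and the resulting bound $\mc{E}(qL)_t\le\Xi$ with $\Xi$ integrable gives class D. This is essentially the paper's computation of $G_\eta$ with $\eta=1$.

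The gap is in the range $\gamma/2<|q|<\gamma$. Your factorization $\mc{E}(qL)_t=e^{-\theta Y_0}K_t\,e^{\theta Y_t}\,\mc{E}((\gamma-q)L)_t$ is correct and the bound on $K_t\,e^{\theta Y_t}$ is fine, but the reduction does not close: $\gamma-q\in(0,\gamma/2)$ lies \emph{below} the range already handled, and you have no control on $\mc{E}((\gamma-q)L)$ beyond the trivial supermartingale bound. Your proposed rescue is where the argument breaks down. The a priori estimates of Section~\ref{SecAPriori} (and Corollary~\ref{Cornormbds}) yield only that $\lo L\ro_T\in L^{p/2}$ for all $p$, never exponential integrability of $\lo L\ro_T$; convexity of $F$ plays no role there (it enters only in Section~\ref{Uniq}). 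If one had exponential moments of $\lo L\ro_T$, Novikov's criterion would immediately give the result for \emph{all} $q\in\mathbb{R}$, which is strictly stronger than the theorem and is in general false---the paper stresses that $Z\cdot M+N$ need not be BMO, see \cite{FMW11}. So the localization step you sketch cannot be completed with the available estimates.

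The paper closes this gap differently, via Kazamaki's criterion (Lemma~\ref{kazamaki}). Instead of bounding only $\mc{E}(qL)_\tau$, one bounds the one-parameter family $G_\eta(\tau)=\exp\!\big(\eta\tilde{q}L_\tau+\tilde{q}^2(\tfrac12-\eta)\lo L\ro_\tau\big)$ uniformly over stopping times $\tau$; the same BSDE substitution you used shows that the $\lo L\ro$-coefficient is nonpositive precisely for $\eta\ge q_0:=|\tilde{q}|/(2|\tilde{q}|-\gamma)$. The key extra input is the \emph{interpolation} part of Kazamaki's lemma: once \eqref{eqkaz} holds for some $\eta^*>1$ it automatically holds for all $\eta\in(1,\eta^*)$, hence here for all $\eta>1$. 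Choosing $|\tilde{q}|\in(\gamma/2,|q|)$ and $\eta=q/\tilde{q}>1$ then yields the martingale property of $\mc{E}(qL)$ for every $|q|>\gamma/2$, without ever needing exponential control on $\lo L\ro_T$.
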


\begin{rmk}
The preceding theorems generalize the results of \cite{BH08} and \cite{Mo09} and the method of proof is therefore similar. We combine the localization and $\theta$-technique from \cite{BH08} together with the existence and stability results for BSDEs with bounded solutions found in \cite{Mo09}. Similar ideas are used in \cite{HS09} on a specific quadratic BSDE arising in a robust utility maximization problem where the authors also investigate the measure change problem for their special BSDE, however here we pursue the general theory. We point out that when the BSDE is of quadratic type and $|\xi|+|\alpha|_1$ does not have sufficiently large exponential moments there are examples where the BSDE admits no solution. Thus the results here can be considered, in some sense, the best possible. In particular, we present all the theoretical background for the study of utility maximization under exponential moments, see \cite{He10} and \cite{MW10a}, as well as partial equilibrium, see \cite{HPdR10}.
\end{rmk}


\section{A Priori Estimates}\label{SecAPriori}
In this section we show that, under appropriate
conditions, solutions to the BSDE \eqref{BSDEF-} satisfy some a
priori norm bounds. After giving an important result used in the subsequent sections we motivate Assumption \ref{ass} (v) by showing that without such an assumption the method utilized in \cite{Mo09} for the purpose of deriving appropriate a priori bounds fails in the present unbounded case.

Let $(Y,Z,N)$ be a solution to \eqref{BSDEF-}, suppose that Assumption \ref{ass} (iii) and (v)
hold and that $g$ is uniformly
bounded by $\gamma/2$. Fix $s\in[0,T]$ and set, for $t\in[s,T]$,
\begin{equation*}
\ti{H}_t:=\exp\!\left(\gamma e^{\beta^*(t-s)}|Y_t|+\gamma\int_s^te^{\beta^*(r-s)}\,d\lo\lambda\cdot M\ro_r\right).
\end{equation*}
where we have written $\lo \lambda\cdot M\ro_t:=\int_0^t\lambda_r^\tr\,d\lo M\ro_r\lambda_r=\int_0^t\a_r\,dA_r$. First we show that $\ti{H}$ is, up to integrability, a local submartingale.

From Tanaka's formula,
\begin{multline}
d|Y_t|=\sgn(Y_t)(Z_t^{\tr}\,dM_t+dN_t)-\sgn(Y_t)\big(F(t,Y_t,Z_t)\,dA_t+g_t\,d\lo N\ro_t\big)+dL_t\label{dabsY},
\end{multline}
where $L$ is the local time of $Y$ at 0. It{\^o}'s formula then yields
\begin{align*}
d\ti{H}_t=\gamma \ti{H}_t\,&e^{\beta^*(t-s)}\Bigg[\sgn(Y_t)(Z_t^{\tr}\,dM_t+dN_t) +\ol{\beta}|Y_t|({c}_A\,dt-dA_t)\\
&+\left(-\sgn(Y_t)F(t,Y_t,Z_t)+\a_t+\ol{\beta}|Y_t|+\frac{\gamma}{2}\,e^{\beta^*(t-s)}\|B_tZ_t\|^2\right)dA_t\\
&+\left(-\sgn(Y_t)\,g_t
+\frac{\gamma}{2}\,e^{\beta^*(t-s)}\right)d\lo N\ro_t
+dL_t\Bigg].
\end{align*}
An inspection of the finite variation parts shows that under the present assumptions they are nonnegative. In particular, the semimartingale $\ti{H}$ is a local submartingale, 
which leads to the following result.
\begin{prop}[A Priori Estimate]\label{apriori0}
Suppose Assumption \ref{ass} (iii) and (v) hold and assume that the function $g$ is uniformly bounded by $\gamma/2$, $\mb{P}$-a.s. Let
$(Y,Z,N)$ be a solution to the BSDE \eqref{BSDEF-} and let the process
\begin{equation*}
\exp\!\left(\gamma e^{\beta^*T}|Y|+\gamma\int_0^Te^{\beta^*r}\,d\lo\lambda\cdot M\ro_r \right)
\end{equation*}
be of class D. Then $\mb{P}$-a.s. for all $s\in[0,T]$,
\begin{equation}
|Y_s|\leq\frac{1}{\gamma}\log\E\!\left[\exp\!\left(\gamma e^{\beta^*(T-s)}|\xi|+\gamma\int_s^Te^{\beta^*(r-s)}\,d\lo\lambda\cdot M\ro_r \right)\!\Bigg|\,\mc{F}_s\right]\label{aposteriori}.
\end{equation}
\end{prop}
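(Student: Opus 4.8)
The plan is to turn the local submartingale property of $\ti{H}$ established just above into the conditional bound \eqref{aposteriori} by a localization / optional-stopping argument, using the class D hypothesis only to justify one limit passage.

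First I would fix $s\in[0,T]$ and record the decomposition coming from the It{\^o} computation above: on $[s,T]$ the nonnegative semimartingale $\ti{H}$ is the sum of $\ti{H}_s=e^{\gamma|Y_s|}$, a continuous local martingale started at $0$, and a continuous increasing process started at $0$. Then I would choose a localizing sequence $(\sigma_n)$ with $\sigma_n\uparrow T$ (and $\sigma_n\ge s$) for which $\ti{H}^{\sigma_n}$ is a genuine submartingale on $[s,T]$ — for instance by also stopping when the continuous increasing part, or the quadratic variation of the local martingale part, reaches level $n$. Applying the submartingale inequality between times $s$ and $T$ then gives
\[
e^{\gamma|Y_s|}\;=\;\ti{H}_s\;\le\;\E\bigl[\ti{H}_{T\wedge\sigma_n}\bigm|\mc{F}_s\bigr]
\]
for every $n$.

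Next I would let $n\to\infty$. Path-continuity of $Y$ and of $\lo\lambda\cdot M\ro$, together with $Y_T=\xi$, give $\ti{H}_{T\wedge\sigma_n}\to\exp\bigl(\gamma e^{\beta^*(T-s)}|\xi|+\gamma\int_s^Te^{\beta^*(r-s)}\,d\lo\lambda\cdot M\ro_r\bigr)$ almost surely. The point that makes the class D assumption usable is the elementary pointwise estimate: since $s\ge 0$, $\beta^*\ge0$ and $d\lo\lambda\cdot M\ro=\a\,dA$ is a nonnegative measure, for every $t\in[s,T]$ one has $e^{\beta^*(t-s)}|Y_t|\le e^{\beta^*T}|Y_t|$ and $\int_s^te^{\beta^*(r-s)}\,d\lo\lambda\cdot M\ro_r\le\int_0^Te^{\beta^*r}\,d\lo\lambda\cdot M\ro_r$, so that $\ti{H}_{T\wedge\sigma_n}$ is dominated by the value at the stopping time $T\wedge\sigma_n$ of the process $\exp\bigl(\gamma e^{\beta^*T}|Y|+\gamma\int_0^Te^{\beta^*r}\,d\lo\lambda\cdot M\ro_r\bigr)$, which is assumed to be of class D. Hence $\{\ti{H}_{T\wedge\sigma_n}\}_n$ is uniformly integrable, $\E[\ti{H}_{T\wedge\sigma_n}\mid\mc{F}_s]\to\E[\ti{H}_T\mid\mc{F}_s]$ in $L^1$ and thus along a subsequence almost surely, and passing to the limit in the displayed inequality yields $e^{\gamma|Y_s|}\le\E[\ti{H}_T\mid\mc{F}_s]$; taking logarithms and dividing by $\gamma$ is exactly \eqref{aposteriori}. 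That this then holds $\mathbb{P}$-a.s. simultaneously for all $s\in[0,T]$ follows by a standard argument: the left-hand side is continuous in $s$, so it suffices to control the right-hand side along a countable dense set of times and to use a suitably regular (c{\`a}dl{\`a}g, or continuous under Assumption \ref{ass_filtr}) modification of it.

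I expect the argument to be essentially routine once the submartingale property above is in hand; the only genuinely delicate step is the interchange of limit and conditional expectation as $n\to\infty$, and it is precisely there that the class D hypothesis enters — without some such integrability control the stopped values $\ti{H}_{T\wedge\sigma_n}$ need not be uniformly integrable and the estimate could fail. (This proposition does not itself assert that its class D hypothesis is met; verifying it for solutions in $\mathfrak{E}$, so as to obtain a true a priori bound, is carried out using Assumption \ref{ass}(i) and (v) in the surrounding discussion.)
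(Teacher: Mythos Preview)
Your argument is correct and follows essentially the same route as the paper: fix $s$, localize so that the stopped $\ti{H}$ is a true submartingale, apply the submartingale inequality at times $s$ and $T\wedge\sigma_n$, and then pass to the limit using the class D hypothesis to obtain uniform integrability. The paper's proof is more terse, but the pointwise domination you spell out (bounding $\ti{H}_{T\wedge\sigma_n}$ by the assumed class D process evaluated at $T\wedge\sigma_n$) is exactly what is implicit in their ``the claim follows from the class D assumption''.
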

\begin{proof}
Fix $s\in[0,T]$ and set $\ti{H}$ as above. Since $\ti{H}$ is a local submartingale there exists a sequence of stopping times $(\tau_n)_{n\geq1}$ valued in $[s,T]$, which converges $\mb{P}$-a.s. to $T$, such that $\ti{H}^{\tau_n}$ is a submartingale for each $n\geq 1.$ 
We then derive
\begin{align*}
\exp(\gamma |Y_s|)
\leq \E[\ti{H}_{T\wedge\tau_n}|\,\mc{F}_s]\leq
\E\!\left[\exp\!\left(\gamma  e^{\beta^*(T-s)}|Y_{T\wedge\tau_n}|+\gamma\int_s^Te^{\beta^*(r-s)}\,d\lo \lambda \cdot
M\ro_r\right)\!\Bigg|\,\mc{F}_s\right].
\end{align*}
Letting $n\to+\infty$ the claim follows from the class D
assumption.
\end{proof}
Proposition \ref{apriori0} provides the appropriate a priori
estimate, indeed suppose that $|\xi|$ and $|\a|_1$ are bounded random variables and
$(Y,Z,N)$ is a solution to \eqref{BSDEF}. If the current
assumptions hold and $\exp(\gamma e^{\beta^*T}|Y|)$ is
of class D, then $Y$ satisfies
\begin{equation}\label{a1bound}
|Y|\leq\left\| e^{\beta^*T}(|\xi|+|\a|_1)\right\|_\infty.
\end{equation}
Comparing with \eqref{aposteriori} this indicates that the inclusion of Assumption \ref{ass} (v) allows us to prove similar estimates to the bounded case which enables us to establish existence for the BSDE \eqref{BSDEF} when $|\xi|+|\a|_1$ has exponential moments of all orders, to be more precise, an order of at least $\gamma
e^{\beta^*T}$.

Contrary to the above let us investigate the method utilized in \cite{Mo09} under Assumption \ref{ass} (iii) only, supposing that $g$ be bounded by $\gamma/2$. We set
\begin{equation}
H_t:=\exp\!\left(\gamma e^{\beta\lo \lambda \cdot M\ro_{s,t}}|Y_t|+\gamma\int_s^te^{\beta\lo \lambda\cdot M\ro_{s,r}}\,d\lo\lambda\cdot M\ro_r\right),
\end{equation}
where $\lo \lambda \cdot M\ro_{s,t}:=\lo \lambda \cdot M\ro_{t}-\lo \lambda \cdot M\ro_{s}=\int_s^t\a_r\,dA_r$. We derive from It{\^o}'s formula
\begin{align*}
dH_t=
\gamma H_t\,&e^{\beta\lo\lambda\cdot M\ro_{s,t}}\Bigg[\sgn(Y_t)(Z_t^{\tr}\,dM_t+dN_t) \\
&+\left(-\sgn(Y_t)F(t,Y_t,Z_t)+\a_t+\a_t\beta|Y_t|+\frac{\gamma}{2}\,e^{\beta\lo\lambda\cdot M\ro_{s,t}}\|B_tZ_t\|^2\right)dA_t\\
&+\left(-\sgn(Y_t)\,g_t +\frac{\gamma}{2}\,e^{\beta\lo\lambda\cdot M\ro_{s,t}}\right)d\lo N\ro_t +dL_t\Bigg].
\end{align*}
Once again, the finite variation parts are nonnegative. We conclude in the same way as for Proposition \ref{apriori0} that the corresponding a priori result holds for $H$ as well. To sum up, we have that under a similar class D assumption, now on
\begin{equation*}
\exp\!\left(\gamma e^{\beta\lo \lambda \cdot M\ro_T}|Y|+\gamma\int_0^Te^{\beta\lo \lambda\cdot M\ro_{r}}\,d\lo\lambda\cdot M\ro_r\right),
\end{equation*}
$\mb{P}$-a.s. for all $s\in[0,T]$,
\begin{equation}
|Y_s|\leq\frac{1}{\gamma}\log\E\!\left[\exp\left(\gamma e^{\beta \lo \lambda \cdot M\ro_{s,T}}|\xi|+\gamma\int_s^Te^{\beta \lo \lambda \cdot M\ro_{s,r}}\,d\lo\lambda\cdot M\ro_r \right)\!\Bigg|\,\mc{F}_s\right].\label{eqapriori}
\end{equation}
If $\beta=0$, then $\ti{H}$ from above equals $H$ and there is no difference with the statement of Proposition \ref{apriori0}. However when $\beta>0$ the estimate \eqref{eqapriori} is not sufficient for our purposes. We aim at using the a priori estimate to show the existence of solutions to the BSDE \eqref{BSDEF} in $\mathfrak{E}\times\mathfrak{M}^2\times\mc{M}^2$
using an appropriate approximating procedure. If $|\xi|$ and $|\a|_1$ are bounded random variables there exists a
solution $(Y,Z,N)$ to \eqref{BSDEF} with $Y$ bounded, cf.
\cite{Mo09}. With \eqref{eqapriori} at our disposal we then have the estimate 
\begin{equation}\label{a1boundpre}
|Y|\leq\left\|e^{\beta
|\a|_1}\big(|\xi|+|\a|_1\big)\right\|_\infty.
\end{equation}
Our goal is to remove the boundedness assumption and to replace it
with the assumption on the existence of exponential moments of
$|\xi|+|\a|_1$ in the spirit of \cite{BH08}. However a closer
inspection of the a priori estimate from \eqref{eqapriori} together with \eqref{a1boundpre}
already indicates that more restrictive assumptions are necessary. More specifically, when $\beta>0$ we cannot deduce any integrability
of $\exp\!\left(\gamma
e^{\beta|\a|_1}\big(|\xi|+|\a|_1\big)\right)$ when $|\xi|$ and
$|\a|_1$ have only exponential moments, this motivates Assumption \ref{ass} (v).


\section{Existence}\label{Exist}
In the present section we establish Theorem \ref{ThmExist} together with some related results on norm bounds of the solution.
The proof of existence follows the following recipe. Firstly we truncate $\lo \lambda\cdot M\ro$ to get approximate solutions. Then by using the estimate from Proposition \ref{apriori0} we localize and work on a random time interval so that the approximations are uniformly bounded and we can apply a stability result. Finally we glue together on $[0,T]$ to construct a solution. The a priori estimates ensure that we may take all limits in the described procedure.

\begin{thm}[Existence]\label{ThmExistPr}
Let Assumptions \ref{ass} (ii)-(v) and \ref{ass_filtr} hold and $|\xi|+|\a|_1$ have an exponential moment of order $\gamma
e^{\beta^*T}$. Then the BSDE \eqref{BSDEF} has a solution
$(Y,Z,N)$ such that
\begin{equation}\label{aposterioriPr}
|Y_t|\leq\frac{1}{\gamma}\log\E\!\left[\exp\left(\gamma\, e^{\beta^*(T-t)}|\xi|+\gamma\int_t^Te^{\beta^*(r-t)}\,d\lo\lambda\cdot M\ro_r \right)\!\Bigg|\,\mc{F}_t\right].
\end{equation}
\end{thm}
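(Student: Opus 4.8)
**Proof proposal for Theorem \ref{ThmExistPr} (Existence under an exponential moment of order $\gamma e^{\beta^* T}$).**

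The plan is to follow the recipe announced before the statement: truncate, localize, apply the bounded-terminal-value existence theory of \cite{Mo09}, and then glue. Concretely, for each integer $k\geq 1$ set $\a^k_t := \a_t \wedge k$, so that $|\a^k|_1 = \int_0^T \a^k_t\,dA_t \leq k K_A$ is bounded, and let $F^k(t,y,z)$ be the driver obtained by replacing $\a_t$ with $\a^k_t$ in the growth bound — more carefully, since we must keep $F^k$ honestly a driver satisfying Assumption \ref{ass} (ii)-(v) with parameter $\a^k$, one truncates $F$ itself via $F^k(t,y,z):=F(t,y,z)\vee(-(\a^k_t+\ol\beta|y|+\tfrac\gamma2\|B_tz\|^2))\wedge(\a^k_t+\ol\beta|y|+\tfrac\gamma2\|B_tz\|^2)$, which preserves continuity, convexity in $z$, the Lipschitz property in $y$, and the structure condition (v). Because $|\xi|\wedge k + |\a^k|_1$ is bounded, the results of \cite{Mo09} (valid under Assumption \ref{ass} (ii)-(iv) and \ref{ass_filtr}, together with (v) to guarantee the a priori bound) furnish a solution $(Y^k,Z^k,N^k)$ to BSDE$(F^k,\xi\wedge k)$ with $Y^k$ bounded; in particular $\exp(\gamma e^{\beta^*T}|Y^k|)$ is of class D, so Proposition \ref{apriori0} applies and yields
\begin{equation*}
|Y^k_t|\leq\frac1\gamma\log\E\!\left[\exp\!\left(\gamma e^{\beta^*(T-t)}(|\xi|\wedge k)+\gamma\int_t^T e^{\beta^*(r-t)}\,d\lo\lambda\cdot M\ro_r\right)\Bigg|\,\mc F_t\right].
\end{equation*}
By the assumed exponential moment of order $\gamma e^{\beta^*T}$ on $|\xi|+|\a|_1$ (note $\int_t^T e^{\beta^*(r-t)}d\lo\lambda\cdot M\ro_r\leq e^{\beta^*T}|\a|_1$), the right-hand side is, uniformly in $k$ and in $t$, dominated by a fixed process $\Psi\in\mathfrak E$; this is the key uniform bound that powers the rest of the argument.

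Next I would localize. Introduce the stopping times $\sigma_m := \inf\{t: \Psi_t \geq m\}\wedge T$ (or use directly $\rho_m:=\inf\{t:\int_0^t\a_r\,dA_r\geq m\}\wedge T$ combined with a bound coming from $\E[\exp(\cdots)|\mc F_t]$, whichever is cleaner). On the stochastic interval $[0,\sigma_m]$ every $Y^k$ is bounded by $m$, so that, restricting the quadratic growth estimate to this interval, one can apply the \emph{stability} theorem for BSDEs with bounded solutions from \cite{Mo09} to the sequence $(F^k,\xi\wedge k)$: since $\xi\wedge k\to\xi$ and $F^k\to F$ in the appropriate (monotone/pointwise-plus-domination) sense and all data are uniformly controlled on $[0,\sigma_m]$, we obtain, along a subsequence, convergence of $Y^k$ uniformly on $[0,\sigma_m]$ and of $Z^k\cdot M+N^k$ in $\mc M^2([0,\sigma_m])$ to a limit $(Y,Z,N)$ solving BSDE$(F,\xi)$ on $[0,\sigma_m]$. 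A diagonal extraction over $m$ (using that the solutions on $[0,\sigma_{m}]$ and $[0,\sigma_{m+1}]$ agree on $[0,\sigma_m]$ by uniqueness on bounded intervals) produces a triple $(Y,Z,N)$ defined on $[0,\sup_m\sigma_m)=[0,T)$ and solving \eqref{BSDEF} there, with $Y$ satisfying the pointwise bound by $\Psi$.

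Finally I would extend to $\{T\}$ and verify the claimed integrability. Since $|Y_t|\leq\Psi_t$ and $\Psi$ is a right-continuous process of class D with $\Psi_T$ finite a.s., $Y$ has a continuous extension with $Y_T=\xi$; the martingale part $Z\cdot M+N$ is then defined on all of $[0,T]$. The a priori estimate \eqref{aposteriori} of Proposition \ref{apriori0}, once we check its class-D hypothesis for the limit $Y$ (which follows from $|Y|\leq\Psi\in\mathfrak E$ and the quadratic growth, via the same $\ti H$-submartingale argument as in Section \ref{SecAPriori}), gives exactly \eqref{aposterioriPr}. The main obstacle I anticipate is the localization/gluing step: one must check that the stability theorem of \cite{Mo09} genuinely applies on the random intervals $[0,\sigma_m]$ — this requires the terminal data at the random time $\sigma_m$, namely $Y^k_{\sigma_m}$, to be uniformly bounded (which it is, by construction of $\sigma_m$ via $\Psi$) and requires the convergence $\int_0^{\sigma_m}|F^k-F|(s,Y_s,Z_s)\,dA_s\to 0$ in probability, which one gets from the monotone truncation together with the uniform exponential integrability. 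Care is also needed to make sure that $N^k$ stays orthogonal to $M$ after restriction and in the limit, and that the diagonal subsequence is consistent across all $m$; here one leans on Theorem \ref{ThmUniq} (uniqueness on bounded intervals) to patch the pieces together unambiguously.
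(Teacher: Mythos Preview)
Your overall strategy---truncate, get bounded solutions from \cite{Mo09}, localize via a stopping time built from the a~priori bound, pass to the limit via stability, and glue---is exactly the recipe the paper uses. The execution, however, differs in ways that create genuine gaps.

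\textbf{Truncation.} The paper truncates by a \emph{stopping time}: with $\sigma_n:=\inf\{t:\lo\lambda\cdot M\ro_t\geq n\}\wedge T$, it sets $F^n(t,y,z):=\I{\{t\leq\sigma_n\}}F(t,y,z)$ and $\xi^n:=\xi\wedge n$, after first reducing to the case $F,\xi\geq0$ (and a double truncation $\xi^{n,m},F^{n,m}$ in the signed case). This choice is crucial: $F^n$ inherits \emph{all} structural properties of $F$ for free, and (in the nonnegative case) $F^n\leq F^{n+1}$, $\xi^n\leq\xi^{n+1}$, so the comparison theorem in \cite{Mo09} gives $Y^n\uparrow$ monotonically. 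Your pointwise clipping $F^k:=(F\vee -b_k)\wedge b_k$ with $b_k=\a^k+\ol\beta|y|+\tfrac\gamma2\|B z\|^2$ does \emph{not} preserve convexity in $z$ (the minimum of two convex functions is generally not convex), contrary to what you claim; nor do you address the sign reduction, so $\xi\wedge k$ need not be bounded below.

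\textbf{Passage to the limit and terminal value.} Because of monotonicity, the paper gets full-sequence convergence $Y^n_{\tau_k}\to\xi_k:=\sup_n Y^n_{\tau_k}$ (bounded by $k$) and then applies the stability lemma of \cite{Mo09} on $[\![0,\tau_k]\!]$ to a monotone sequence with a fixed limit terminal value. No subsequences are needed. On $\{\tau_{k_0}=T\}$ one gets $Y_T=\xi_{k_0}=\sup_n\xi^n=\xi$ directly. Your argument ``since $|Y_t|\leq\Psi_t$ and $\Psi$ is of class~D with $\Psi_T$ finite, $Y$ has a continuous extension with $Y_T=\xi$'' is not valid: a pointwise bound gives no information about the \emph{value} of the limit at $T$.

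\textbf{Gluing.} The paper glues by the elementary consistency $Y^{n,k+1}_{\cdot\wedge\tau_k}=Y^{n,k}$ (trivially true from the stopped definitions), which survives the limit in $n$; no uniqueness theorem is invoked. Your reliance on Theorem~\ref{ThmUniq} to patch the pieces is problematic because that theorem requires convexity in $z$, which your clipped $F^k$ need not have (and, as the paper remarks, the existence proof is designed \emph{not} to use convexity).

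In short, replace your pointwise/clipping truncation by the stopping-time truncation $F^n=\I{\{t\leq\sigma_n\}}F$, precede it by the reduction to $F,\xi\geq0$, exploit the resulting monotonicity (comparison in \cite{Mo09}) to obtain full-sequence limits and the terminal identification, and glue by consistency of the stopped processes rather than by appealing to uniqueness.
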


\begin{proof}
Exactly as in \cite{BH08} we first assume that $F$ and $\xi$ are
nonnegative. For each integer $n\geq 1$, set
\[ \sigma_n:=\inf\!\left\{t\in[0,T]\,\bigg|\lo\lambda\cdot M\ro_t:=\int_0^t\alpha_s\,dA_s\geq n\right\}\wedge T,\]
$\xi^n:=\xi\wedge n$, $\lambda^n_t:=\I{\!\{t\leq
\sigma_n\!\}}\lambda_t$ and $F^n(t,y,z):=\I{\!\{t\leq
\sigma_n\!\}}F(t,y,z)$. Then $F^n$ satisfies Assumption \ref{ass} (ii)-(v) with
the same constants, but with the processes $\lambda^n$ and $\a^n$
where
\[\a^n_t:=\|B_t\lambda_t^n\|^2=\I{\!\{t\leq \sigma_n\!\}}\|B_t\lambda_t\|^2=\I{\!\{t\leq \sigma_n\!\}}\a_t
.\] In particular, $|\a^n|_1=\int_0^{\sigma_n}\alpha_s\,dA_s\leq n$ and
\[\int_0^T(\lambda^n_t)^\tr\,d\lo M\ro_t\lambda^n_t=\int_0^T\|B_t\lambda^n_t\|^2\,dA_t=|\a^n|_1\leq n,\] so we may apply \cite{Mo09} Theorem 2.5 and Theorem 2.6 to conclude that there exists a unique solution $(Y^n,Z^n,N^n)\in\mc{S}^\infty\times \mathfrak{M}^2\times \mc{M}^2$ to the BSDE \eqref{BSDEF}, where $F$ is replaced by $F^n$ and $\xi$ by $\xi^n$. From Proposition \ref{apriori0} we derive
\begin{align}
|Y^n_t|&\leq\frac{1}{\gamma}\log\E\!\left[\exp\!\left(\gamma e^{\beta^*(T-t)}|\xi^n|+\gamma\int_t^Te^{\beta^*(r-t)}\,d\lo\lambda^n\cdot M\ro_r \right)\!\Bigg|\,\mc{F}_t\right]\notag\\
&\leq\frac{1}{\gamma}\log\E\!\left[\exp\!\left(\gamma e^{\beta^*(T-t)}|\xi|+\gamma\int_t^Te^{\beta^*(r-t)}\,d\lo\lambda\cdot M\ro_r \right)\!\Bigg|\,\mc{F}_t\right]\notag\\
&\leq\frac{1}{\gamma}\log\E\!\left[\exp\!\left(\gamma e^{\beta^*T}\big(|\xi|+|\a|_1\big)\right)\!\Bigg|\,\mc{F}_t\right]=:X_t.\label{Xt}
\end{align}
Let $n\leq m$ so that then we have $\sigma_n\leq \sigma_m$ and $\I{\{t\leq\sigma_n\!\}}\leq\I{\{t\leq \sigma_m\!\}}$. In particular, $\xi^n\leq\xi^m$ and $F^n\leq F^m$, from which we deduce that the current assumptions, hence the corresponding assumptions in \cite{Mo09}, hold for both $F^n$ and $F^m$ with the same set of parameters $(\a^m,\beta,\ol{\beta},\beta_f,\gamma)$ where the additional $c_\theta$ in \cite{Mo09} is equal to $m$. An application of Theorem 2.7 therein now shows that $Y^n\leq Y^m$ so that $(Y^n)_{n\geq 1}$ is an increasing sequence of bounded continuous processes.\\
Let $k\geq 1$ be a fixed integer and
\[ \tau_k:=\inf\!\left\{t\in[0,T]\,\Big|X_t\geq k \text{ or } \lo\lambda\cdot M\ro_t \geq k\right\}\wedge T.\]
Thanks to the continuity of the filtration the martingale $\exp(\gamma X)$ is continuous so that the random variable \[V:=\max_{t\in[0,T]}\bigl(X_t\bigr) \vee \lo \lambda\cdot M\ro_T\]
is finite $\mathbb{P}$-a.s. We derive that $\mathbb{P}$-a.s. $\tau_k=T$ for large $k$. Due to \eqref{Xt} the sequence $(Y^{n,k})_{n\geq 1}$ given by
\[Y^{n,k}_t:=Y^n_{t\wedge\tau_k},\]
is uniformly bounded by $k$. For the martingale parts we define
\[Z^{n,k}_t:=\I{\!\{t\leq \tau_k\!\}}Z^n_t\;\text{ and }\; N^{n,k}_t:=\I{\!\{t\leq \tau_k\!\}}N^n_t.\]
An inspection of the respective cases shows that
\begin{multline*}
Y^{n,k}_t=Y^n_{\tau_k}-\int_t^T\big(Z^{n,k}_s\big)^{\tr}\,dM_s-\int_t^TdN^{n,k}_s\\+\int_t^T\I{\!\{s\leq
\tau_k\wedge\sigma_n\!\}}F(s,Y^{n,k}_s,Z^{n,k}_s)\,dA_s+\frac{1}{2}\int_t^Td\lo
N^{n,k}\ro_s.
\end{multline*}
Moreover,
$Y^n_{\tau_k}\xrightarrow{n\uparrow+\infty}\sup_{\substack{n\geq
1}}Y_{\tau_k}^n=:\xi_k$, where $\xi_k$ is bounded by $k$. Next we
appeal to the stability result stated in \cite{Mo09} Lemma 3.3, noting
Remark 3.4 therein. Note that this result requires estimates
that are uniform in $n$ which is accomplished by the specific
choice of the stopping time $\tau_k$. Hence $(Y^{n,k},Z^{n,k},N^{n,k})$
converges to $(Y^{\infty,k},Z^{\infty,k},N^{\infty,k})$ in the sense that
\[\lim_{n\to+\infty}\E\!\left(\sup_{\substack{0\leq t \leq T}}\big|Y^{n,k}_t-Y^{\infty,k}_t\big|\right)=0,\]
\[\lim_{n\to+\infty}\E\!\left(\int_0^T\Big(Z^{n,k}_s-Z^{\infty,k}_s\Big)^{\tr}d\lo M\ro_s\Big(Z^{n,k}_s-Z^{\infty,k}_s\Big)\right)=0\]
and
\[\lim_{n\to+\infty}\E\!\left(\Big|N^{n,k}_T-N^{\infty,k}_T\Big|^2\right)=0,\]
where the triples $(Y^{\infty,k},Z^{\infty,k},N^{\infty,k})$ solve the BSDE
\begin{multline*}
dY^{\infty,k}_t=\Big(Z^{\infty,k}_t\Big)^{\tr}dM_t+dN^{\infty,k}_t\\-\I{\{t\leq\tau_k\!\}}F(t,Y^{\infty,k}_t,Z^{\infty,k}_t)\,dA_t-\frac{1}{2}\,d\lo
N^{\infty,k}\ro_t,\quad Y^{\infty,k}_{\tau_k}=\xi_k,
\end{multline*}
on the random horizon $[\![0,\tau_k]\!]\subset[0,T]$. 
The stopping times $\tau_k$ are monotone in $k$ and therefore it follows that
\[ Y^{n,k+1}_{t\wedge\tau_{k}}=Y^{n,k}_t,\quad \I{\!\{t\leq \tau_k\!\}}Z^{n,k+1}_t=Z^{n,k}_t \quad\text{and}\quad\I{\!\{t\leq \tau_k\!\}}N^{n,k+1}_t=N^{n,k}_t,\]
so that the above convergence yields (for the two last objects in $\mc{M}^2$)
\begin{gather*}
Y^{\infty,k+1}_{t\wedge\tau_k}=Y^{\infty,k}_t,\; \,\left(\!\left(\I{\{t\leq \tau_k\}}Z^{\infty,k+1}\right)\cdot M\right)_t=\left(Z^{\infty,k}\cdot
M\right)_t\text{ and }\, \I{\!\{t\leq
\tau_k\!\}}N^{\infty,k+1}_t=N^{\infty,k}_t.
\end{gather*}
To finish the proof, we define the processes
\begin{eqnarray*}
Y_t&:=&\I{\!\{t\leq \tau_1\!\}}Y^{\infty,1}_t+\sum_{k\geq 2}\I{\!\{t\in]\!]\tau_{k-1},\tau_k]\!]\}}Y^{\infty,k}_t,\\
Z_t&:=&\I{\!\{t\leq \tau_1\!\}}Z^{\infty,1}_t+\sum_{k\geq 2}\I{\!\{t\in]\!]\tau_{k-1},\tau_k]\!]\}}Z^{\infty,k}_t\\
\text{and }\,N_t&:=&\I{\!\{t\leq \tau_1\!\}}N^{\infty,1}_t+\sum_{k\geq 2}\I{\!\{t\in]\!]\tau_{k-1},\tau_k]\!]\}}N^{\infty,k}_t.
\end{eqnarray*}
By construction this gives a solution to the BSDE
\begin{align*}
dY_t=Z_t^{\tr}\,dM_t+dN_t-F(t,Y_t,Z_t)\,dA_t-\frac{1}{2}\,d\lo
N\ro_t,\quad Y_T=\xi,
\end{align*}
since $\I{\!\{t\leq \tau_1\!\}}+\sum_{k\geq 2}\I{\!\{t\in]\!]\tau_{k-1},\tau_k]\!]\}}=\I{\!\{t\in[0,T]\!\}}$ $\mathbb{P}$-a.s. More precisely, there is a $\mathbb{P}$-null set $\mathfrak{N}$ such that for all $\omega\in\mathfrak{N}^c$ there is a $k_0(\omega)$ with $\tau_{k_0(\omega)}(\omega)=T$ and such that $Y^{\infty,k}_{\tau_k(\omega)}(\omega)=\xi_k(\omega)$ for all $k$, which yields that (possibly after another modification of $\mathfrak{N}$) \[Y_T(\omega)=Y^{\infty,k_0(\omega)}_{T}(\omega)=\xi_{k_0(\omega)}(\omega)=\sup_{\substack{n\geq
1}}Y_{T}^n(\omega)=\xi(\omega).\]
The bound in \eqref{aposterioriPr} holds as we have it for all $n$ and $k$ from \eqref{Xt}.\\
In the case when $\xi$ and $f$ are not necessarily nonnegative, we proceed as in \cite{BH08} by using a double truncation defined by $\xi^{n,m}:=\xi^+\wedge n-\xi^-\wedge
m$, $\lambda^{n,m}:=\I{\!\{t\leq
\sigma_n\!\}}\lambda^+-\I{\!\{t\leq \sigma_m\!\}}\lambda^-$ and
$F^{n,m}:=\I{\!\{t\leq \sigma_n\!\}}F^+-\I{\!\{t\leq
\sigma_m\!\}}F^-$.
\end{proof}

As an immediate corollary we deduce
\begin{cor}[Norm Bounds]\label{Cornormbds}\mbox{}
\begin{enumerate}
\item Let Assumptions \ref{ass} (ii)-(v) and \ref{ass_filtr} hold and $|\xi|+|\a|_1$ have an exponential moment of order $\delta^*>\gamma e^{\beta^*T}$. Then the BSDE \eqref{BSDEF} has a solution $(Y,Z,N)$ such that $e^{\gamma Y}\in\mc{S}^{p^*}$ for $p^*:=\frac{\delta^*}{\gamma e^{\beta^* T}}>1$.\\
When additionally $|\xi|+|\a|_1$ has exponential moments of all orders, i.e.
Assumption \ref{ass} (i) holds, this solution is such that
$Y\in\mathfrak{E}$. In particular, for each $p>1$ we have the estimate
\begin{gather}
\E\!\left[e^{p\gamma
Y^*}\right]\leq\left(\frac{p}{p-1}\right)^p\E\!\left[\exp\!\bigg(p\gamma
e^{\beta^*T}\Big(|\xi|+|\a|_1\!\Big)\!\bigg)\right]\!.\label{normbdsY}
\end{gather}
\item Let Assumption \ref{ass} (i)-(iii) and (v) hold and suppose there exists a solution $(Y,Z,N)$ to the BSDE \eqref{BSDEF} such that
$Y\in\mathfrak{E}$. Then $(Z,N)\in\mathfrak{M}^p\times\mc{M}^p$ for all $p\geq 1$, more precisely
\begin{gather}
\E\!\left[\!\left(\!\int_0^T\!\!Z_s^\tr\,d\lo
M\ro_sZ_s+d\lo N\ro_s\!\right)^{p/2}\right]\leq
c_{p,\gamma}\,\E\!\left[\exp\!\bigg(\!4p\gamma
e^{\beta^*T}\Big(|\xi|+|\a|_1\!\Big)\!\bigg)\!\right]\!,\label{normbds2}
\end{gather}
where $c_{p,\gamma}$ is a positive constant depending on $p$ and
$\gamma$. The estimate \eqref{normbdsY} then holds as well.
\end{enumerate} 
\end{cor}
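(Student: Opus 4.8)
I would treat the two items in turn. Item (i) follows almost immediately from Theorem~\ref{ThmExistPr}: that result already produces a solution $(Y,Z,N)$ obeying the pointwise bound \eqref{aposterioriPr}, and since $\beta^*\geq 0$ one has $e^{\beta^*(T-t)}\leq e^{\beta^*T}$ and $\int_t^Te^{\beta^*(r-t)}\,d\lo\lambda\cdot M\ro_r\leq e^{\beta^*T}\an$, so \eqref{aposterioriPr} gives $e^{\gamma|Y_t|}\leq\E[\exp(\gamma e^{\beta^*T}(|\xi|+\an))\,|\,\mc{F}_t]=e^{\gamma X_t}$ with $X$ the process from \eqref{Xt}. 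The right-hand side is a nonnegative martingale whose terminal value $\exp(\gamma e^{\beta^*T}(|\xi|+\an))$ lies in $L^{p^*}$ exactly because $|\xi|+\an$ has an exponential moment of order $\delta^*=p^*\gamma e^{\beta^*T}$; Doob's $L^{p^*}$-inequality then bounds $\E[e^{p^*\gamma Y^*}]$ by $(\tfrac{p^*}{p^*-1})^{p^*}\E[\exp(\delta^*(|\xi|+\an))]<+\infty$, whence $e^{\gamma Y}\in\mc{S}^{p^*}$. Under the additional Assumption~\ref{ass}~(i) this computation is legitimate for every $\delta^*=p\gamma e^{\beta^*T}$, $p>1$, which is precisely \eqref{normbdsY} and shows $Y\in\mathfrak{E}$.

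For item (ii), write $\mathfrak{m}:=Z\cdot M+N$, so that $\lo\mathfrak{m}\ro=\int_0^\cdot Z_s^\tr\,d\lo M\ro_sZ_s+\lo N\ro$ is continuous and $\lo\mathfrak{m}\ro_T<+\infty$ $\mb{P}$-a.s.\ by the definition of a solution. Since $Y\in\mathfrak{E}$, the process appearing in the class~D hypothesis of Proposition~\ref{apriori0} is dominated, via Cauchy--Schwarz and \eqref{AssExpMom}, by an integrable random variable; hence Proposition~\ref{apriori0} applies and, just as in (i), the bound \eqref{normbdsY} holds here too. To control the martingale part I would apply It\^o's formula to $e^{2\gamma|Y|}$, using Tanaka's formula \eqref{dabsY} for $|Y|$, the identity $\lo Y\ro=\lo\mathfrak{m}\ro$ and the two-sided bound \eqref{Ftyzbetas}. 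Because $\gamma\geq 1$, the $d\lo N\ro$-coefficient $\gamma e^{2\gamma|Y|}(2\gamma-\sgn Y)$ and the local-time contribution are nonnegative, and after inserting $-\sgn(Y)F\geq-\a-\ol{\beta}|Y|-\tfrac{\gamma}{2}\|BZ\|^2$ the $Z$-quadratic terms combine into a nonnegative $\gamma^2e^{2\gamma|Y|}\,Z^\tr\,d\lo M\ro Z$. Rearranging, using $e^{2\gamma|Y|}\geq 1$ and the boundedness of $A$, I arrive at the a priori inequality
\begin{equation*}
\gamma^2\lo\mathfrak{m}\ro_T\leq e^{2\gamma Y^*}\bigl(1+2\gamma\an+2\gamma\ol{\beta}K_AY^*\bigr)-\mathfrak{M}_T,\qquad \mathfrak{M}:=2\gamma\int_0^\cdot e^{2\gamma|Y_s|}\sgn(Y_s)\,d\mathfrak{m}_s,
\end{equation*}
where the local martingale $\mathfrak{M}$ satisfies $\lo\mathfrak{M}\ro_T\leq 4\gamma^2e^{4\gamma Y^*}\lo\mathfrak{m}\ro_T$.

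To finish (ii) I would localise along $\rho_j:=\inf\{t\,|\,\lo\mathfrak{m}\ro_t\geq j\}\wedge T$, so that $\lo\mathfrak{m}\ro_{\rho_j}\leq j$ and---thanks to $Y\in\mathfrak{E}$---every expectation below is finite with $j$-independent constants. Raising the stopped inequality to the power $p/2$ and taking expectations, the drift part is bounded by a constant times $\E[e^{p\gamma Y^*}(1+\an+Y^*)^{p/2}]$, which, after dominating the polynomial by an exponential and using Cauchy--Schwarz with \eqref{AssExpMom}, is controlled by exponential moments of $Y^*$ alone; the martingale term is controlled by Burkholder--Davis--Gundy together with $\lo\mathfrak{M}\ro_T\leq 4\gamma^2e^{4\gamma Y^*}\lo\mathfrak{m}\ro_T$ and Cauchy--Schwarz, giving a term of the form $c\,\E[e^{2p\gamma Y^*}]^{1/2}\E[\lo\mathfrak{m}\ro_{\rho_j}^{p/2}]^{1/2}$. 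Since $\E[\lo\mathfrak{m}\ro_{\rho_j}^{p/2}]\leq j^{p/2}<+\infty$, the elementary implication $x\leq a+b\sqrt{x}\Rightarrow x\leq 2a+b^2$ lets me absorb that term; monotone convergence as $j\to+\infty$ then yields $\E[\lo\mathfrak{m}\ro_T^{p/2}]<+\infty$ with an explicit bound in terms of exponential moments of $Y^*$, and substituting \eqref{normbdsY} (with a generous bookkeeping of constants) produces \eqref{normbds2}. In particular $(Z,N)\in\mathfrak{M}^p\times\mc{M}^p$ for every $p\geq 1$, $N$ being a true martingale once $\lo N\ro_T\in L^{p/2}$, and \eqref{normbdsY} has been re-established along the way.

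The step I expect to be the main obstacle is exactly this self-referential estimate in (ii): the natural consequence of It\^o's formula bounds $\lo\mathfrak{m}\ro_T$ only modulo a stochastic integral against $\mathfrak{m}$ itself, so one cannot take expectations directly. The remedy is the two-part device---localising at $\rho_j$ to force integrability and keep the Burkholder--Davis--Gundy constants independent of $j$, then absorbing the resulting $\E[\lo\mathfrak{m}\ro_{\rho_j}^{p/2}]^{1/2}$ via Young's inequality before passing to the limit. Getting every sign in the expansion of $e^{2\gamma|Y|}$ to be favourable---the $d\lo N\ro$, local-time and $Z$-quadratic contributions simultaneously---which is where the standing hypothesis $\gamma\geq 1$ enters, is the other point requiring care.
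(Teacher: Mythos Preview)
Your proposal is correct and follows the same overall strategy as the paper: for (i), dominate $e^{\gamma|Y_t|}$ by the martingale $\E[\exp(\gamma e^{\beta^*T}(|\xi|+\an))\,|\,\mc{F}_t]$ via \eqref{aposterioriPr} and apply Doob's $L^p$-inequality; for (ii), apply It\^o's formula to an exponential-type function of $|Y|$, localize, use BDG, and absorb the resulting $\E[\lo\mathfrak{m}\ro^{p/2}]^{1/2}$ via Young's inequality before letting the stopping times exhaust $[0,T]$.

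The only noteworthy difference is the choice of test function in (ii). The paper applies It\^o to $v(Y)=\gamma^{-2}\bigl(e^{\gamma|Y|}-1-\gamma|Y|\bigr)$ and exploits the exact identity $\gamma u'-u''=-1$ to get the clean coefficient $-\tfrac12$ in front of $d\lo\mathfrak{m}\ro$, whereas you use $e^{2\gamma|Y|}$ and extract the coefficient $\gamma^2$ by combining $e^{2\gamma|Y|}\geq1$ with $2\gamma-1\geq\gamma$ (both relying on $\gamma\geq1$). The paper then treats $1\leq p<2$ separately by Jensen from the $p=2$ case, while your absorption argument works uniformly in $p$. Both routes yield \eqref{normbds2}; your version is marginally more direct, the paper's choice of $v$ is tidier algebraically.
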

\begin{proof}
(i) Let $(Y,Z,N)$ be the solution to \eqref{BSDEF} obtained in
Theorem \ref{ThmExistPr}. As in the previous section set
\begin{equation}\label{deftiH}
\ti{H}_t:=\exp\!\left(\gamma
e^{\beta^*t}|Y_t|+\gamma\int_0^te^{\beta^*r}\,d\lo\lambda\cdot
M\ro_r\right),
\end{equation}
which is a local submartingale. Moreover, from the estimate
\eqref{aposterioriPr}, Jensen's inequality and the adaptedness
of $\int_0^\cdot e^{\beta^*r}\,d\lo\lambda \cdot M\ro_r$ we deduce that
\begin{align*}
\ti{H}_t&=\Big[\exp(\gamma|Y_t|)\Big]^{e^{\,\beta^*t}}\exp\!\left(\gamma\int_0^t e^{\beta^*r}\,d\lo\lambda\cdot M\ro_r \right)\\
&\leq  \E\!\left[\exp\left(\gamma e^{\beta^*(T-t)}|\xi|+\gamma\int_t^Te^{\beta^*(r-t)}\,d\lo\lambda\cdot M\ro_r \right)\!\Bigg|\,\mc{F}_t\right]^{e^{\,\beta^*t}}\exp\!\left(\gamma\int_0^t e^{\beta^*r}\,d\lo\lambda\cdot M\ro_r \right)\\
&\leq\E\!\left[\exp\!\bigg(\gamma e^{\beta^*T}\Big(|\xi|+|\a|_1\!\Big)\!\bigg)\!\Bigg|\,\mc{F}_t\right].
\end{align*}
Observe that this upper estimate is a uniformly integrable martingale, in particular it is of class D and therefore $\ti{H}$ is a true submartingale. Then, via the Doob maximal inequality, we find that for $p>1$
\begin{align}
\E\!\left[e^{p\gamma
Y^*}\right]\leq\E\!\left[\sup_{\substack{0\leq t\leq
T}}\ti{H}_t^p\right]&\leq\left(\frac{p}{p-1}\right)^p\E[\ti{H}_T^p]\notag\\&\leq\left(\frac{p}{p-1}\right)^p\E\!\left[\exp\!\bigg(p\gamma
e^{\beta^*T}\Big(|\xi|+|\a|_1\!\Big)\!\bigg)\right]\label{DoobY},
\end{align}
provided the right hand side is finite. In particular, $e^{\gamma Y}\in\mc{S}^{p^*}$ and $Y\in\mathfrak{E}$ as soon as $|\xi|+\an$ has exponential moments of all orders, in which case \eqref{normbdsY} holds.\\

(ii) We first verify that \eqref{normbdsY} continues to hold when $(Y,Z,N)$ is a solution to \eqref{BSDEF} (assumed to exist) with
$Y\in\mathfrak{E}$. Observe that we may reformulate the result of Proposition
\ref{apriori0} under the condition that
\begin{equation*}
\exp\!\left(\gamma e^{\beta^*T}|Y_\cdot|+\gamma\int_0^{{}^{\,{\boldsymbol{\cdot}}}} e^{\beta^*r}\,d\lo\lambda\cdot M\ro_r \right)
\end{equation*}
be of class D. Repeating the argument from (i) using \eqref{aposteriori} instead of \eqref{aposterioriPr} leads to the same conclusion, since we have the relation
\begin{equation*}
\exp\!\left(\gamma e^{\beta^*t}|Y_t|+\gamma\int_0^{t}e^{\beta^*r}\,d\lo\lambda\cdot M\ro_r \right)\leq\E\!\left[\exp\!\bigg(\gamma e^{\beta^*T}\Big(Y^*+|\a|_1\!\Big)\!\bigg)\!\Bigg|\,\mc{F}_t\right],
\end{equation*}
so that the right hand side is indeed a process of class D. For the
remaining claim, relation \eqref{normbds2}, define the functions $u, v:\R\to\R_+$ via
$u(x):=\tfrac{1}{\gamma^2}(e^{\gamma x}-1-\gamma x)$ and
$v(x):=u(|x|)$. We have that $v$ is a $\mc{C}^2$ function, so we
use It{\^o}'s formula to see that for a stopping
time $\tau$ (to be chosen later)
\begin{multline*}
v(Y_0)=v(Y_{t\wedge\tau})-\int_0^{t\wedge\tau}u'(|Y_s|)\,{\sgn}^*(Y_s)(Z_s^{\tr}\,dM_s+dN_s)\\
+\int_0^{t\wedge\tau}u'(|Y_s|)\,{\sgn}^*(Y_s)\biggl(F(s,Y_s,Z_s)\,dA_s+\frac{1}{2}\,d\lo N\ro_s\biggr)\\-\frac12\int_0^{t\wedge\tau}u''(|Y_s|)\Big( Z_s^{\tr}\,d\lo
M\ro_sZ_s+d\lo N\ro_s  \Big),
\end{multline*}
where use the notation ${\sgn}^*(x):=-\I{\!\{x\leq
0\!\}}+\I{\!\{x>0\!\}}$ and observe that $u'(0)=0$. Assumption \ref{ass} (iii) yields
\begin{multline*}
v(Y_0)\leq
v(Y_{t\wedge\tau})-\int_0^{t\wedge\tau}u'(|Y_s|)\,{\sgn}^*(Y_s)(Z_s^{\tr}\,dM_s+dN_s)\\+\int_0^{t\wedge\tau}u'(|Y_s|)\Big(\a_s+\a_s\beta|Y_s|\Big)dA_s+\frac12
\int_0^{t\wedge\tau}\Big(\gamma u'(|Y_s|)-u''(|Y_s|)
\Big)Z_s^{\tr}\,d\lo
M\ro_sZ_s\\+\frac12\int_0^{t\wedge\tau}\Big(u'(|Y_s|)\,{\sgn}^*(Y_s)-u''(|Y_s|)\Big)\,d\lo
N\ro_s,
\end{multline*}
since $u'(x)=\tfrac{1}{\gamma}(e^{\gamma x}-1)\geq0$ for $x\geq0$.
Using the relation $\gamma u'(x)-u''(x)=-1$ together with
$\gamma\geq 1$ it follows that
\begin{align}
0\leq v(Y_0)&\leq v(Y_{t\wedge\tau})-\int_0^{t\wedge\tau}u'(|Y_s|)\,{\sgn}^*(Y_s)(Z_s^{\tr}\,dM_s+dN_s)\notag\\
&+\int_0^{t\wedge\tau}u'(|Y_s|)\Big(\a_s+\a_s\beta|Y_s|\Big)\,dA_s-\frac12
\int_0^{t\wedge\tau}Z_s^{\tr}\,d\lo M\ro_sZ_s+d\lo
N\ro_s\label{ForBDG}.
\end{align}
Suppose first that $p\geq2$. Then \eqref{normbds2} can be proved
using the Burkholder-Davis-Gundy inequalities as follows. From
\eqref{ForBDG} we deduce that
\begin{multline*}
\frac12 \int_0^{\tau}Z_s^{\tr}\,d\lo M\ro_sZ_s+d\lo
N\ro_s\leq\frac{1}{\gamma^2}\,e^{\gamma
Y^*}+\frac{1}{\gamma}\int_0^{T}e^{\gamma|Y_s|}\Big(\a_s+\a_s\beta|Y_s|\Big)\,dA_s\\+\sup_{\substack{0\leq
t\leq T}}\left|
\int_0^{t\wedge\tau}u'(|Y_s|)\,{\sgn}^*(Y_s)(Z_s^{\tr}\,dM_s+dN_s)\right|,
\end{multline*}
where we used the estimates $u'(x)\leq e^{\gamma x}/\gamma$ and
$v(x)\leq e^{\gamma x}/\gamma^2$, valid for $x\geq0$. From the
inequalities $y\leq e^{y}-1$ and $\beta\leq \gamma$ we derive
\begin{multline*}
\bigg(\int_0^{\tau}Z_s^{\tr}\,d\lo M\ro_sZ_s+d\lo
N\ro_s\bigg)^{p/2}
\leq2^{\,3p/2-2}\Bigg(\frac{1}{\gamma^p}\,e^{p/2\,\gamma
Y^*}+\frac{1}{\gamma^{p/2}}\,e^{p\gamma
Y^*}\an^{p/2}\\+\sup_{\substack{0\leq t\leq T}}\left|
\int_0^{t\wedge\tau}u'(|Y_s|){\sgn}^*(Y_s)(Z_s^{\tr}\,dM_s+dN_s)\right|^{p/2}\Bigg),
\end{multline*}
which yields, after taking expectation and applying the estimate
$|x|^{p/2}<e^{p/2\,|x|}$ and the Burkholder-Davis-Gundy inequality,
\begin{align*}
\E\Bigg[\bigg(\int_0^{\tau}Z_s^{\tr}\,d\lo M\ro_sZ_s+d\lo
N\ro_s\bigg)^{p/2}\Bigg]&\leq
c_{p,\gamma}\,\E\!\left[e^{p/2\,\gamma
Y^*}+e^{p\gamma
Y^*}e^{p/2\,\gamma\an}\right]\\&+c_{p,\gamma}\,\E\!\left[\left(\int_0^{\tau}e^{2\gamma|Y_s|}\Big(Z_s^{\tr}\,d\lo
M\ro_sZ_s+d\lo N\ro_s\Big)\right)^{p/4}\right]\!,
\end{align*}
where we used the estimate $u'(x)\leq e^{\gamma x}/\gamma$ for $x\geq0$. Note that in the above and in what follows $c_{p,\gamma}>0$ is a generic constant depending on $p$ and $\gamma$ that may change
from line to line. We apply the generalized Young inequality,
$|ab|\leq\frac{\eps}{2}\,a^2+\frac{b^2}{2\eps}$, for $\eps:=1$ and for $\eps:=c_{p,\gamma}$. Then, after an adjustment of $c_{p,\gamma}$,
\begin{align*}
\E\Bigg[\bigg(\int_0^{\tau}&Z_s^{\tr}\,d\lo M\ro_sZ_s+d\lo N\ro_s\bigg)^{p/2}\Bigg]\\
&\leq c_{p,\gamma}\left(\E\!\left[e^{p/2\,\gamma
Y^*}\right]+\frac12\,\E\!\left[e^{2p\gamma
Y^*}\right]+\frac12\,\E\!\left[e^{p\gamma\an}\right]\right)\\&
+c_{p,\gamma}\,\E\!\left[e^{p\gamma
Y^*}\right]+\frac12\,\E\Bigg[\bigg(\int_0^{\tau}Z_s^{\tr}\,d\lo
M\ro_sZ_s+d\lo N\ro_s\bigg)^{p/2}\Bigg]\\&\leq
c_{p,\gamma}\left(\E\!\left[e^{2p\gamma
Y^*}\right]+\E\!\left[e^{2p\gamma\an}\right]\right)+\frac12\,\E\Bigg[\bigg(\int_0^{\tau}Z_s^{\tr}\,d\lo
M\ro_sZ_s+d\lo N\ro_s\bigg)^{p/2}\Bigg].
\end{align*}
Next define, for each integer $n\geq 1$, the stopping time
\begin{align*}
\tau_n:=
&\inf\!\left\{t\in[0,T]\,\bigg|\!\int_0^te^{2\gamma|Y_s|}\Big(Z^\tr_s\,d\lo
M\ro_sZ_s+d\lo N\ro_s\Big)\geq n\right\}\wedge T.
\end{align*}
Inserting $\tau_n$ into the above calculation and using
$e^a+e^b\leq2e^{a+b}$ for $a,b\geq0$ together with \eqref{normbdsY},
we may rewrite the last estimate as
\begin{align}
\E\!\left[\bigg(\!\int_0^{\tau_n}\!Z_s^{\tr}\,d\lo
M\ro_sZ_s+d\lo N\ro_s\!\bigg)^{p/2}\right] \leq
c_{p,\gamma}\,\E\!\left[\exp\!\bigg(\!2p\gamma
e^{\beta^*T}\Big(|\xi|+|\a|_1\!\Big)\!\bigg)\!\right]\!.\label{normbds2-}
\end{align}
By Fatou's lemma, since $\tau_n\to T$ as $n\to+\infty$,
\begin{align*}
\E\!\left[\bigg(\!\int_0^T\!Z_s^{\tr}\,d\lo M\ro_sZ_s+d\lo
N\ro_s\!\bigg)^{p/2}\right] \leq
c_{p,\gamma}\,\E\!\left[\exp\!\bigg(\!2p\gamma
e^{\beta^*T}\Big(|\xi|+|\a|_1\!\Big)\!\bigg)\!\right]\!
\end{align*}
and \eqref{normbds2} follows. In the situation where $p<2$,
$q:=2/p>1$ and we may combine Jensen's inequality with
\eqref{normbds2-}, which is valid for $p=2$, to get
\begin{multline*}
\E\Bigg[\bigg(\int_0^{\tau_n}Z_s^{\tr}\,d\lo M\ro_sZ_s+d\lo
N\ro_s\bigg)^{p/2}\Bigg]^q\\\leq
\E\Bigg[\!\int_0^{\tau_n}\!Z_s^{\tr}\,d\lo M\ro_sZ_s+d\lo
N\ro_s\!\Bigg]\leq
c_{2,\gamma}\,\E\!\left[\exp\!\!\bigg(\!4\gamma
e^{\beta^*T}\Big(|\xi|+|\a|_1\Big)\!\bigg)\!\right]
\end{multline*}
from which \eqref{normbds2} follows after another application of
Fatou's lemma together with the fact that the right hand side in the
inequality above is greater or equal one while $1/q=p/2<1$.
\end{proof}
\begin{rmk}
We point out that the results of this section do not require that
$F$ be convex in $z$, but only that $F$ be continuous in $(y,z)$. The reader may have noticed that the continuity of $F$ is not used directly in the proofs. However in Theorem \ref{ThmExistPr} we rely on the results of \cite{Mo09} where continuity is a technical condition needed for an application of Dini's theorem. In addition our results also apply to the BSDE \eqref{BSDEF-} if $g$ is identically
equal to a nonzero constant $\gamma_g/2$, in which case we assume without loss of generality
that $\gamma\geq |\gamma_g|$.
\end{rmk}


\section{Uniqueness}\label{Uniq}
We now provide a comparison theorem that yields uniqueness of the
BSDE triple. The proof makes use of the $\theta$-technique applied in the context  
of second order Bellman-Isaacs equations by Da Lio and Ley \cite{DL06} and subsequently
adapted to the framework of Brownian BSDEs in \cite{BH08}.

\begin{thm}[Comparison Principle]\label{comp} Let $(Y,Z,N)$ and $(Y',Z',N')$ be solutions to the BSDE \eqref{BSDEF} with drivers $F$ and $F'$ and terminal conditions $\xi$ and $\xi'$, respectively. Suppose in addition that $Y\in\mathfrak{E}$ and $Y'\in\mathfrak{E}$.
If $\mb{P}$-a.s.
\[\xi\leq\xi'\quad\text{and}\quad F(t,y,z)\leq F'(t,y,z) \text{ for all }(t,y,z)\in[0,T]\times\R\times\R^d\]
and if $(F,\xi)$ satisfies Assumption \ref{ass} (i)-(iii) then
$\mb{P}$-a.s. for each $t\in[0,T]$
\[Y_t\leq Y'_t.\]
\end{thm}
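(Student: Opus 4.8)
The plan is to adapt the $\theta$-technique of Da Lio--Ley and Briand--Hu to the semimartingale setting. Fix $\theta\in(0,1)$ and set
\[
U^\theta:=\frac{Y-\theta Y'}{1-\theta},\qquad \zeta^\theta:=\frac{Z-\theta Z'}{1-\theta},\qquad L^\theta:=\frac{N-\theta N'}{1-\theta},
\]
so that $U^\theta$ is a continuous semimartingale with martingale part $\zeta^\theta\cdot M+L^\theta$, where $L^\theta$ is a continuous local martingale orthogonal to $M$, and $U^\theta_T=\xi'+\tfrac{\xi-\xi'}{1-\theta}\le\xi'$. Since $U^\theta_t\to+\infty$ as $\theta\uparrow1$ on $\{Y_t>Y'_t\}$, it is enough to bound $U^\theta$ from above by a quantity that stays finite as $\theta\uparrow1$; continuity of $Y$ and $Y'$ then yields $Y_t\le Y'_t$ for all $t$, $\mb{P}$-a.s.

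The key step is the use of convexity. Writing $Z=\theta Z'+(1-\theta)\zeta^\theta$, convexity of $F$ in $z$ gives $F(t,Y_t,Z_t)\le\theta F(t,Y_t,Z'_t)+(1-\theta)F(t,Y_t,\zeta^\theta_t)$; combining this with the hypothesis $F\le F'$, the Lipschitz estimate \eqref{AssLipF} applied to $F(t,Y_t,Z'_t)-F(t,Y'_t,Z'_t)$, the growth estimate \eqref{Ftyzbetas} applied to $F(t,Y_t,\zeta^\theta_t)$, and the identity $|Y_t-Y'_t|=(1-\theta)|U^\theta_t-Y'_t|$, one finds that the generator of $U^\theta$ is dominated by $\ol{\beta}|U^\theta_t-Y'_t|+\ell_t+\tfrac{\gamma}{2}\|B_t\zeta^\theta_t\|^2$, where $\ell_t:=\a_t+\ol{\beta}(|Y_t|+|Y'_t|)$ satisfies $\int_0^T\ell_t\,dA_t\le\an+\ol{\beta}K_A\bigl(Y^*+(Y')^*\bigr)$, a bound which has exponential moments of all orders and is independent of $\theta$. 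Expanding $\lo N\ro-\theta\lo N'\ro$ in terms of $\lo N\ro$ and the martingale parts, one thus obtains a ``subsolution'' inequality for $U^\theta$ in which the only quadratic term is $\tfrac{\gamma}{2}\|B_t\zeta^\theta_t\|^2\,dA_t$.

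To remove this term one applies It{\^o}'s formula to $e^{\gamma U^\theta}$: the contribution $\tfrac{\gamma^2}{2}e^{\gamma U^\theta}d\lo\zeta^\theta\cdot M\ro$ cancels it exactly, and since $\gamma\ge1$ the surviving $d\lo L^\theta\ro$-contribution is nonnegative. One is left with covariation terms involving $N$ and $N'$ (they vanish when $\gamma=1$); these are absorbed by multiplying $e^{\gamma U^\theta}$ by a stochastic exponential $\mc{E}\bigl(aN+b(N'-N)\bigr)$, with $a,b$ chosen so that after forming $\hat U^\theta:=e^{\gamma U^\theta}\mc{E}\bigl(aN+b(N'-N)\bigr)$ all non-sign-definite finite-variation terms cancel. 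A localization at the reducing times of the local-martingale part, together with the estimate above and a Gronwall (integrating-factor) argument for the $\ol{\beta}|U^\theta-Y'|$ term, then yields a bound of the form $e^{\gamma U^\theta_t}\le c\,\E\!\left[e^{\gamma\xi'}\cdot(\cdots)\,\big|\,\mc{F}_t\right]$ whose right-hand side remains finite as $\theta\uparrow1$; it is here that the exponential integrability of $|\xi|+\an$ and of $Y^*,(Y')^*$ is used. Letting $\theta\uparrow1$ along a sequence finishes the proof.

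The main obstacle is passing through the exponential transform in the semimartingale framework: unlike the Brownian case of \cite{BH08}, the orthogonal components $N,N'$ produce covariation terms that are not sign-definite, and one must verify that the auxiliary stochastic exponential simultaneously absorbs all of them while leaving a tractable, essentially linear remainder; a secondary but delicate point is controlling the resulting estimates uniformly in $\theta$ so that the limit $\theta\uparrow1$ can legitimately be taken.
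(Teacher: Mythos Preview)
Your overall strategy --- the $\theta$-technique combined with an exponential transform to kill the quadratic term --- is the same as the paper's. The genuine gap lies in your handling of the orthogonal martingale pieces. You assert that after applying It{\^o} to $e^{\gamma U^\theta}$ there remain ``covariation terms involving $N$ and $N'$'' which you propose to absorb by multiplying with an auxiliary stochastic exponential $\mc{E}(aN+b(N'-N))$; you never exhibit $a,b$, never check that the resulting finite-variation part is sign-definite, and never verify that the new factor has moments that are controllable uniformly in $\theta$ (this last point is essential for the limit $\theta\uparrow1$). You yourself flag this as ``the main obstacle'' --- which means the proof is, as written, incomplete.

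The good news is that the auxiliary exponential is simply unnecessary. The paper groups the $d\lo N\ro$-type contributions as a single finite-variation process and shows directly, via the Kunita--Watanabe and Young inequalities, that it has the right sign. Concretely, with $W:=N-\theta N'$ one has, for every $r\le u$,
\[
\int_r^u d\lo W\ro_s\;\ge\;(1-\theta)\int_r^u\bigl(d\lo N\ro_s-\theta\,d\lo N'\ro_s\bigr),
\]
and since $\gamma\ge1$ this is exactly what is needed to make the combined $d\lo N\ro$, $d\lo N'\ro$, $d\lo N,N'\ro$ contribution nonpositive in the submartingale calculation. Equivalently, in your normalization the coefficients of $d\lo N\ro$, $d\lo N,N'\ro$, $d\lo N'\ro$ arising from $\tfrac{\gamma^2}{2}d\lo L^\theta\ro-\tfrac{\gamma}{2(1-\theta)}(d\lo N\ro-\theta\,d\lo N'\ro)$ already define a positive-semidefinite quadratic form for all $\gamma\ge1$ and $\theta\in(0,1)$; the discriminant computes to $\theta(\gamma-1)(1-\theta)^2\ge0$. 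So there is nothing left to ``absorb'': replace your stochastic-exponential device by this elementary inequality and the remainder of your outline (integrating factor for the Lipschitz term, localization, passing to the limit $\theta\uparrow1$) goes through exactly as in the paper.
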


\begin{proof}
Let $\theta$ be a real number in $(0,1)$ and set $U:=Y-\theta Y'$,
$V:=Z-\theta Z'$ and $W:=N-\theta N'$. Consider the process
\[ \rho_s:=
\begin{cases}\, \frac{F(s,Y_s,Z_s)-F(s,\theta Y_s',Z_s)}{U_s}& \text{if }U_s \neq 0, \\
    \,  \ol{\beta}& \text{if }U_s= 0.
\end{cases}\]
By Assumption \ref{ass} (ii), $\rho$ is bounded by $\ol{\beta}$ and
we define $R_s:=\int_0^s\rho_r\,dA_r$. Notice that by the boundedness of $A$ we have that $|R|\leq\ol{\beta}\,A_T\leq
\ol{\beta}K_A$. From It{\^o}'s formula we deduce
\begin{multline*}
e^{R_t}U_t=e^{R_T}U_T-\int_t^Te^{R_s}(V_s^\tr\,dM_s+dW_s)+\int_t^Te^{R_s}\bigg(F^\theta_s\,dA_s+\frac12\Big(d\lo
N\ro_s-\theta\,d\lo N'\ro_s\Big)\bigg),
\end{multline*}
where we define $F^\theta_s:=F(s,Y_s,Z_s)-\theta
F'(s,Y'_s,Z_s')-\rho_sU_s$. We also set \[\Delta
F(s):=(F-F')(s,Y_s',Z_s')\leq 0\] and observe that from the
convexity of $F$ in $z$ together with \eqref{Ftyzbetas} we get
\begin{align}
F(s,Y'_s,&Z_s)-\theta F(s,Y_s',Z'_s)= F\!\left(s,Y_s',\theta
Z_s'+(1-\theta)\frac{Z_s-\theta Z_s'}{1-\theta}\right)-\theta
F(s,Y_s',Z'_s)\notag\\&\leq (1-\theta)\,F\!\left(s,Y_s',\frac{Z_s-\theta
Z_s'}{1-\theta}\right) \leq(1-\theta)\a_s+
(1-\theta)\ol{\beta}|Y_s'|+\frac{\gamma}{2(1-\theta)}\,\|B_sV_s\|^2.\label{FF1}
\end{align}
Another application of the Lipschitz assumption \ref{ass} (ii),
yields
\begin{align}
F(s,Y_s,Z_s)-F(s,Y_s',Z_s)&= F(s,Y_s,Z_s)-F(s,\theta
Y_s',Z_s)+F(s,\theta Y'_s,Z_s)-F(s, Y_s',Z_s)\notag\\&=
\rho_sU_s+F(s,\theta Y'_s,Z_s)-F(s, Y_s',Z_s)\notag\\&\leq
\rho_sU_s+(1-\theta)\ol{\beta}|Y_s'|.\label{FF2}
\end{align}
Combining \eqref{FF1} and \eqref{FF2} we see that
\begin{align}
F^\theta_s&=F(s,Y_s,Z_s)-\theta F(s,Y_s',Z'_s)+\theta\, \Delta F(s)-\rho_sU_s\notag\\
&=[F(s,Y_s,Z_s)-F(s,Y_s',Z_s)]+[F(s,Y'_s,Z_s)-\theta F(s,Y_s',Z'_s)]+\theta\, \Delta F(s)-\rho_sU_s\notag\\
&\leq
(1-\theta)\Big(\a_s+2\ol{\beta}|Y_s'|\Big)+\frac{\gamma}{2(1-\theta)}\,\|B_sV_s\|^2+\theta
\,\Delta F(s).\label{Fthbdd}
\end{align}
Let $\kappa:=\frac{\gamma
\exp\left(\ol{\beta}K_A\right)}{1-\theta}>0$ and
$P_t:=\exp\!\left(\kappa e^{R_t}U_t\right)>0$. The logic is now
similar to how we derived the a priori estimates, namely to show
that, by removing an appropriate drift, $P$ is a (local)
submartingale. By It{\^o}'s formula, for $t\in[0,T]$,
\begin{align}
P_t&=P_T-\int_t^T\kappa P_s
e^{R_s}(V_s^\tr\,dM_s+dW_s)+\int_t^T\kappa P_s
e^{R_s}\left(F^\theta_s-\frac{\kappa
e^{R_s}}{2}\,\|B_sV_s\|^2\right)\,dA_s\label{Ftheta}\\&+\int_t^T\kappa
P_s e^{R_s}\bigg(-\frac{\kappa e^{R_s}}{2}\,d\lo
W\ro_s+\frac12\Big(d\lo N\ro_s-\theta\,d\lo
N'\ro_s\Big)\bigg)\label{gsN}.
\end{align}
To simplify notation set
\begin{gather}
G:=\kappa P e^{R}\left(F^\theta-\frac{\kappa
e^{R}}{2}\,\|BV\|^2\right)\label{GdA} \quad\text{and}\\
H:=\int_0^{{}^{\,\boldsymbol{\cdot}}}\kappa P_s e^{R_s}\bigg(-\frac{\kappa
e^{R_s}}{2}\,d\lo W\ro_s+\frac12\Big(d\lo N\ro_s-\theta\,d\lo
N'\ro_s\Big)\bigg).\label{HdA}
\end{gather}
Let us first investigate the finite variation process $H$. We
claim that $H$ is decreasing, indeed for all $r, u\in[0,T]$, $r\leq u$,
we have
\[\int_r^ud\lo W\ro_s=\int_r^ud\lo N\ro_s-2\theta\,d\lo N,N'\ro_s+\theta^2\,d\lo N'\ro_s.\]
Applying the Kunita-Watanabe and Young inequalities,
\begin{align*}
\int_r^ud\lo W\ro_s&\geq\int_r^ud\lo
N\ro_s+\int_r^u\theta^2\,d\lo
N'\ro_s-2\theta\left(\int_r^ud\lo
N\ro_s\right)^{1/2}\left(\int_r^ud\lo
N'\ro_s\right)^{1/2}\\&\geq\int_r^ud\lo
N\ro_s+\int_r^u\theta^2\,d\lo
N'\ro_s-\theta\left(\int_r^ud\lo N\ro_s+\int_r^ud\lo
N'\ro_s\right)\\&=(1-\theta)\left(\int_r^ud\lo
N\ro_s-\theta\,d\lo N'\ro_s\right).
\end{align*}
In particular, since $\gamma\geq1$ and $|R|\leq\ol{\beta}K_A$ we have,
\[\int_r^u\kappa e^{R_s}d\lo W\ro_s\geq\frac{\gamma}{1-\theta}\int_r^ud\lo W\ro_s\geq\int_r^ud\lo N\ro_s-\theta\,d\lo N'\ro_s,\]
which shows that the process $H$ is decreasing and hence the integral in
\eqref{gsN} is nonpositive.

Next we consider the finite variation integral in \eqref{Ftheta}.
Combining \eqref{Fthbdd}, $\Delta F\leq 0$ and the boundedness of $R$ we have
\begin{equation}
G=\kappa P e^{R}\left(F^\theta-\frac{\kappa
e^{R}}{2}\,\|BV\|^2\right)
\leq\kappa P
e^{R}\left((1-\theta)\Big(\a+2\ol{\beta}|Y'|\Big)\right)\leq
PJ\label{GPJ},
\end{equation}
where \[ J:=\gamma
e^{2\ol{\beta}K_A}\Big(\a+2\ol{\beta}|Y'|\Big)\geq0.\] We set
\[D_t:=\exp\left(\int_0^tJ_s\,dA_s\right)\quad\text{ and
}\quad\ti{P}_t:=D_tP_t.\] Partial integration yields
\begin{align}
d\ti{P}_t&=D_t\Big(\!-G_t\,dA_t-dH_t+\kappa
P_te^{R_t}\!\left(V_t^{\tr}\,dM_t+dW_t\right)\!\Big)+P_t D_t
J_t\,dA_t\notag\\&=D_t\Big((P_tJ_t-G_t)\,dA_t-dH_t+\kappa
P_te^{R_t}\!\left(V_t^{\tr}\,dM_t+dW_t\right)\!\Big)\label{Partial}
\end{align}
and we conclude that the finite variation parts in the latter
expression are nonnegative. We can now use the following stopping
time argument to derive
\begin{equation}
P_t\leq\E\!\left[\frac{D_T}{D_t}\,P_T\bigg|\,\mc{F}_t\right].
\label{PtDtDT}
\end{equation}
Namely, consider the stopping time
\begin{align*}
\tau_n:=
\inf\!\left\{u\in[t,T]\,\bigg|\!\int_t^u\!\kappa^2\ti{P}^2_s\,e^{2R_s}\Big(V^\tr_s\,d\lo
M\ro_sV_s+d\lo W\ro_s\Big)\geq n\right\}\wedge T,
\end{align*}
where $n\geq 1$ is an integer. Observe that $\tau_n\to T$ as $n\to+\infty$ due to the integrability assumptions on $\a$, $Y$ and $Y'$, as well as the boundedness of $A$. Then \eqref{Partial} provides the
estimate
\begin{align*}
P_t\leq\E\!\left[\exp\!\left(\int_t^{\tau_n}\!J_s\,dA_s\right)P_{\tau_n}\bigg|\,\mc{F}_t\right]
=\E\!\left[\exp\!\left(\int_t^{\tau_n}\!\gamma
e^{2\ol{\beta}K_A}\Big(\a_s+2\ol{\beta}|Y'_s|\Big)\,dA_s\right)\!P_{\tau_n}\bigg|\,\mc{F}_t\right].
\end{align*}
In view of the current integrability and boundedness assumptions we can send $n$ to infinity and deduce
\eqref{PtDtDT}.

Notice that we also have
$\xi-\theta\xi'\leq(1-\theta)|\xi|+\theta\Delta\xi$, where
$\Delta\xi:=\xi-\xi'\leq0$. Then together with the definition of $P$ \eqref{PtDtDT} shows that
\begin{align*}
\exp\!\Bigg(\frac{\gamma
e^{\ol{\beta}K_A+R_t}}{1-\theta}\,&\left(Y_t-\theta
Y'_t\right)\Bigg)\\&
\leq\E\Bigg[\exp\!\left(\int_t^T\!\gamma
e^{2\ol{\beta}K_A}\Big(\a_s+2\ol{\beta}|Y'_s|\Big)\,dA_s\right)
\exp\!\left(\kappa
e^{R_T}\!\left(\xi-\theta\xi'\right)\right)\!\Bigg|\,\mc{F}_t\Bigg]\\&\leq\E\Bigg[\exp\!\left(\gamma
e^{2\ol{\beta}K_A}\int_t^T
\Big(\a_s+2\ol{\beta}|Y'_s|\Big)\,dA_s\right)\exp\!\left( \gamma
e^{2\ol{\beta}K_A}|\xi|\right)\!\Bigg|\,\mc{F}_t\Bigg].
\end{align*}
Thus, we can derive the estimate
\begin{equation*}
Y_t-\theta
Y'_t\leq\frac{1-\theta}{\gamma}\log\E\Bigg[\exp\!\left(\gamma
e^{2\ol{\beta}K_A}\left(|\xi|+\int_t^T
\Big(\a_s+2\ol{\beta}|Y'_s|\Big)\,dA_s\right)\right)\!\Bigg|\,\mc{F}_t\Bigg],
\end{equation*}
which follows from the above by checking the cases $Y_t-\theta
Y'_t\geq0$ and $Y_t-\theta
Y'_t<0$ separately, noting that $R+\ol{\beta}K_A\geq0$.
Once again, by the integrability assumptions on $\xi$, $\a$ and
$Y'$ and the boundedness of $A$, the conditional expectation on
the right hand side is finite, $\mb{P}$-a.s. Taking $\theta
\uparrow 1$ then gives $Y_t\leq Y'_t$ and the continuity of $Y$ and $Y'$ yields the claim. 
\end{proof}

The following corollary is then immediate.

\begin{cor}[Uniqueness]\label{ExistUniq}
\hspace{-2.5mm}Let Assumption \ref{ass} (i)-(iii) hold and
let $(Y,Z,N)$ and $(Y',Z',N')$ be two solutions to the BSDE
\eqref{BSDEF} with $Y\in\mathfrak{E}$ and
$Y'\in\mathfrak{E}$. Then $Y$ and $Y'$, $Z\cdot M$ and $Z'\cdot M$, and $N$ and $N'$ are indistinguishable. In addition $(Z\cdot M,N)$ and  $(Z'\cdot M,N')$ both belong to $\mc{M}^p\times\mc{M}^p$ for all $p\geq1$.
\end{cor}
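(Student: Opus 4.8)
The plan is to obtain Corollary~\ref{ExistUniq} as a direct consequence of the comparison principle (Theorem~\ref{comp}) together with the norm bounds from Corollary~\ref{Cornormbds}(ii). First I would establish the indistinguishability of the $Y$-components. Since $(Y,Z,N)$ and $(Y',Z',N')$ are both solutions to BSDE$(F,\xi)$ with $Y,Y'\in\mathfrak{E}$, and since Assumption~\ref{ass}(i)-(iii) holds, I apply Theorem~\ref{comp} twice: once with the roles $(Y,Z,N)$ versus $(Y',Z',N')$ and the (trivially true) hypotheses $\xi\leq\xi$ and $F\leq F$, giving $Y_t\leq Y'_t$ for all $t\in[0,T]$ $\mb{P}$-a.s.; and once with the roles reversed, giving $Y'_t\leq Y_t$. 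Combining these and using the $\mb{P}$-a.s.\ continuity of $Y$ and $Y'$ yields that $Y$ and $Y'$ are indistinguishable.

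Next I would deduce the indistinguishability of the martingale parts. With $Y\equiv Y'$, subtracting the two versions of BSDE~\eqref{BSDEF} gives
\begin{equation*}
\int_t^T (Z_s-Z'_s)^\tr\,dM_s+\int_t^T d(N_s-N'_s)=\frac12\int_t^T d\lo N\ro_s-\frac12\int_t^T d\lo N'\ro_s,
\end{equation*}
so the continuous local martingale $(Z-Z')\cdot M+(N-N')$ equals a continuous finite variation process and is therefore identically zero (it starts at $0$ at time $T$ reading backwards, equivalently at $0$, and a continuous local martingale of finite variation is constant). Taking brackets against each $M^i$ and using the orthogonality of $N$ and $N'$ to $M$ shows $(Z-Z')\cdot M\equiv 0$, whence $N-N'\equiv0$ as well; thus $Z\cdot M$ and $Z'\cdot M$, and $N$ and $N'$, are indistinguishable. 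Alternatively one may invoke the orthogonality directly: $\lo (Z-Z')\cdot M+(N-N')\ro_T = \int_0^T(Z_s-Z'_s)^\tr\,d\lo M\ro_s(Z_s-Z'_s)+\lo N-N'\ro_T = 0$ since the bracket of a null process vanishes, giving both conclusions at once.

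Finally, the membership $(Z\cdot M,N)\in\mc{M}^p\times\mc{M}^p$ for all $p\geq1$ follows from Corollary~\ref{Cornormbds}(ii): indeed Assumption~\ref{ass}(i)-(iii) is in force and, by hypothesis, $(Y,Z,N)$ is a solution with $Y\in\mathfrak{E}$; it remains only to note that Assumption~\ref{ass}(v) is also available here, since by Assumption~\ref{ass}(i) $|\xi|+|\a|_1$ has exponential moments of all orders so that the bound \eqref{normbds2} applies and yields finite $p$-th moments of $\big(\int_0^T Z_s^\tr\,d\lo M\ro_sZ_s+d\lo N\ro_s\big)^{1/2}$ for every $p\geq1$; splitting this bracket into the $Z\cdot M$ and $N$ parts (using orthogonality once more) gives $Z\in\mathfrak{M}^p$ and $N\in\mc{M}^p$, and likewise for $Z'$ and $N'$. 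The main (very mild) obstacle is bookkeeping: checking that the hypotheses of Theorem~\ref{comp} and of Corollary~\ref{Cornormbds}(ii) are literally met in both directions, and handling the $\beta=0$ versus $\beta>0$ cases of Assumption~\ref{ass}(v) so that the a priori estimates invoked in those results are legitimate; none of this requires new ideas beyond what has already been proved.
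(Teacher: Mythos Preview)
Your approach mirrors the paper's: apply Theorem~\ref{comp} twice to obtain $Y\equiv Y'$, and invoke Corollary~\ref{Cornormbds}(ii) for the $\mc{M}^p$-membership. The only real difference is in how you show the martingale parts coincide. The paper applies It\^{o}'s formula to $(Y-Y')^2$: since $Y\equiv Y'$, all terms vanish except the quadratic variation, yielding directly
\[
0=\int_0^T(Z_t-Z_t')^\tr\,d\lo M\ro_t(Z_t-Z_t')+\lo N-N'\ro_T,
\]
from which $Z\cdot M\equiv Z'\cdot M$ and $N\equiv N'$. Your route via ``continuous local martingale equals finite variation, hence constant'' is equally valid and arguably more elementary, but your displayed equation is missing the term $\int_t^T\bigl(F(s,Y_s,Z_s)-F(s,Y_s,Z_s')\bigr)\,dA_s$ on the right-hand side (the generator depends on $Z$, and $Z=Z'$ is not yet known); this is harmless since the right side is still of finite variation, but it should be written correctly.

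One further remark: your sentence ``Assumption~\ref{ass}(v) is also available here, since by Assumption~\ref{ass}(i) \ldots'' is not a valid justification --- item (i) says nothing about (v). The paper itself invokes Corollary~\ref{Cornormbds}(ii) (which formally requires (v)) while stating Corollary~\ref{ExistUniq} under only (i)--(iii), so this is a looseness inherited from the paper rather than one you introduced; but your attempt to paper it over is not correct reasoning.
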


\begin{proof}
By Theorem \ref{comp} and Corollary \ref{Cornormbds} (ii) only the
assertion regarding the indistinguishability of the martingale part remains. It{\^o}'s formula gives $\mb{P}$-a.s.
\begin{multline*}
0=(Y_T-Y_T')^2=(Y_0-Y_0')^2+2\int_0^T(Y_t-Y_t')\,d(Y_t-Y_t')\\+\int_0^T(Z_t-Z'_t)^\tr\,d\lo M\ro_t(Z_t-Z'_t)+d\lo N-N'\ro_t\\
=\int_0^T(Z_t-Z'_t)^\tr\,d\lo M\ro_t(Z_t-Z'_t)+d\lo N-N'\ro_t,
\end{multline*}
from which $Z\cdot M\equiv Z'\cdot M$ and $N\equiv N'$.
\end{proof}



\section{Stability}\label{Stab}
It follows from the previous results that the BSDE \eqref{BSDEF}
has a unique solution in $\mathfrak{E}$ under appropriate
Lipschitz and convexity assumptions on the driver $F$ and an
exponential moments condition on the terminal value $\xi$ and
process $\a$. In the present section we show that a stability
result for such BSDEs also holds. More precisely, given a
sequence of terminal values and a sequence of drivers such that
the exponential moments condition is fulfilled uniformly, and such that they both converge to a fixed terminal value and a
fixed generator in a suitable sense, then we gain convergence on
the level of the respective BSDE solutions. 
\begin{thm}[Stability]\label{thmstab}
Let $(F^n)_{n\geq0}$ be a sequence of generators for the BSDE
\eqref{BSDEF} such that Assumption \ref{ass} (ii)-(iii) and (v) hold for each $F^n$ with the set of parameters
$(\a^n,\beta^n,\ol{\beta},\beta_f,\gamma)$. If $(\xi^n)_{n\geq 0}$
are the associated random terminal values then suppose that, for
each $p>0$,
\begin{equation}
\sup_{\substack{n\geq 0}}\E\!\left[\,e^{p\,(|\xi^n|+|\a^n|_1)}\right]<+\infty.\label{supxin}
\end{equation}
Let $(Y^n,Z^n,N^n)$ be the solution to the BSDE \eqref{BSDEF} with
driver $F^n$ and terminal condition $\xi^n$ such that
$Y^n\in\mathfrak{E}$ for all $n\geq0$. If
\begin{equation}
|\xi^n-\xi^0| + \int_0^T\big|F^n-F^0\big|\,(s,Y_s^0,Z_s^0)\,dA_s\longrightarrow 0 \quad\text{ in
probability, as }n\to+\infty,\label{xiFninprob}
\end{equation}
then for each $p>0$,
\begin{gather*}
\lim_{n\to+\infty}\E\Bigg[\!\!\left(\exp\!\left(\sup_{\substack{0\leq t\leq T}}\big|Y_t^n-Y_t^0\big|\right)\!\right)^p\Bigg]= 1\quad\text{and}\\
\lim_{n\to+\infty}\E\Bigg[\!\!\left(\int_0^T(Z_s^n-Z_s^0)^{\tr}\,d\lo
M\ro_s(Z_s^n-Z_s^0)+d\lo
N^n-N^0\ro_s\right)^{p/2}\Bigg]= 0.
\end{gather*}
\end{thm}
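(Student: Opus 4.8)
The plan is to adapt the $\theta$-technique from the comparison theorem (Theorem \ref{comp}) to the stability setting, exploiting that the uniform exponential moments bound \eqref{supxin} together with the norm estimates from Corollary \ref{Cornormbds} give control on all the $Y^n$ and their martingale parts that is \emph{uniform in $n$}. First I would fix $\theta \in (0,1)$ and, mimicking the computation in the proof of Theorem \ref{comp}, derive a two-sided estimate: applying the argument there with the pair $(Y^n,Z^n,N^n)$ playing the role of $(Y,Z,N)$ and $(Y^0,Z^0,N^0)$ the role of $(Y',Z',N')$ (and then swapping their roles) yields, for each $t$,
\begin{equation*}
Y^n_t - \theta Y^0_t \leq \frac{1-\theta}{\gamma}\log\E\!\left[\exp\!\left(\gamma e^{2\ol{\beta}K_A}\Big(\mc{D}^n + \int_t^T\big(\a^0_s + 2\ol{\beta}|Y^0_s|\big)\,dA_s\Big)\right)\Big|\,\mc{F}_t\right],
\end{equation*}
where $\mc{D}^n := |\xi^n - \xi^0| + \int_0^T|F^n-F^0|(s,Y^0_s,Z^0_s)\,dA_s$ is the quantity assumed in \eqref{xiFninprob} to go to zero in probability, plus a symmetric estimate with the roles reversed. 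Writing $Y^n_t - Y^0_t = (Y^n_t - \theta Y^0_t) - (1-\theta)Y^0_t$ and then letting $\theta\uparrow 1$ in a suitable way (coupling $1-\theta$ to the size of $\mc{D}^n$, as in \cite{BH08}) should produce a bound of the form $\sup_t|Y^n_t - Y^0_t| \leq C(\text{something tending to }0)$ modulo terms controlled by the exponential moments.

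The key steps, in order, are: (1) establish the uniform a priori bounds --- by Corollary \ref{Cornormbds}, $\sup_n\E[e^{p\gamma (Y^n)^*}]<\infty$ and $\sup_n\|(Z^n,N^n)\|_{\mathfrak{M}^p\times\mc{M}^p}<\infty$ for every $p$, using \eqref{supxin}; (2) run the $\theta$-argument above to get the pathwise/conditional-expectation estimate on $Y^n-\theta Y^0$ and its reverse; (3) convert ``$\mc{D}^n\to 0$ in probability'' plus uniform exponential integrability of $e^{c\mc{D}^n}$ (which follows from \eqref{supxin} since $\mc{D}^n \leq |\xi^n|+|\xi^0| + \text{growth terms in }\a^n,\a^0,(Y^0)^*$ via \eqref{Ftyzbetas}) into $L^p$-convergence of the relevant exponential expressions, e.g. by a uniform-integrability / Vitali argument; (4) optimize over $\theta = \theta_n\uparrow 1$ to conclude $\E[\exp(p\sup_t|Y^n_t-Y^0_t|)]\to 1$; (5) feed this into an It\^o/BDG computation for $(Y^n-Y^0)^2$, exactly as in the proof of Corollary \ref{ExistUniq} but keeping the driver-difference and terminal-difference terms, to get
\begin{equation*}
\E\!\left[\Big(\int_0^T (Z^n_s-Z^0_s)^\tr\,d\lo M\ro_s(Z^n_s-Z^0_s) + d\lo N^n-N^0\ro_s\Big)^{p/2}\right] \longrightarrow 0,
\end{equation*}
using Young's inequality to absorb the quadratic term and the $\mathfrak{M}^p$-bounds to handle the cross terms, together with the convergence of $\sup_t|Y^n_t-Y^0_t|$ already obtained.

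The main obstacle I expect is step (3)–(4): turning convergence in probability of $\mc{D}^n$ into the $L^p$ convergence $\E[\exp(p\sup_t|Y^n-Y^0|)]\to 1$ while the exponent also contains the genuinely unbounded pieces $\int_t^T(\a^0_s + 2\ol{\beta}|Y^0_s|)\,dA_s$ and $|\xi|$-type terms. These do not vanish, so one cannot simply take $\theta\to 1$ naively: the prefactor $e^{\gamma e^{2\ol{\beta}K_A}(\cdots)/( \text{after }\theta\to1)}$ must be shown to stay integrable uniformly, which is where the room created by the exponential moments of \emph{all} orders in \eqref{supxin} is essential (one needs exponents strictly larger than those appearing, to run a de la Vall\'ee-Poussin / uniform integrability argument and to split via H\"older between the ``small in probability'' factor $e^{c\mc{D}^n}-1$ and the ``uniformly integrable'' factor). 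Getting the bookkeeping of orders right --- i.e. verifying that the order of exponential moment demanded is genuinely covered by ``all orders'' and that the $\theta_n\uparrow 1$ limit is uniform in $n$ --- is the delicate part; the It\^o/BDG step (5) is then essentially routine given the uniform $\mathfrak{M}^p\times\mc{M}^p$ bounds and the already-established convergence of the $Y$-component.
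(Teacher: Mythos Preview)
Your overall architecture---uniform a priori bounds from Corollary~\ref{Cornormbds}, a two-sided $\theta$-estimate mimicking the comparison proof, then It\^o on $(Y^n-Y^0)^2$ for the martingale parts---is exactly the paper's route, and step~(5) is carried out essentially as you describe.

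Where your plan deviates is precisely at the point you flag as delicate, steps~(3)--(4). You propose to keep the logarithmic form of the $\theta$-estimate and then couple $\theta=\theta_n\uparrow 1$ to the size of $\mc{D}^n$. The paper avoids this coupling altogether by two simple tricks. First, it reduces \emph{at the outset} to proving convergence in probability of $\sup_t|Y^n_t-Y^0_t|$ and of the bracket, using Vitali and the uniform exponential bounds; this removes the need to control $L^p$ norms directly in the $\theta$-step. Second, inside the $\theta$-argument it replaces $\log P_t$ by $P_t$ via the crude bound $\log x\le x$, yielding a \emph{linear} conditional-expectation estimate of the form
\[
|Y^n_t-Y^0_t|\le (1-\theta)\bigl(|Y^0_t|+|Y^n_t|\bigr)+\tfrac{1-\theta}{\gamma}\,\E\bigl[D^n_T\chi^n(\theta)\,\big|\,\mc F_t\bigr]
+e^{2\ol\beta K_A}\,\E\Bigl[D^n_T\Upsilon^n(\theta)\!\int_0^T|\Delta^nF(s)|\,dA_s\,\Big|\,\mc F_t\Bigr].
\]
Only the last term carries the dangerous factor $1/(1-\theta)$ (hidden in $\Upsilon^n(\theta)$), and that term is multiplied by $\int_0^T|\Delta^nF|\,dA_s$, which goes to zero. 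One then takes the two limits \emph{in order}: for fixed $\theta$, let $n\to\infty$ (the last term vanishes in $L^2$ by Vitali, while $\chi^n(\theta)\to\exp(\gamma e^{2\ol\beta K_A}|\xi^0|)$ in every $L^p$, a limit \emph{independent of $\theta$}); then let $\theta\uparrow 1$, which kills the first two terms since their prefactors are $(1-\theta)$ times something now bounded uniformly in $\theta$. This sequential-limit argument sidesteps entirely the bookkeeping you anticipate: no $\theta_n$ is ever chosen, and the ``all orders'' hypothesis is used only to guarantee that $\chi^n(\theta)$, $D^n_T$, $\Upsilon^n(\theta)$ lie in $L^2$ for each fixed $\theta$, uniformly in $n$.
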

\begin{rmk}
Let us briefly indicate how our present stability theorem differs from those in the literature, \cite{Fr09} Theorem 2.1 and \cite{Mo09} Lemma 3.3. The key points are that firstly in our conditions the parameters $\alpha^n$ and $\beta^n$ are allowed to depend on $n$, whereas in \cite{Fr09} and \cite{Mo09} they are assumed independent of $n$. Secondly, we assume a uniform exponential moments condition, as opposed to a uniform boundedness condition in the cited references. 
Finally, in the unbounded setting we require the mode of convergence assumed above, this is in contrast to the bounded setting of \cite{Fr09} Theorem 2.1 where the weaker notion of pointwise convergence is sufficient for a stability result to hold.
\end{rmk}
\begin{proof} Note that Assumption \ref{ass} (i) holds for each $n$ thanks to \eqref{supxin}. Exactly as in the statement of Corollary \ref{Cornormbds} we
deduce that for each $p\geq1$
\begin{align*}
\sup_{\substack{n\geq
0}}\E\Bigg[\!\!\left(\exp\!\left(\sup_{\substack{0\leq t\leq
T}}\big|Y_t^n\big|\right)\!\right)^p+\left(\int_0^T(Z_s^n)^\tr\,d\lo
M\ro_sZ_s^n+d\lo N^n\ro_s\right)^{p/2}\Bigg]<+\infty.
\end{align*}
Hence the sequences in $n$ of random variables
\begin{align*}
\left(\exp\!\left(\sup_{\substack{0\leq t\leq
T}}\big|Y_t^n\big|\right)\!\right)^p\quad\text{ and }\quad\left(\int_0^T(Z_s^n)^\tr\,d\lo
M\ro_sZ_s^n+d\lo N^n\ro_s\right)^{p/2}
\end{align*}
are uniformly integrable for all $p\geq1$. 
By the Vitali convergence theorem, it is thus sufficient to prove that
\begin{equation*}
\sup_{\substack{0\leq t\leq
T}}\big|Y_t^n-Y^0_t\big|+\int_0^T(Z_s^n-Z^0_s)^\tr\,d\lo
M\ro_s(Z_s^n-Z^0_s)+d\lo
N^n-N^0\ro_s\longrightarrow 0
\end{equation*}
in probability when $n$ tends to infinity.\\

We split the proof into four steps. The first two steps construct one-sided estimates for the difference of $Y^n$ and $Y^0$ proceeding very similarly to the proof of the comparison result. In the third step we combine the aforementioned estimates to show that $Y^n-Y^0$ converges to zero uniformly on $[0,T]$ in probability, i.e. in \emph{ucp}. Finally, we use this result to show the required convergence of the martingale parts.\\

\emph{Step 1.} First fix $\theta\in(0,1)$ and $n\geq1$ and
proceed in the same way as in the proof of Theorem \ref{comp} by
defining the same objects $U$, $V$, $W$, $\rho$, $R$, $F^\theta$,
$\kappa$, $P$, $G$, and $H$, subject to the following
modification. All the objects $X'\in\{Y',Z',N',F'\}$ with a prime
${}'$ are replaced by the respective object $X^0$ with a
superscript 0. All the objects $X\in\{Y,Z,N,F,\a\}$ without a
prime are replaced by the respective object $X^n$ with a
superscript $n$, e.g. $U:=Y^n-\theta Y^0$. We observe that the above objects depend on $n$ however we omit this dependence for brevity. In addition set
$\Delta^n F(s)=(F^n-F^0)(s,Y^0_s,Z^0_s)$. From \eqref{Fthbdd}
and \eqref{GdA},
\begin{align*}
G&\leq\kappa P
e^{R}\left((1-\theta)\Big(\a^n+2\ol{\beta}|Y^0|\Big)+\theta\,
\Delta^n F\right)\notag\\&\leq\gamma e^{2\ol{\beta}K_A} P
\left(\a^n+2\ol{\beta}|Y^0|+\frac{|\Delta^n
F|}{1-\theta}\right)=PJ^n+\gamma e^{2\ol{\beta}K_A} P
\frac{|\Delta^n F|}{1-\theta},
\end{align*}
where, consistent with our modification, \[ J^n:=\gamma
e^{2\ol{\beta}K_A} \left(\a^n+2\ol{\beta}|Y^0|\right)\geq 0.\]
Considering
\[D^n_t:=\exp\left(\int_0^tJ^n_s\,dA_s\right)\quad\text{ and
}\quad\ti{P}^n_t:=D^n_tP_t\] and applying partial integration
yields
\begin{multline*}
d\ti{P}^n_t+\gamma e^{2\ol{\beta}K_A} \ti{P}^n_t \frac{|\Delta^n
F|}{1-\theta}\,dA_t =D^n_t\,dP_t+P_t\,dD^n_t+\gamma
e^{2\ol{\beta}K_A} \ti{P}^n_t \frac{|\Delta^n
F|}{1-\theta}\,dA_t
\\
=D^n_t\!\left[\!\left(P_tJ^n_t+\gamma
e^{2\ol{\beta}K_A} P_t \frac{|\Delta^n
F|}{1-\theta}-G_t\right)dA_t-dH_t+\kappa
P_te^{R_t}\!\left(V_t^{\tr}\,dM_t+dW_t\right)\right].
\end{multline*}
We conclude that the finite variation parts in the last
expression are nonnegative. We now use the stopping time argument
from the proof of Theorem \ref{comp} to derive
\begin{equation}
P_t\leq D^n_tP_t\leq\E\!\left[D^n_TP_T+ \frac{\gamma
e^{2\ol{\beta}K_A}}{1-\theta}\int_t^T D_s^nP_s |\Delta^n F(s)|\,
dA_s\Bigg|\,\mc{F}_t\right].\label{PtDtDT1}
\end{equation}
From the boundedness of $\rho$ and the definition
$P=\exp\!\left(\kappa e^RU\right)$ we derive, for $s\in[0,T]$,
\begin{gather*}
P_s\leq \sup_{\substack{0\leq t\leq T}}\left[\exp\!\left( \frac{\gamma e^{2\ol{\beta}K_A}}{1-\theta}\Big(|Y_t^0|+|Y_t^n|\Big)\!\right)\right]=:\Upsilon^n(\theta)\quad\text{and}\\
P_T\leq\exp\!\left(\frac{\gamma
e^{2\ol{\beta}K_A}}{1-\theta}\big|\xi^n-\theta
\xi^0\big|\!\right)\leq\exp\!\left(\frac{\gamma
e^{2\ol{\beta}K_A}}{1-\theta}\Big(\big|\xi^n-\theta
\xi^0\big|\vee\big|\xi^0-\theta
\xi^n\big|\Big)\!\right)=:\chi^n(\theta).
\end{gather*}
Using the boundedness of $\rho$, the inequalities $\log(x)\leq x$,
\eqref{PtDtDT1} and $1\leq D_s^n\leq D_T^n$ we then find
\begin{align}
Y_t^n-Y^0_t\notag&\leq(1-\theta)|Y^0_t|+ Y_t^n-\theta
Y_t^0=(1-\theta)|Y_t^0|+U_t\notag\\&=(1-\theta)|Y_t^0|+\frac{1-\theta}{\gamma}\,\exp\!\left(-\ol{\beta}K_A-R_t\right)\log(P_t)\notag\\
&\leq(1-\theta)|Y_t^0|+\frac{1-\theta}{\gamma}\,\E\!\left[D^n_T\chi^n(\theta)+
\frac{\gamma
e^{2\ol{\beta}K_A}}{1-\theta}\,D_T^n\Upsilon^n(\theta)\int_t^T
|\Delta^n F(s)|\, dA_s\Bigg|\,\mc{F}_t\right]\!. \label{StabYbound1}
\end{align}

\emph{Step 2.} With regards to the converse inequality we proceed
as in the proof of Theorem \ref{comp} again. We define the same
objects $U$, $V$, $W$, $R$, $F^\theta$, $\kappa$, $P$, $G$, and
$H$ but now subject to the following modification. All the objects
$X'\in\{Y',Z',N',F'\}$ with a prime ${}'$ are replaced by the
respective object $X^n$ with a superscript $n\geq1$. All the objects
$X\in\{Y,Z,N,F,\a\}$ without a prime are replaced by the
respective object $X^0$ with a superscript 0, e.g. $U:=Y^0-\theta
Y^n$. Moreover, we define $\rho$ differently, namely,
\[\rho_s:=\frac{F^n(s,Y^0_s,Z_s^n)-F^n(s,Y_s^n,Z_s^n)}{Y^0_s-Y_s^n}\,\I{\{|Y^0_s-Y^n_s|>0 \}}.\]
This ensures that $|\rho|\leq \ol{\beta}$ still holds and
\[ \theta F^n(s,Y^0_s,Z_s^n)-\theta F^n(s,Y_s^n,Z_s^n)=\rho(\theta Y^0-\theta Y^n)\leq \ol{\beta}(1-\theta)|Y^0_s|+\rho_s U_s.\]
Using the convexity of $F^n$, the estimate \eqref{Ftyzbetas} and
the \emph{same} definition of $\Delta^n F$ as in Step 1 we derive
\[F^\theta\leq |\Delta^n F|+(1-\theta)(\a^n+2\ol{\beta}|Y^0|)+\frac{\gamma}{2(1-\theta)}\,\|BV\|^2,\]
so that the following inequality holds
\[G\leq\gamma e^{2\ol{\beta}K_A}P \left(\a^n+2\ol{\beta}|Y^0|+\frac{|\Delta^n F|}{1-\theta}\right).\]
Observe that this is the same estimate on $G$ as that obtained in
Step 1. Thus we may rewrite \eqref{StabYbound1} as
\begin{align}
Y^0_t-Y^n_t\leq(1-\theta)|Y_t^n|+\frac{1-\theta}{\gamma}\,\E\!\left[D^n_T\chi^n(\theta)+
\frac{\gamma
e^{2\ol{\beta}K_A}}{1-\theta}\,D_T^n\Upsilon^n(\theta)\int_t^T
|\Delta^n F(s)|\, dA_s\Bigg|\,\mc{F}_t\right]\!,
\label{StabYbound2}
\end{align}
where $J^n$ and thus $D^n$, $\Upsilon^n$ and $\chi^n(\theta)$ are as in Step 1, as well.

\emph{Step 3.} Let us now prove that $\left(\sup_{\substack{0\leq
t\leq T}}\big |Y_t^n-Y_t^0\big|\right)_{n\geq1}$ converges to zero
in probability. Summing up \eqref{StabYbound1} and
\eqref{StabYbound2} we deduce
\begin{multline*}
\big|Y_t^n-Y^0_t\big|\leq(1-\theta)\left(|Y^0_t|+|Y_t^n|\right)+\frac{1-\theta}{\gamma}\,\E\!\left[D^n_T\chi^n(\theta)\bigg|\,\mc{F}_t\right]\notag\\+e^{2\ol{\beta}K_A}\,\E\!\left[
D_T^n\Upsilon^n(\theta)\int_t^T |\Delta^n F(s)|\,
dA_s\Bigg|\,\mc{F}_t\right].
\end{multline*}
Applying the Doob, Markov and H{\"o}lder inequalities, we deduce
the existence of some nonnegative constants $c_1$, $c_2$, independent of $\theta$, as well as
$c_3(\theta)$ such that, for $\eps>0$,
\begin{align}
\mb{P}\Bigg(&\sup_{\substack{0\leq t\leq T}}\big
|Y_t^n-Y_t^0\big|\geq \eps\Bigg)\notag\\&
\leq\mb{P}\!\left((1-\theta)\sup_{\substack{0\leq t\leq T}}
\left(|Y_t^0|+|Y_t^n|\right)\geq
\frac{\eps}{3}\right)+\mb{P}\!\left(\frac{1-\theta}{\gamma}\sup_{\substack{0\leq t\leq T}}\E\!\left[D^n_T\chi^n(\theta)\bigg|\,\mc{F}_t\right]\geq\frac{\eps}{3}\right)\notag\\&\hspace{4cm}+\mb{P}\!\left(e^{2\ol{\beta}K_A}\sup_{\substack{0\leq t\leq T}}\E\!\left[
D_T^n\Upsilon^n(\theta)\int_t^T |\Delta^n F(s)|\,
dA_s\Bigg|\,\mc{F}_t\right]\geq \frac{\eps}{3}\right)\notag\\&
\leq\frac{3(1-\theta)}{\eps}\,\E\!\left[\sup_{\substack{0\leq
t\leq T}}
\left(|Y_t^0|+|Y_t^n|\right)\right]+\frac{3(1-\theta)}{\eps\gamma}\,\E\!\left[D^n_T\chi^n(\theta)\right]\notag\\&\hspace{4cm}+\frac{3e^{2\ol{\beta}K_A}}{\eps}\,\E\!\left[
D_T^n\Upsilon^n(\theta)\int_0^T |\Delta^n F(s)|\,
dA_s\right]\notag\\&\leq
\frac{c_1(1-\theta)}{\eps}+\frac{c_2(1-\theta)}{\eps}\,\E\!\left[\chi^n(\theta)^2\right]^{1/2}+\frac{c_3(\theta)}{\eps}\,\E\!\left[\!\left(\int_0^T
|\Delta^n F(s)|\,dA_s\right)^2\right]^{1/2}\!\!,\label{C1C2C3}
\end{align}
where the latter inequality is due to the fact that, by our
assumptions, the sequences $\left(\sup_{\substack{0\leq t\leq T}}
\left(|Y_t^0|+|Y_t^n|\right)\right)_{n\geq 1}$, $(D_T^n)_{n\geq
1}$ and $(\Upsilon^n(\theta))_{n\geq1}$ are bounded in all
$L^p(\mb{P})$ spaces, $p\geq1$.
In addition, we used that
\begin{align}
|\Delta^n
F(s)|&\leq|F^n(s,Y^0_s,Z^0_s)-F^n(s,0,Z^0_s)|+|F^n(s,0,Z^0_s)|+|F^0(s,0,Z^0_s)|\notag\\&+|F^0(s,Y^0_s,Z^0_s)-F^0(s,0,Z^0_s)|\leq2\ol{\beta}|Y^0_s|+\a^n_s+\a_s^0+\gamma\|B_sZ_s^0\|^2,\label{DeltanFs}
\end{align}
which implies that for all $p\geq1$ $\left(\int_0^T|\Delta^n F(s)|\,dA_s\right)^p$ is uniformly integrable due to \eqref{normbds2} and \eqref{supxin}. Hence, the Vitali convergence theorem and \eqref{xiFninprob} imply that $\int_0^T|\Delta^n F(s)|\,dA_s\to0$ in all $L^p(\mb{P})$ spaces. Observe that $\chi^n(\theta)$ converges in
probability to $\exp\!\left(\gamma
e^{2\ol{\beta}K_A}|\xi^0|\right)$ as $n$ goes to infinity. This
convergence is also in all $L^p(\mb{P})$ spaces because of the
Vitali convergence theorem and our uniform integrability
assumption on $(\xi^n)_{n\geq1}$. More precisely, for all $p\geq1$, we have
\begin{align*}
\sup_{\substack{n\geq1}}\E\!\left[\chi^n(\theta)^p\right]&\leq
\sup_{\substack{n\geq1}}\E\!\left[\exp\!\left(\frac{p\gamma
e^{2\ol{\beta}K_A}}{1-\theta}\Big(\big|\xi^n\big|+|\xi^0|\Big)\!\right)\right]\\&\leq
\sup_{\substack{n\geq1}}\E\!\left[\exp\!\left(\frac{2p\gamma
e^{2\ol{\beta}K_A}}{1-\theta}\,\big|\xi^n\big|\!\right)\right]^{1/2}\E\!\left[\exp\!\left(\frac{2p\gamma
e^{2\ol{\beta}K_A}}{1-\theta}\,\big|\xi^0|\!\right)\right]^{1/2}<+\infty.
\end{align*}
From \eqref{C1C2C3} we then deduce that for all $\theta\in(0,1)$
\begin{equation*}
\limsup_{\substack{n\to+\infty}}\mb{P}\!\left(\sup_{\substack{0\leq
t\leq T}}\big |Y_t^n-Y^0_t\big|\geq
\eps\right)\leq\frac{c_1(1-\theta)}{\eps}+\frac{c_2(1-\theta)}{\eps}\,\E\!\left[\exp\!\left(2\gamma
e^{2\ol{\beta}K_A}|\xi^0|\right)\right]^{1/2}\!.
\end{equation*}
We then send $\theta$ to 1 to conclude that
\begin{equation*}
\lim_{\substack{n\to+\infty}}\mb{P}\!\left(\sup_{\substack{0\leq
t\leq T}}\big |Y_t^n-Y^0_t\big|\geq \eps\right)= 0.
\end{equation*}

\emph{Step 4.} Let us now turn to the last assertion of the
theorem. We derive from It{\^o}'s formula that
\begin{multline*}
\E\Bigg[\!\int_0^T(Z_s^n-Z^0_s)^{\tr}\,d\lo
M\ro_s(Z_s^n-Z^0_s)+d\lo
N^n-N^0\ro_s\Bigg]\\\leq\E\!\left[(\xi^n-\xi^0)^2+2\sup_{\substack{0\leq
t\leq T}}\big
|Y_t^n-Y_t^0\big|\cdot\int_0^T\big|F^n(s,Y_s^n,Z_s^n)-F^0(s,Y^0_s,Z^0_s)\big|\,
dA_s\right]\\+\E\!\left[\sup_{\substack{0\leq t\leq T}}\big
|Y_t^n-Y^0_t\big|\cdot\Bigg|\int_0^Td\lo N^n\ro_s-d\lo
N^0\ro_s\Bigg|\right]
\end{multline*}
after observing that the local martingale arising therein is in fact a true martingale thanks to the present integrability assumptions, cf. Corollary \ref{Cornormbds}. By \eqref{Ftyzbetas},
\begin{multline*}
\big|F^n(s,Y_s^n,Z_s^n)-F^0(s,Y^0_s,Z^0_s)\big|\leq
\a_s^n+\a^0_s+\ol{\beta}|Y_s^n|+\ol{\beta}|Y^0_s|+\frac{\gamma}{2}\,\|B_sZ_s^n\|^2+\frac{\gamma}{2}\,\|B_sZ^0_s\|^2.
\end{multline*}
Applying H{\"o}lder's inequality, the formula
\eqref{normbds2} and the condition \eqref{supxin} we recognize
(the expectation of the squares of) the integrals from the right
hand side above as uniformly bounded (in $n$). The result now
follows from the fact that $\xi^n\to\xi^0$ and that by
Steps 1-3 $\left(\sup_{\substack{0\leq t\leq T}}
\big|Y^n_t-Y^0_t\big|\right)\to0$ in $L^2(\mb{P})$. To sum up, we conclude in particular that
\begin{equation*}\int_0^T(Z_s^n-Z^0_s)^{\tr}\,d\lo
M\ro_s(Z_s^n-Z^0_s)+d\lo N^n-N^0\ro_s
\stackrel{\mb{P}}{\longrightarrow} 0\quad\text{as}\quad n\to+\infty.\qedhere
\end{equation*}
\end{proof}
\begin{rmk}
As previously discussed the sense of convergence given here differs from that in \cite{BH08} where the pointwise convergence of the drivers is assumed, namely
\begin{equation}\label{PointConv}
\mu^A\text{-a.e. for all }y\text{ and }z \text{ we have
}\lim_{n\to+\infty}F^n(\cdot,y,z)= F^0(\cdot,y,z).
\end{equation}
We provide an example in the next section showing that this condition is not sufficient in the present setting so that the statement of \cite{BH08} Proposition 7 needs a small modification.
\end{rmk}

\section{Stability Counterexample}\label{SecStabCount}
Suppose our filtration is the augmentation of the filtration generated by a one dimensional Brownian
motion $W$ so that we may set $A_t=t$ and $B\equiv1$. The measure $\mu^A$ now becomes the product of $\mb{P}$ and the Lebesgue measure on $[0,T]$. In this setting BSDEs take the following form
\begin{equation}
dY_t=Z_t\,dW_t-F(t,Y_t,Z_t)\,dt, \quad Y_T=\xi.\label{BSDEW7}
\end{equation}
A solution now consists of a \emph{pair} $(Y,Z)$ such that $Y$ has
continuous paths, $Z$ is a predictable $W$-integrable process with $\int_0^TZ_t^2\,dt<+\infty$ $\mathbb{P}$-a.s., $\int_0^T|F(t,Y_t,Z_t)|\,dt<+\infty$ $\mathbb{P}$-a.s. and such that
\eqref{BSDEW7} holds, $\mb{P}$-a.s.

Suppose our condition
\[\int_0^T|F^n-F^0|\,(s,Y^0_s,Z^0_s)\,ds\stackrel{\mb{P}}{\longrightarrow}
0\] is replaced by \eqref{PointConv}, i.e. $F^n$ converges to
$F^0$ pointwise $(t,\omega)$-almost everywhere on $[0,T]\times
\Omega$, where the $\mu^A$-null set does not depend on $(y,z)$. One may ask whether this is sufficient for Theorem
\ref{thmstab} to hold, in particular if 
\begin{equation}
\sup_{\substack{0\leq t\leq
T}}|Y_t^n-Y^0_t|\stackrel{\mb{P}}{\longrightarrow}0.\label{BHStab}
\end{equation}
We now present an example to show that this is in fact not the case. The example resembles the standard counterexample to the
dominated convergence theorem and shows that such a stability statement (under the present assumptions) already fails to hold in an
essentially deterministic situation.

Consider $T>1$ together with parameters
$F^0\equiv\a^0\equiv\xi^0\equiv0$. Then all the assumptions in
\cite{BH08} and in the present paper are satisfied and the unique solution to the BSDE
\eqref{BSDEW7} with parameters $(F^0,\xi^0)$ is given by
$(Y^0,Z^0)\equiv 0$, up to appropriate null sets.

Furthermore, for integers $n\geq1$, define the terminal values
$\xi^n\equiv0$ and drivers \[F^n\equiv\a^n\equiv
n\cdot\I{\left[0,\frac{1}{n}\right]\times\Omega}\geq0.\] Observe
that $F^n$ does not depend on $y$ or on $z$ and is constant in
$\omega$, hence deterministic. In particular $|\a^n|_1=\int_0^T\a^n_s\,ds=1$, independently of
$\omega$ and $n$, which shows that again all the assumptions in
\cite{BH08} and in the present paper are satisfied by each pair $(F^n,\xi^n)$.

The unique solution to the BSDE \eqref{BSDEW7} with parameters
$(F^n,\xi^n)$ is given $\mb{P}$-a.s. by $Z^n\equiv0$, more
precisely the zero element in $L^2([0,T]\times\Omega)$, and
\[Y_t^n=(1-nt)\cdot\I{\left[0,\frac{1}{n}\right]\times\Omega}(t,\cdot),\]
which follows from
\[dY_t^n=-n\cdot\I{\left[0,\frac{1}{n}\right]\times\Omega}(t,\cdot)\,dt=-F^n(t)\,dt,\quad
Y_T^n=0=\xi^n,\] together with the $\mb{P}$-a.s. continuity of
$Y^n$. We deduce that
$Y^n$ is nonnegative, nonincreasing and that $Y^n_0=1$,
independent of $n$, $\mb{P}$-a.s.

It follows that, $\mb{P}$-a.s. for all $n\geq1$,
\[\sup_{\substack{0\leq t\leq T}}|Y_t^n-Y^0_t|=Y_0^n=1,\] from which
trivially
\begin{equation}
\lim_{n\to+\infty}\left(\sup_{\substack{0\leq t\leq
T}}|Y_t^n-Y^0_t|\right)=1 \quad\mb{P}\text{-}\mathrm{a.s.}\label{Yconvtone}
\end{equation}
However, by construction, $\lim_{n\to+\infty}F^n=0= F$ on $(0,T]\times \Omega$, hence $\mu^A$-a.e. independently of $y$ and $z$, so that \eqref{PointConv} holds. Since \eqref{BHStab} and \eqref{Yconvtone} cannot hold simultaneously, the condition in \eqref{PointConv} is not sufficient for a stability theorem to hold under the present assumptions. We remark that this phenomenon is not dependent on the non differentiability of the paths of the $Y^n$ as one can choose $F^n$ to be arbitrarily smooth in $t$. Indeed, independent of $\omega$, take a smooth nonnegative function on $[0,T]$ that is identically zero on $(\tfrac{1}{n},T]$ and integrates to one over $[0,T]$. The corresponding $Y^n$ in the BSDE solution are smooth in $t$ and lead to the same contradiction.\\

The problem arising in the proof of \cite{BH08} Proposition 7 can be observed from equations \eqref{C1C2C3} and \eqref{DeltanFs}. More specifically, the authors require $L^2(\mb{P})$-convergence of the random variables $\int_0^T |\Delta^n F(s)|\,ds$ however they only dispose of an estimate on the product space $[0,T]\times\Omega$ of the form \[|\Delta^nF|\leq 2\ol{\beta}|Y^0|+\a^n+\a^0+\gamma\|Z^0\|^2,\] together with uniform integrability assumptions that are on the level of $\Omega$, \emph{with the $t$-component integrated away}. There is no guarantee that the pointwise convergence of $|\Delta^n F|$ on the product space $[0,T]\times\Omega$ will transform to pointwise convergence of the integrals $\int_0^T |\Delta^n F(s)|\,ds$ on $\Omega$, which is necessary to utilize the uniform integrability assumptions. This is the insight behind the present example and motivates the modified condition.\\

We now move on to look at whether the martingale part of our BSDE
solution determines a change of measure.


\section{Change of measure}\label{MeasChange}

In this section we show that under the exponential moments
assumption the martingale part of a solution $(Y,Z,N)$ to the BSDE
\eqref{BSDEF-} defines a measure change. In particular, we do not need to show that $Z\cdot M+N$ is BMO, which is a stronger statement that may indeed not hold, see \cite{FMW11} for some examples and  related discussion. Here we do not
require that 
the driver $F$ be convex in $z$. Our proof is based upon Kazamaki \cite{Ka94} Lemma 1.6 and 1.7 which we state here for martingales on compact time intervals.
\begin{lem}[\cite{Ka94} Lemma 1.6 and 1.7]\label{kazamaki}
If $\widetilde{M}$ is a martingale on $[0,T]$ such that
\begin{equation}\label{eqkaz}
\sup_{\substack{ \tau \mathrm{\,stopping \,time}\\\mathrm{valued \,in\,[0,T]}}}\E\!\left[\exp\!\left(\eta\,\ti{M}_\tau+\left(\frac{1}{2}-\eta\right)\!\lo \widetilde{M}\ro_\tau\right)\right]<+\infty,
\end{equation}
for a real number $\eta\neq1$, then $\mc{E}\!\left(\eta\,\ti{M}\right)$ is a true martingale on $[0,T]$. Moreover, if condition \eqref{eqkaz} holds for \emph{some} $\eta^*>1$ then it holds for \emph{all} $\eta\in(1,\eta^*)$.
\end{lem}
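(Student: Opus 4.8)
For the first assertion the plan is to argue that $\mc{E}(\eta\ti{M})$, being a nonnegative local martingale, is automatically a supermartingale with $\mc{E}(\eta\ti{M})_0=1$ (since $\mc F_0$ is trivial we may take $\ti M_0=0$), and that it is a true martingale on $[0,T]$ as soon as the family $\{\mc{E}(\eta\ti{M})_\tau\}$, indexed by stopping times $\tau$ valued in $[0,T]$, is uniformly integrable: passing to the limit in $\E[\mc{E}(\eta\ti M)_{t\wedge\sigma_k}]=1$ along a localising sequence $(\sigma_k)$ then gives $\E[\mc{E}(\eta\ti M)_t]=1$, and a nonnegative supermartingale with constant expectation is a martingale. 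I would exclude the trivial case $\eta=0$ and write $\Phi(\eta)$ for the supremum in \eqref{eqkaz}; the opening remark also gives $\E[\mc{E}(\nu\ti M)_\tau]\leq1$ for every real $\nu$.

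To obtain the uniform integrability I would establish a uniform reverse-Hölder bound, namely the existence of some $r>1$ with $\sup_\tau\E[\mc{E}(\eta\ti{M})_\tau^{\,r}]<+\infty$, whence uniform integrability follows from the de la Vall\'ee--Poussin criterion. To produce the bound I would split the exponent of $\mc{E}(\eta\ti M)_\tau^{\,r}$, for conjugate exponents $p,q\in(1,\infty)$, as
\begin{equation*}
r\eta\,\ti{M}_\tau-\tfrac{r\eta^2}{2}\lo\ti{M}\ro_\tau=\tfrac1p\Big(\eta\,\ti{M}_\tau+(\tfrac12-\eta)\lo\ti{M}\ro_\tau\Big)+\tfrac1q\Big(\nu\,\ti{M}_\tau-\tfrac{\nu^2}{2}\lo\ti{M}\ro_\tau\Big),
\end{equation*}
where the first bracket is exactly the exponent in \eqref{eqkaz} and the second is the exponent of $\mc{E}(\nu\ti M)$ for a parameter $\nu$ to be fixed. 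H\"older's inequality together with $\E[\mc{E}(\nu\ti M)_\tau]\leq1$ then yields $\sup_\tau\E[\mc{E}(\eta\ti M)_\tau^{\,r}]\leq\Phi(\eta)^{1/p}<+\infty$, provided $p,q,\nu,r$ can be chosen so that the displayed identity holds coordinatewise in $\ti M_\tau$ and in $\lo\ti M\ro_\tau$.

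The main obstacle is precisely the solvability of this matching with $r$ strictly larger than one. Writing $s:=1/p$ and eliminating $\nu$ via the coefficient of $\ti M_\tau$, the two matching relations collapse to a single polynomial relation $G(s,r)=0$ for which $G(0,1)=0$, $\partial_r G(0,1)=-\eta^2$ and $\partial_s G(0,1)=(\eta-1)^2$; by the implicit function theorem this produces a branch $r=r(s)$ with $r(0)=1$ and $r'(0)=(\eta-1)^2/\eta^2>0$. It is here that the hypothesis $\eta\neq1$ is decisive: the strict positivity of $(\eta-1)^2$ allows one to pick $s\in(0,1)$ small with $r(s)>1$ (and the attendant $\nu$ real, $p,q\in(1,\infty)$), trading a share of the quadratic-variation control in \eqref{eqkaz} for genuine $L^{r}$-integrability. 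The underlying mechanism is visible already in the factorisation $\exp\big(\eta\ti{M}_\tau+(\tfrac12-\eta)\lo\ti{M}\ro_\tau\big)=\mc{E}(\eta\ti{M})_\tau\,\exp\big(\tfrac{(\eta-1)^2}{2}\lo\ti{M}\ro_\tau\big)$, which exhibits \eqref{eqkaz} as an $L^{1}$-bound on $\mc{E}(\eta\ti{M})_\tau$ reinforced by a strictly positive exponential weight in $\lo\ti{M}\ro_\tau$; this surplus is what powers the argument.

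For the second assertion I would exploit that the exponent in \eqref{eqkaz}, namely $E(\eta):=\tfrac12\lo\ti{M}\ro_\tau+\eta(\ti{M}_\tau-\lo\ti{M}\ro_\tau)$, is affine in $\eta$. For $\eta\in(1,\eta^*)$ I write $\eta=(1-t)\cdot1+t\,\eta^*$ with $t:=(\eta-1)/(\eta^*-1)\in(0,1)$, so that $\exp(E(\eta))=\exp(E(1))^{1-t}\exp(E(\eta^*))^{t}$, and apply H\"older's inequality with exponents $1/(1-t)$ and $1/t$ to get
\begin{equation*}
\E\big[\exp(E(\eta))\big]\leq \E\big[\exp(E(1))\big]^{1-t}\,\E\big[\exp(E(\eta^*))\big]^{t}.
\end{equation*}
Since $E(1)=\ti{M}_\tau-\tfrac12\lo\ti{M}\ro_\tau$ is the exponent of the supermartingale $\mc{E}(\ti{M})$, the first factor is at most one, while the second is bounded by $\Phi(\eta^*)$ by assumption; taking the supremum over $\tau$ gives $\Phi(\eta)\leq\Phi(\eta^*)^{t}<+\infty$. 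This interpolation is routine once the affine structure of $E(\cdot)$ is noticed, so I expect no further difficulty in this part.
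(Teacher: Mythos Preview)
The paper does not supply its own proof of this lemma; it is quoted verbatim from Kazamaki's monograph \cite{Ka94} (Lemmas~1.6 and~1.7) and used as a black box in the proof of Theorem~\ref{TrueMart2}. There is therefore no in-paper argument to compare against.

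That said, your proposal is correct and is essentially Kazamaki's own argument. For the first assertion, the H\"older splitting of $\mc{E}(\eta\ti M)_\tau^{\,r}$ into a factor controlled by \eqref{eqkaz} and a stochastic-exponential factor with supermartingale expectation~$\leq1$, together with the implicit-function step producing $r>1$, is precisely the mechanism in \cite{Ka94} Lemma~1.7. Your identification $r'(0)=(\eta-1)^2/\eta^2$ is right; note however that the individual values you quote for $\partial_rG(0,1)$ and $\partial_sG(0,1)$ depend on the normalisation of $G$, and with the most natural choice they come out as $\eta^2/2$ and $-(\eta-1)^2/2$ rather than $-\eta^2$ and $(\eta-1)^2$ --- harmless, since only their ratio matters. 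For the second assertion, the affine interpolation $E(\eta)=(1-t)E(1)+t\,E(\eta^*)$ combined with H\"older and the supermartingale bound $\E[\mc{E}(\ti M)_\tau]\leq1$ is exactly \cite{Ka94} Lemma~1.6. One tacit point worth making explicit is that $\ti M$ is continuous here (as it is in the paper's application, where $\ti M=\tilde q(Z\cdot M+N)$), so that $\mc{E}(\nu\ti M)$ is indeed a nonnegative continuous local martingale and the optional-sampling bound $\E[\mc{E}(\nu\ti M)_\tau]\leq1$ is immediate.
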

We deduce the following result.
\begin{thm}\label{TrueMart2}
Let Assumption \ref{ass} (iii) hold, $q$ be a real number with
$|q|>\gamma/2$ and $(Y,Z,N)$ a solution to the BSDE \eqref{BSDEF-}
such that, $g$ is bounded by $\gamma/2$, $Y\in\mathfrak{E}$ and
$Z\cdot M+N$ is a martingale. If $\beta>0$ we also require that
$Y^*|\a|_1$ has exponential moments of all orders or that
\eqref{AssLipF} holds with fixed $y_2=0$. Then
$\mc{E}\Big(q\,(Z\cdot M+N)\Big)$ is a true martingale on
$[0,T]$. In particular, when $\gamma<2$,
$\mc{E}(Z\cdot M+N)$ is a true martingale.
\end{thm}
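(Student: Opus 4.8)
The plan is to verify the hypothesis of Kazamaki's criterion, Lemma~\ref{kazamaki}, for a suitable rescaling of the martingale $Z\cdot M+N$ and then to invoke the second (``downward extension'') assertion of that lemma; this is what makes the whole half-line $|q|>\gamma/2$ accessible, rather than only $|q|>\gamma$, which is all a direct application would yield.

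First I would record the identity obtained by integrating the BSDE \eqref{BSDEF-} from $0$ to an arbitrary stopping time $\tau$, using that $Z\cdot M+N$ vanishes at time $0$:
\[
(Z\cdot M+N)_\tau=Y_\tau-Y_0+\int_0^\tau F(s,Y_s,Z_s)\,dA_s+\int_0^\tau g_s\,d\lo N\ro_s .
\]
Since $|q|>\gamma/2$ one may fix $\eta_0\in\bigl(1,\,2|q|/\gamma\bigr)$ and set $\ti M:=(q/\eta_0)\,(Z\cdot M+N)$; this is a true martingale by assumption, and $\lo\ti M\ro_t=(q/\eta_0)^2\lo Z\cdot M+N\ro_t$ with $\lo Z\cdot M+N\ro_t=\int_0^tZ_s^\tr\,d\lo M\ro_sZ_s+\lo N\ro_t$ by orthogonality of $N$ and $M$. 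Let $\eta^*>\eta_0$ be a parameter to be chosen and write $\kappa:=|q|\eta^*/\eta_0>0$. Combining the identity with the quadratic growth bound \eqref{AssGrowF}, the bound $|g|\le\gamma/2$ and $A_T\le K_A$, one obtains for every stopping time $\tau\in[0,T]$
\[
\eta^*\ti M_\tau+\bigl(\tfrac12-\eta^*\bigr)\lo\ti M\ro_\tau\ \le\ 2\kappa Y^*+\kappa|\a|_1+\kappa\beta\,Y^*|\a|_1+\Bigl(\tfrac{\kappa\gamma}{2}-\bigl(\eta^*-\tfrac12\bigr)\tfrac{q^2}{\eta_0^2}\Bigr)\lo Z\cdot M+N\ro_\tau ,
\]
where the term $\kappa\beta Y^*|\a|_1$ comes from $\int_0^\tau\a_s\beta|Y_s|\,dA_s\le\beta Y^*|\a|_1$ in the case $\beta>0$; if instead \eqref{AssLipF} holds with $y_2=0$ one uses $|F(t,y,z)|\le|F(t,0,z)|+\ol\beta|y|$ together with \eqref{AssGrowF} at $y=0$, and that term is replaced by $\kappa\ol\beta K_A Y^*$. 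When $\beta=0$ no such term is present.

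The point is now that the coefficient of $\lo Z\cdot M+N\ro_\tau$ above equals $\frac{|q|}{\eta_0^2}\bigl(\eta^*(\tfrac{\gamma\eta_0}{2}-|q|)+\tfrac{|q|}{2}\bigr)$, and since $\eta_0<2|q|/\gamma$ the factor $\tfrac{\gamma\eta_0}{2}-|q|$ is strictly negative; so choosing $\eta^*$ larger than $\max\bigl(\eta_0,\,|q|/(2|q|-\gamma\eta_0)\bigr)$ makes this coefficient nonpositive. Dropping the — now favourably signed — $\lo Z\cdot M+N\ro_\tau$-term, exponentiating and taking expectations gives
\[
\sup_{\tau}\E\Bigl[\exp\bigl(\eta^*\ti M_\tau+(\tfrac12-\eta^*)\lo\ti M\ro_\tau\bigr)\Bigr]\le\E\bigl[e^{\,2\kappa Y^*+\kappa|\a|_1+\kappa\beta Y^*|\a|_1}\bigr]<+\infty ,
\]
by the exponential integrability of $Y^*$, of $|\a|_1$, and, for $\beta>0$, of $Y^*|\a|_1$ (or of $Y^*$ and $|\a|_1$ only in the alternative). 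Thus condition \eqref{eqkaz} holds for $\ti M$ at the level $\eta^*>1$; the last sentence of Lemma~\ref{kazamaki} then gives \eqref{eqkaz} for every $\eta\in(1,\eta^*)$, and its first part yields that $\mc E(\eta\ti M)$ is a true martingale for every such $\eta$. Choosing $\eta=\eta_0$ and recalling $\eta_0\ti M=q\,(Z\cdot M+N)$ proves the theorem, the final assertion being the case $q=1$, which is admissible precisely when $\gamma<2$.

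The step I expect to be the main obstacle is exactly this passage through an auxiliary exponent $\eta^*$: a direct application of Lemma~\ref{kazamaki} to $\ti M$ at the level $\eta_0$ would force $|q|\ge\gamma\eta_0^2/(2\eta_0-1)>\gamma$, which is too restrictive. What rescues the argument is that $\lo\ti M\ro$ does not depend on the exponent while the correction $(\tfrac12-\eta^*)\lo\ti M\ro_\tau$ becomes arbitrarily large in the favourable direction as $\eta^*\to\infty$; verifying \eqref{eqkaz} at a large $\eta^*$ and then descending to $\eta_0$ via the refinement part of the lemma lets the quadratic-variation term absorb the $\gamma/2$ produced by \eqref{AssGrowF} and by $g$, and this is what covers the interval $\gamma/2<|q|\le\gamma$. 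A secondary point requiring care is the $\a\beta|y|$-term in \eqref{AssGrowF} when $\beta>0$, which only produces a $\tau$-independent exponential bound under the extra integrability hypothesis on $Y^*|\a|_1$ (or under \eqref{AssLipF} with $y_2=0$); for $\beta=0$ this difficulty disappears.
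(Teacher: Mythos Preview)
Your proof is correct and follows essentially the same route as the paper. The paper sets $\ti M:=\tilde q\,(Z\cdot M+N)$ with $|\tilde q|\in(\gamma/2,|q|)$ and verifies \eqref{eqkaz} for all $\eta\ge q_0:=|\tilde q|/(2|\tilde q|-\gamma)$, then descends via the refinement part of Lemma~\ref{kazamaki} and applies the result at $\eta=q/\tilde q$; your parametrization via $\eta_0$ and $\eta^*$ is the same argument with $\tilde q=q/\eta_0$ (so that your threshold $|q|/(2|q|-\gamma\eta_0)$ coincides with the paper's $q_0$), and your choice of a single large $\eta^*$ in place of the full half-line $\eta\ge q_0$ is an immaterial simplification.
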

\begin{rmk}
In \cite{MS05} Proposition 7 the authors show that the martingale
part of solutions to the BSDE \eqref{BSDEf} with bounded first
component belongs to the class of BMO martingales so that it yields
a measure change. Our theorem may thus be
seen as a generalization of this result to the case in which $Y$
is not necessarily bounded.
\end{rmk}
\begin{proof}
We apply Lemma \ref{kazamaki} with $\ti{M}:=\tilde{q}(Z\cdot M+N)$ for some fixed $|\tilde{q}|>\gamma/2$. Firstly,
we assume that $\beta>0$ and that $Y^*\an$ has
exponential moments of all orders. Considering \[\log G_\eta(t):=\tilde{q}\eta\,\big[(Z\cdot
M)_t+N_t\big]+\tilde{q}^2\left(\frac12-\eta\right)\lo Z\cdot
M+N\ro_t\] for $\eta>0$
we get from the BSDE \eqref{BSDEF-} and the growth condition in
\eqref{AssGrowF},
\begin{align*}
\log G_\eta(t)&=\tilde{q}\eta\!\left(Y_t-Y_0+ \int_0^t\!F(s,Y_s,Z_s)\,dA_s+\int_0^t\!g_s\,d\lo N\ro_s\right)+\tilde{q}^2\!\left(\frac12-\eta\right)\!\lo Z\cdot
M+N\ro_t \\&\leq2|\tilde{q}|\eta Y^*+|\tilde{q}|\eta|\a|_1+|\tilde{q}|\eta\beta\, Y^*|\a|_1+|\tilde{q}|\eta\left(\frac{\gamma}{2}+\frac{|\tilde{q}|}{\eta}\left(\frac12-\eta\right)\!\right)\lo Z\cdot M+N\ro_t.
\end{align*}
Noting that
\begin{equation*}
\frac{\gamma}{2}+\frac{|\tilde{q}|}{\eta} \left(\frac12-\eta\right)\leq
0\, \Longleftrightarrow\, \eta\geq\frac{|\tilde{q}|}{2|\tilde{q}|-\gamma}=:q_0,
\end{equation*}
we have that $\mb{P}$-a.s. for all $t\in[0,T]$,
\begin{equation*}
G_\eta(t)
\leq\exp\!\left(2|\tilde{q}|\eta Y^*\right)\exp\!\bigg(|\tilde{q}|\eta\an+|\tilde{q}|\eta\beta\,
Y^*\an\bigg),
\end{equation*}
for all $\eta\geq q_0$. By the exponential moments assumption on $Y^*$, $\an$ and $Y^*\an$, we conclude from H\"{o}lder's inequality that
\begin{equation}\label{supG}
\sup_{\substack{\tau \mathrm{\,stopping \,time}\\\mathrm{valued \,in\,[0,T]}}}\E\big[G_\eta(\tau)\big]<+\infty
\end{equation}
for all $\eta\geq q_0>1/2$. It now follows from Lemma \ref{kazamaki} that $\mc{E}(\tilde{q}\eta(Z\cdot M+N))$ is a true martingale for all $\eta \in [q_0,\infty)\backslash\{1\}$. The second part of this lemma ensures that in fact $\mc{E}(\tilde{q}\eta(Z\cdot M+N))$ is a true martingale for all $\eta>1$. Thus, if $|q|>\gamma/2$ we apply this result for some fixed $|\tilde{q}|\in(\gamma/2,|q|)$ and $\eta:=q/\tilde{q}=|q/\tilde{q}|>1$ to conclude that indeed $\mc{E}(q(Z\cdot M+N))$ is a true martingale.\\
Now if $\beta>0$ and \eqref{AssLipF} holds with fixed $y_2=0$, we use \eqref{Ftyzbetas} to derive, similarly to the above,
\begin{equation*}
\log G_\eta(t)\leq2|\tilde{q}|\eta Y^*+|\tilde{q}|\eta|\a|_1+|\tilde{q}|\eta\ol{\beta}\, Y^*A_T+|\tilde{q}|\eta\left(\frac{\gamma}{2}+\frac{|\tilde{q}|}{\eta}\left(\frac12-\eta\right)\!\right)\lo Z\cdot M+N\ro_t
\end{equation*}
so that the claim follows from the boundedness of $A_T$ using exactly the same arguments. The reasoning from above also applies when Assumption \ref{ass} (iii) holds with $\beta=0$, without any further conditions.
\end{proof}


\section{Applications}
In the final section we explore two applications of our results, specifically focusing on utility maximization and partial equilibrium. 
Our contribution is to show that the standard results continue to hold when the usual boundedness assumptions are replaced by appropriate exponential moments conditions, allowing for more generality.

\subsection{Constrained Utility Maximization under Exponential Moments}
In the context of the constrained utility maximization problem with power utility the following BSDE appears, cf. \cite{Mo09} Section 4.2.1,
\begin{gather*}
dY_t=Z_t^{\tr}\,dM_t+dN_t-F(t,Z_t)\,dA_t-\frac{1}{2}\,d\lo N\ro_t,\quad Y_T=0,
\end{gather*}
where the generator is given by
\[F(t,z)=-\frac{p(1-p)}{2}\,\inf_{\substack{\nu\in\mc{C}}}\left\|B_t\!\left(\nu-\frac{z-\lambda_t}{1-p}\right)\right\|^2+\frac{p(1-p)}{2}\left\|B_t\!\left(\frac{z-\lambda_t}{1-p}\right)\right\|^2+\frac12\|B_tz\|^2.\]
In the above $1-p\in(0,+\infty)$ is the investor's relative risk aversion and $\nu$ refers to investment strategies in a stock whose returns are driven by the continuous local martingale $M$ under the market price of risk process $\lambda$ and must be valued in the closed constraint set $\mc{C}$. Writing the infimum in terms of the distance function, which is Lipschitz continuous, one can show that the driver $F$ satisfies Assumption \ref{ass} (ii)-(v), so that there exist constants $c_\lambda$ and $c_z$ such that \[|F(t,z)|\leq c_\lambda\|B_t\lambda_t\|^2+c_z\|B_tz\|^2.\] When we enforce that the \emph{mean-variance trade off} $\int_0^T\lambda_t^\tr\lo M\ro_t\lambda_t$ has all exponential moments, an assumption weaker than that of boundedness given in the cited literature, we are in the current framework and see that the BSDE admits a unique solution in $\mathfrak{E}\times\mathfrak{M}^2\times\mc{M}^2$. The crucial step in \cite{HIM05} and \cite{Mo09} is, given a solution triple $(Y,Z,N)$, to construct the relevant optimizers; this is the process of verification. Building on Theorem \ref{TrueMart2} and not relying on BMO arguments such a verification is performed in \cite{He10}, where we refer the reader for further details and where additional illustration is given via a class of stochastic volatility models. Hence, using the theorems of the present paper, it is possible to show that one can repeat the reasoning of \cite{HIM05} and \cite{Mo09} and that similar results continue to hold for more general classes of market price of risk processes under appropriate trading constraints such as bounded short-selling and borrowing.

A biproduct of the analysis described here is a direct link between solutions to the utility maximization problem and solutions to the BSDE in an exponential moments setting, building on \cite{Nu209}. This allows for a detailed study of the stability of the utility maximization problem, undertaken in \cite{MW10a}, which is important in many applications.
\subsection{Partial Equilibrium and Market Completion under Exponential Moments}
We now briefly describe the partial equilibrium framework of \cite{HPdR10} in which structured securities that are written on nontradable assets are priced via a market clearing condition.

The agents in this economy have preferences which are given by dynamic convex risk measures. The risk they are exposed to is given by two sources. The first is encoded in a financial market in which frictionless trading in a stock $S$ is possible. The second is a non-financial risk factor $R$ that can only be dealt with via a derivative written on this external factor. It is assumed that this derivative completes the market, in fact it is shown that in equilibrium the market is complete.

More specifically, while the market price $\lambda^S$ of financial risk is given exogenously the market price $\lambda^R$ of external risk is determined via an equilibrium condition. This states that when the derivative is priced according to the pricing rule arising from $(\lambda^S,\lambda^R)$ the agents' aggregated demand matches the fixed supply. The demand is in this setting given by the solutions to the agents' individual risk minimization problems and is a function of $\lambda^R$.

To ease the exposition we put ourselves in a representative agent setting where the agent's preferences are of entropic type, i.e. their utility function is exponential. Then the following BSDE for the agent's dynamic risk $Y$ appears
\begin{gather*}
dY_t=Z_t^{\tr}\,dW_t-\frac12\left((\lambda_t^S)^2-2\lambda_t^SZ_t^1-(Z_t^2)^2\right)dt,\quad Y_T=H,
\end{gather*}
where $W$ is a two dimensional Brownian motion representing the two sources of risk, $Z=(Z^1,Z^2)$ is the corresponding control process to $Y$ and $H$ is the agent's endowment. Under suitable exponential moments assumptions the present article provides the existence of a unique solution  $(\hat{Y},\hat{Z})$ to the above BSDE. Once we check that $\hat{Z}^2$ defines a valid pricing rule, i.e. that $\mc{E}\big((\lambda^S,\hat{Z}^2)\cdot W\big)$ is a true martingale, we know that the equilibrium market price $\lambda^R$ of external risk is given by $\lambda^R:=\hat{Z}^2$. To conclude we can generalize \cite{HPdR10}, full details will appear elsewhere.

\bibliography{Exp_Mom_SemiMart_BSDE_A_final}

\begin{thebibliography}{10}

\bibitem{Be06}
D.~Becherer.
\newblock Bounded solutions to backward {SDE}'s with jumps for utility
  optimization and indifference hedging.
\newblock {\em Ann. Appl. Probab.}, 16(4):2027--2054, 2006.

\bibitem{Bi73}
J.-M. Bismut.
\newblock Conjugate convex functions in optimal stochastic control.
\newblock {\em J. Math. Anal. Appl.}, 44:384--404, 1973.

\bibitem{BMS07}
G.~Bordigoni, A.~Matoussi, and M.~Schweizer.
\newblock A stochastic control approach to a robust utility maximization
  problem.
\newblock In {\em Stochastic analysis and applications}, volume~2 of {\em Abel
  Symp.}, pages 125--151. Springer, Berlin, 2007.

\bibitem{BH06}
P.~Briand and Y.~Hu.
\newblock B{SDE} with quadratic growth and unbounded terminal value.
\newblock {\em Probab. Theory Related Fields}, 136(4):604--618, 2006.

\bibitem{BH08}
P.~Briand and Y.~Hu.
\newblock Quadratic {BSDE}s with convex generators and unbounded terminal
  conditions.
\newblock {\em Probab. Theory Related Fields}, 141(3-4):543--567, 2008.

\bibitem{DL06}
F.~Da~Lio and O.~Ley.
\newblock Uniqueness results for second-order {B}ellman-{I}saacs equations
  under quadratic growth assumptions and applications.
\newblock {\em SIAM J. Control Optim.}, 45(1):74--106 (electronic), 2006.

\bibitem{DHB10}
F.~Delbaen, Y.~Hu, and X.~Bao.
\newblock Backward {SDE}s with superquadratic growth.
\newblock {\em Probab. Theory Relat. Fields}, 2010, DOI
  10.1007/s00440-010-0271-1.

\bibitem{DHR09}
F.~Delbaen, Y.~Hu, and A.~Richou.
\newblock On the uniqueness of solutions to quadratic {BSDE}s with convex
  generators and unbounded terminal conditions.
\newblock {\em To appear in Ann. Inst. Henri Poincaré Probab. Stat.}, 2009.

\bibitem{EKHM09}
N.~El~Karoui, S.~Hamad{\`e}ne, and A.~Matoussi.
\newblock {B}ackward stochastic differential equations and applications.
\newblock In R.~Carmona, editor, {\em Indifference Pricing: Theory and
  Applications}, Princeton Series in Financial Engineering, pages 267--320.
  Princeton University Press, 2009.

\bibitem{EKPQ97}
N.~El~Karoui, S.~Peng, and M.~C. Quenez.
\newblock Backward stochastic differential equations in finance.
\newblock {\em Math. Finance}, 7(1):1--71, 1997.

\bibitem{Fr09}
C.~Frei.
\newblock Convergence results for the indifference value based on the stability
  of {BSDE}s.
\newblock {\em Working Paper}, 2009.

\bibitem{FMW11}
C.~Frei, M.~Mocha, and N.~Westray.
\newblock Bsdes in utility maximization with unbounded market price of risk.
\newblock {\em Working Paper}, 2010.

\bibitem{He10}
G.~Heyne.
\newblock Utility maximization under an exponential moments condition.
\newblock {\em Working Paper}, 2010.

\bibitem{HPdR10}
U.~Horst, T.~Pirvu, and G.~dos Reis.
\newblock On securitization, market completion and equilibrium risk transfer.
\newblock {\em Mathematics and Financial Economics}, 2(4):211--252, 2005.

\bibitem{HIM05}
Y.~Hu, P.~Imkeller, and M.~M{\"u}ller.
\newblock Utility maximization in incomplete markets.
\newblock {\em Ann. Appl. Probab.}, 15(3):1691--1712, 2005.

\bibitem{HS09}
Y.~Hu and M.~Schweizer.
\newblock Some new {BSDE} results for an infinite-horizon stochastic control
  problem.
\newblock {\em {FINRISK} Working Paper Series}, 587, 2009.

\bibitem{JS03}
J.~Jacod and A.~N. Shiryaev.
\newblock {\em Limit theorems for stochastic processes}, volume 288 of {\em
  Grundlehren der Mathematischen Wissenschaften [Fundamental Principles of
  Mathematical Sciences]}.
\newblock Springer-Verlag, Berlin, second edition, 2003.

\bibitem{Ka94}
N.~Kazamaki.
\newblock {\em {C}ontinuous {E}xponential {M}artingales and {BMO}}, volume 1579
  of {\em Lecture Notes in Mathematics}.
\newblock Springer-Verlag, Berlin, 1994.

\bibitem{K00}
M.~Kobylanski.
\newblock Backward stochastic differential equations and partial differential
  equations with quadratic growth.
\newblock {\em Ann. Probab.}, 28(2):558--602, 2000.

\bibitem{LSM97}
J.~P. Lepeltier and J.~San~Mart\'{i}n.
\newblock Backward stochastic differential equations with continuous
  coefficient.
\newblock {\em Statist. Probab. Lett.}, 32(4):425--430, 1997.

\bibitem{MS05}
M.~Mania and M.~Schweizer.
\newblock Dynamic exponential utility indifference valuation.
\newblock {\em Ann. Appl. Probab.}, 15(3):2113--2143, 2005.

\bibitem{MT08}
M.~Mania and R.~Tevzadze.
\newblock Backward stochastic partial differential equations related to utility
  maximization and hedging.
\newblock {\em J. Math. Sc.}, 153:291--380, 2008.

\bibitem{MW10a}
M.~Mocha and N.~Westray.
\newblock Sensitivity analysis for the cone constrained utility maximization
  problem.
\newblock {\em Working Paper}, 2010.

\bibitem{Mo09}
M.-A. Morlais.
\newblock Quadratic {BSDE}s driven by a continuous martingale and applications
  to the utility maximization problem.
\newblock {\em Finance Stoch.}, 13(1):121--150, 2009.

\bibitem{Nu209}
M.~Nutz.
\newblock The bellman equation for power utility maximization with
  semimartingales.
\newblock {\em Preprint}, 2009.

\bibitem{PP90}
{\'E}.~Pardoux and S.~G. Peng.
\newblock Adapted solution of a backward stochastic differential equation.
\newblock {\em Systems Control Lett.}, 14(1):55--61, 1990.

\bibitem{Pr05}
P.~E. Protter.
\newblock {\em Stochastic integration and differential equations}, volume~21 of
  {\em Applications of Mathematics (New York)}.
\newblock Springer-Verlag, Berlin, second edition, 2004.
\newblock Stochastic Modelling and Applied Probability.

\bibitem{REK00}
R.~Rouge and N.~El~Karoui.
\newblock Pricing via utility maximization and entropy.
\newblock {\em Math. Finance}, 10(2):259--276, 2000.
\newblock INFORMS Applied Probability Conference (Ulm, 1999).

\bibitem{Se06}
J.~Sekine.
\newblock On exponential hedging and related quadratic backward stochastic
  differential equations.
\newblock {\em Appl. Math. Optim.}, 54:131--158, 2006.

\bibitem{Te08}
R.~Tevzadze.
\newblock Solvability of backward stochastic differential equations with
  quadratic growth.
\newblock {\em Stochastic Process. Appl.}, 118(3):503--515, 2008.

\end{thebibliography}
\bibliographystyle{abbrv}

\end{document}